\DeclareMathOperator{\End}{End}
\DeclareMathOperator{\Ann}{Ann}
\DeclareMathOperator{\ann}{ann}
\DeclareMathOperator{\Dist}{Dist}
\DeclareMathOperator{\gr}{gr}
\DeclareMathOperator{\Lie}{Lie}
\DeclareMathOperator{\Gr}{Gr}
\DeclareMathOperator{\Spec}{Spec}
\DeclareMathOperator{\Ch}{Ch}
\DeclareMathOperator{\op}{op}
\DeclareMathOperator{\ad}{ad}
\begin{document}
\theoremstyle{definition}
\newtheorem*{examps}{Examples}
\newtheorem*{defn}{Definition}
\newtheorem*{thma}{Theorem A}
\newtheorem*{thmb}{Theorem B}
\newtheorem*{thmc}{Theorem C}
\newtheorem*{qna}{Question A}
\newtheorem*{qnb}{Question B}
\theoremstyle{remark}
\newtheorem*{rmk}{Remark}
\newtheorem*{rmks}{Remarks}
\theoremstyle{plain}
\newtheorem*{lem}{Lemma}
\newtheorem*{prop}{Proposition}
\newtheorem*{thm}{Theorem}
\newtheorem*{example}{Example}
\newtheorem*{examples}{Examples}
\newtheorem*{cor}{Corollary}
\newtheorem*{conj}{Conjecture}
\newtheorem*{hyp}{Hypothesis}
\newtheorem*{thrm}{Theorem}
\newtheorem*{quest}{Question}
\theoremstyle{remark}

\newcommand{\Grass}{\mathcal{G}}
\newcommand{\Zp}{{\mathbb{Z}_p}}
\newcommand{\Qp}{{\mathbb{Q}_p}}
\newcommand{\Fp}{{\mathbb{F}_p}}
\newcommand{\Fl}{\mathcal{B}}
\newcommand{\mbx}[1]{\quad\mbox{#1}\quad}
\newcommand{\invlim}{\lim\limits_{\longleftarrow}}
\newcommand{\Uhat}[1]{\widehat{U(\mathfrak{#1})_K}}
\newcommand{\U}[1]{U(\fr{#1})}
\newcommand{\N}{\mathcal{N}}
\renewcommand{\O}{\mathcal{O}}
\newcommand{\Sy}[1]{S(\fr{#1})}
\newcommand{\Syk}[1]{S(\fr{#1}_k)}
\newcommand{\huntGK}{\widehat{\U{g}^{\mb{G}}_{n,K}}}
\newcommand{\hugnK}{\widehat{\U{g}_{n,K}}}
\newcommand{\hugn}{\widehat{\U{g}_n}}
\newcommand{\Uk}[1]{U(\fr{#1}_k)}
\newcommand{\Uik}[2]{\Uk{#1}^{\mb{#2}_k}}
\newcommand{\Sik}[2]{\Syk{#1}^{\mb{#2}_k}}
\newcommand{\h}[1]{\widehat{#1}}
\newcommand{\fr}[1]{\mathfrak{{#1}}}
\newcommand{\mb}[1]{\mathbf{{#1}}}
\newcommand{\st}{\mid}
\newcommand{\be}{\begin{enumerate}[{(}a{)}]}
\newcommand{\ee}{\end{enumerate}}
\let\le=\leqslant  \let\leq=\leqslant
\let\ge=\geqslant  \let\geq=\geqslant

\title[Verma modules for Iwasawa algebras are faithful]{Verma modules for Iwasawa algebras \\ are faithful}
\author{Konstantin Ardakov}
\address{Mathematical Institute, University of Oxford, Oxford OX2 6GG}
\author{Simon Wadsley}
\address{Homerton College, Cambridge, CB2 8PQ}
\begin{abstract} We establish the faithfulness of Verma modules for rational Iwasawa algebras of split semisimple compact $L$-analytic groups. We also prove the algebraic independence of Arens-Michael envelopes over Iwasawa algebras and compute the centre of affinoid enveloping algebras of semisimple $p$-adic Lie algebras.
\end{abstract}
\subjclass[2010]{16S99, 22E50, 16D25}
\dedicatory{To Peter Schneider, on the occasion of his sixtieth birthday}
\maketitle
\vspace{-0.57cm}
\section{Introduction}

\subsection{Prime ideals in Iwasawa algebras} The majority of work so far related to the study of the prime spectrum of non-commutative Iwasawa algebras has produced negative results. By this we mean that results in this area have tended to put constraints on the set of prime ideals for such a ring rather than uncover prime ideals that were not known before: see for example \cite{VenCrelle}, \cite{Wad2007}, \cite{ArdMPCPS}, \cite{AWZ}, \cite{ArdWad2009}, \cite{Ard2012}. This work continues in that tradition. However, most of these theorems were established in characteristic $p$ in the first instance with eventual consequences in characteristic zero; by contrast, our methods here have a definite characteristic zero flavour and our results do not have immediate implications in positive characteristic.

Suppose that $L$ is a finite extension of $\Qp$ and that $K$ is a complete discretely valued field extension of $L$. Let $G$ be a compact open subgroup of the group of $L$-points of a connected, simply connected, split semisimple affine algebraic group scheme $\mb{G}$ defined over $\mathcal{O}_L$, and write $KG$ to denote the Iwasawa algebra of continuous $K$-valued distributions on $G$. The annihilator of every simple $KG$-module that is finite dimensional over $K$ is a prime ideal of finite codimension in $KG$, and moreover every prime ideal with this property will arise in this way. Evidence so far suggests that non-zero prime ideals in $KG$ that do not arise in this way are very scarce; indeed we suspect that when the algebraic group scheme $\mb{G}$ is simple and $G$ has trivial centre they do not exist. We present further evidence in that direction. 

\subsection{The main result} A natural place to look for more prime ideals in $KG$ is as annihilators of simple $KG$-modules that are not finite dimensional over $K$. By standard arguments in ring theory such ideals will always be prime and of infinite codimension in $KG$. Thus if our suspicion above is correct then all such annihilators must be zero. We show that this is the case for a large class of naturally arising examples. More precisely, in $\S \ref{PfA}$ below we prove

\begin{thma} Let $p$ be an odd very good prime for $\mb{G}$ and let $G$ be an open subgroup of $\mb{G}(\mathcal{O}_L)$ with trivial centre. Let $\mb{B}^+$ be a Borel subgroup scheme of $\mb{G}$ and let $K_\theta$ be a $1$-dimensional locally $L$-analytic $K$-representation of $B^+ := G \cap \mb{B}^+(\mathcal{O}_L)$. Then the induced $KG$-module $KG \otimes_{KB^+} K_\theta$ is faithful.
\end{thma}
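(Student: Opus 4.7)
The plan is to transport the faithfulness question from the Iwasawa algebra $KG$ to an affinoid enveloping algebra of $\fr{g}$, where classical tools from the representation theory of semisimple Lie algebras become available. Concretely, let $d\theta\colon \fr{b}^+ \to K$ be the derivative of $\theta$ and form the affinoid Verma modules $\widehat{M}_n := \hugnK \otimes_{U(\fr{b}^+)} K_{d\theta}$. For $n$ sufficiently large, the $KG$-action on $V := KG \otimes_{KB^+} K_\theta$ extends compatibly to a $\hugnK$-action under which $V$ maps naturally into $\widehat{M}_n$, so that any two-sided ideal $I$ of $KG$ annihilating $V$ generates a two-sided ideal $I \cdot \hugnK$ annihilating $\widehat{M}_n$.

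\emph{Central character analysis.} The announced computation of $Z(\hugnK)$ should be a Harish-Chandra-type isomorphism with an affinoid completion of $Z(\U{g})^W$, and the centre acts on $\widehat{M}_n$ by the completed central character $\widehat{\chi}_{d\theta + \rho}$. For regular antidominant $d\theta$, the algebraic Verma module $M(d\theta) := \U{g} \otimes_{U(\fr{b}^+)} K_{d\theta}$ is simple and projective, so $\Ann_{\U{g}} M(d\theta)$ is generated by $\ker \chi_{d\theta + \rho}$; a density argument should then identify $\Ann_{\hugnK} \widehat{M}_n$ with the ideal generated by $\ker \widehat{\chi}_{d\theta + \rho}$. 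The announced algebraic independence of $\Uhat{g}$ over $KG$---which I interpret as saying that $\hugnK$ contains $KG$ and $Z(\hugnK)$ as algebraically independent subalgebras---then implies $KG \cap \hugnK \cdot \ker \widehat{\chi}_{d\theta + \rho} = 0$, closing the argument in the regular antidominant case. The trivial centre hypothesis on $G$ is essential here, as it guarantees that $KG$ itself contributes nothing non-trivial to $Z(\hugnK)$ and makes the algebraic independence a useful statement.

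\emph{Arbitrary locally analytic $\theta$.} A generic $\theta$ has regular antidominant differential, so the previous step handles a large class. For non-generic $d\theta$, a deformation argument should reduce to the generic case: twisting $\theta$ by a locally analytic character of the maximal torus $T \subset G$ produces an analytic family of Verma modules whose annihilators vary upper-semicontinuously in the rigid analytic parameter space of characters, and the set of $\theta$ for which $V$ is faithful is non-empty (by the previous step) and open; irreducibility of the parameter space then forces it to be everything. The odd and very good prime hypotheses on $p$ enter through the centre computation and ensure that the Harish-Chandra isomorphism takes its classical form.

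\emph{Main obstacle.} The essential technical content is the algebraic independence of $\Uhat{g}$ over $KG$---a precise structural description of how $KG$ sits inside the Arens-Michael envelope relative to $Z(\Uhat{g})$, strong enough to transfer annihilator information. I expect the proof of this to occupy a substantial part of the paper, most likely via a careful analysis of associated graded rings under compatible valuation filtrations, combined with a PBW-type theorem for the pair $(KG, Z(\Uhat{g}))$ inside $\Uhat{g}$. A secondary but non-trivial difficulty is establishing that $\Ann_{\hugnK} \widehat{M}_n$ is exactly the ideal generated by $\ker \widehat{\chi}_{d\theta + \rho}$ rather than something larger, which requires an affinoid analogue of the Duflo--Joseph description of primitive ideals.
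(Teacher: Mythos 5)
Your opening moves match the paper: embed the Iwasawa--Verma module densely into an affinoid Verma module $\widehat{V^\lambda}$ over $\hugnK$, and compute $\Ann_{\hugnK}(\widehat{V^\lambda})$ as the ideal generated by $\ker\chi_\lambda$ (the paper does this for \emph{all} $\lambda$ via characteristic varieties and the density of $\mb{G}(k)\cdot(\fr{b}^+_k)^\perp$ in the nilpotent cone, so your regular-antidominant restriction and the subsequent deformation/semicontinuity step are unnecessary as well as unsubstantiated; note also that density of $V$ in $\widehat{V^\lambda}$ only gives the inclusion $\hugnK\ker\chi_\lambda \subseteq \Ann(\widehat{V^\lambda})$, and the reverse inclusion is where the very-good-prime hypothesis and the nilpotent-cone geometry are actually used). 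The genuine gap is your final step: deducing $KG \cap \hugnK\cdot\ker\chi_\lambda = 0$ from the ``algebraic independence'' of Theorem B. That intersection \emph{is} $\Ann_{KG}(\widehat{V^\lambda})$, so this step is not a corollary of anything announced --- it is a reformulation of the theorem you are trying to prove. Theorem B asserts injectivity of the multiplication map $KG\otimes_K U(\fr{g}_K)\to D(G,K)$, i.e.\ that no nontrivial finite relation $\sum\alpha_i t_i=0$ holds with $\alpha_i\in KG$ independent over $K$ and $t_i\in U(\fr{g}_K)$. An element of $\hugnK\ker\chi_\lambda$ is a sum $\sum u_i z_i$ with the $u_i$ in the \emph{completed} algebra, hence a limit of elements of $U(\fr{g}_K)\ker\chi_\lambda$; algebraic independence of subalgebras is not preserved under such completions (already for $G=\Zp$ the closure of $K[\log(1+T)]$ in a suitable Banach completion meets $\O_K[[T]]$ nontrivially), so no direct transfer of annihilator information is available.

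What actually closes this gap in the paper is a separate engine (Theorem 5.3): for a decomposition $\fr{g}=\fr{n}\oplus\fr{h}$ with $\widehat{V^\lambda}$ free of rank one over $\widehat{U(\fr{n})_K}$ and $U(\fr{n}_K)v$ locally finite over $RH$, one writes $\zeta\in RG$ as $\sum_\alpha \mathbf{b}^\alpha\zeta_\alpha$ with $\zeta_\alpha\in RH$ and uses Theorem B \emph{only for the pair} $(KN, U(\fr{n}_K))$ inside $\widehat{U(\fr{n})_K}\cong D_{1/p}(N,K)$ --- where the coefficients genuinely lie in the uncompleted $U(\fr{n}_K)$ because of local finiteness --- to reduce faithfulness over $KG$ to faithfulness of $U(\fr{n}_K)v$ over $RH$. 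This is then bootstrapped: an explicit weight/Tate-algebra computation gives faithfulness over $RT$, the engine gives faithfulness over $RB$, and the Cartan involution together with the $\mb{G}(R)$-invariance of $\Ann_{\hugnK}(\widehat{V^\lambda})$ (this is where your central-character computation really enters) transports this to $RB^+$, whereupon the engine applies once more with $\fr{g}=\fr{n}\oplus\fr{b}^+$. Finally, you omit the reduction from a general open subgroup $G$ to its uniform congruence kernel $G_n$: this needs $KG$ to be a prime ring, which is exactly where the trivial-centre hypothesis is used (it forces finite normal subgroups to be trivial), followed by a Goldie/crossed-product localisation argument --- your attribution of that hypothesis to the algebraic-independence statement is a misreading.
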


We refer the reader to \cite[\S 6.8]{AW13} for a precise definition of what it means for a prime number $p$ to be a \emph{very good prime} for $\mb{G}$ and simply remark here that this condition is satisfied by any $p > 5$ if $\mb{G}$ is not of type $A$. These ``Verma modules" $KG \otimes_{KB^+} K_\theta$ are not always irreducible, but it follows from the work of Orlik and Strauch \cite[Theorem 3.5.2]{OrlikStrauch} that generically they \emph{are} irreducible at least when $L = \Qp$.

We note that Theorem A refutes the main result in \cite{Harris} whose proof has been known to contain a gap for a number of years, and whose statement was already known to be false for open pro-$p$  subgroups $G$ of $SL_2(\Zp)$ following the work of Wei, Zhang and the first author \cite{ArdMPCPS}, \cite{AWZ}, using very different methods to those found in this paper.  

\subsection{Two related results} We also prove some other results of independent interest. Write $D(H,K)$ to denote the algebra of $L$-locally analytic $K$-valued distributions on a compact $L$-analytic group $H$ in the sense of Schneider and Teitelbaum \cite{ST4}. There is a natural map from the Iwasawa algebra $KH$ to $D(H,K)$ because every $L$-locally analytic function on $H$ is continuous. We may also consider the subalgebra $D(H,K)_1$ consisting of those distributions in $D(H,K)$ that are supported at the identity in the sense of \cite{Kohlhaase}. At the end of $\S\ref{AlgInd}$ we prove the following result, which is essential to our proof of Theorem A.

\begin{thmb} The natural map $KH\otimes_K D(H,K)_1\to D(H,K)$ is an injection. \end{thmb}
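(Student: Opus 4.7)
The plan is to reduce to the case of a uniform pro-$p$ group $H$, express both sides in good coordinates, and deduce injectivity from the interplay of the Mahler basis for $KH$ and the PBW basis for $D(H,K)_1$.

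\textbf{Step 1: reduction to the uniform case.} Pick an open normal uniform pro-$p$ subgroup $H_0 \trianglelefteq H$ with coset representatives $g_1, \ldots, g_N$. Then $KH = \bigoplus_i g_i \cdot KH_0$ and $D(H,K) = \bigoplus_i g_i \cdot D(H_0, K)$ as free right modules over $KH_0$ and $D(H_0, K)$ respectively, while $D(H,K)_1 = D(H_0, K)_1$ because being supported at the identity is a local condition. The multiplication map for $H$ thus splits as a direct sum of $N$ left-translates of the analogous map for $H_0$, so we may assume $H$ is uniform pro-$p$.

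\textbf{Step 2: reformulation via PBW expansions.} Fix an ordered topological basis $h_1, \ldots, h_d$ of $H$ and set $x_i := \log h_i \in D(H,K)_1$. By Kohlhaase, $D(H,K)_1 \cong \widehat{U(\mathfrak{h}_K)}$ is the Arens-Michael envelope, so every $b \in D(H,K)_1$ has a unique convergent expansion $b = \sum_\alpha \mu_\alpha x^\alpha$ with $|\mu_\alpha| r^{|\alpha|} \to 0$ for every $r > 0$. Given $\sum_{i=1}^n a_i \otimes b_i$ in the kernel with the $b_i$ chosen $K$-linearly independent (possible after a routine rearrangement), expanding $b_i = \sum_\alpha \mu_{i,\alpha} x^\alpha$ and pulling scalar coefficients past the $a_i \in KH$ yields
$$0 \;=\; \sum_i a_i b_i \;=\; \sum_{\alpha \in \mathbb{N}^d} c_\alpha\, x^\alpha \quad\text{with}\quad c_\alpha \;:=\; \sum_i \mu_{i,\alpha}\, a_i \;\in\; V,$$
where $V := \sum_i K a_i$ is a finite-dimensional $K$-subspace of $KH$. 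It therefore suffices to show that any relation $\sum_\alpha c_\alpha x^\alpha = 0$ in $D(H,K)$ with $c_\alpha$ in a fixed finite-dimensional subspace $V \subseteq KH$ forces all $c_\alpha = 0$: granting this, the equations $\sum_i \mu_{i,\alpha} a_i = 0$, combined with the $K$-linear independence of the $b_i$ (which supplies an invertible $n \times n$ minor of the matrix $(\mu_{i,\alpha})$), force $a_1 = \cdots = a_n = 0$, as required.

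\textbf{Step 3: the key injectivity claim and the main obstacle.} The central task is thus the statement that $V \otimes_K D(H,K)_1 \hookrightarrow D(H,K)$ is injective for every finite-dimensional $V \subseteq KH$. To prove this I would fix a $K$-basis $v_1, \ldots, v_L$ of $V$, write each $v_l = \sum_\gamma t_{l,\gamma} \beta^\gamma$ (bounded Mahler coefficients, with $\beta_i := h_i - 1$), and expand every product $v_l x^\alpha$ in the Mahler basis $\{\beta^\delta\}$ of $D(H,K)$ using the Baker-Campbell-Hausdorff formula. The main obstacle — and the genuine technical content of the theorem — is to verify that the resulting infinite system of equations on the scalar $\beta^\delta$-coefficients of $\sum_{l, \alpha} s_{l, \alpha} v_l x^\alpha$ is triangular in a suitable well-ordering of $\{1, \ldots, L\} \times \mathbb{N}^d$: in leading order the coefficient of $\beta^{\gamma + \alpha}$ coming from $v_l x^\alpha$ is $t_{l, \gamma}$, and the non-commutativity corrections drop strictly in the uniform filtration because commutators in a uniform pro-$p$ group lie in the next filtration step. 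This triangularity forces $s_{l, \alpha} = 0$ for all $(l, \alpha)$, completing the proof. The argument is thus a careful bookkeeping exercise combining Lazard's Mahler-type basis for $KH$ with the Schneider-Teitelbaum description of $D(H,K)$ as a Fr\'echet-Stein algebra on the $\beta^\alpha$.
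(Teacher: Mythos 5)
Your Step 3 is a genuine gap, and it is where all the content of the theorem lives. The ``triangularity'' you assert does not hold: for a fixed target monomial $\beta^\delta$, the contributions to its coefficient in $\sum_{l,\alpha} s_{l,\alpha} v_l x^\alpha$ form a \emph{convolution} $\sum_l \sum_{\gamma+\alpha=\delta} s_{l,\alpha} t_{l,\gamma}$ (plus corrections), with the Mahler expansion of each $v_l$ running over all $\gamma$; there is no well-ordering of $\{1,\dots,L\}\times\mathbb{N}^d$ in which each equation isolates a single unknown $s_{l,\alpha}$ against earlier ones. The failure is already visible in the commutative rank-one case $H=\Zp$: a relation $a_0(T)+a_1(T)\log(1+T)=0$ with $a_i\in \O_K[[T]]\otimes K$ gives, in degree $m$, one equation involving $a_{0,m}$ together with \emph{all} of $a_{1,m-1},\dots,a_{1,0}$, so no bookkeeping of leading coefficients can conclude. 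What is really needed is a mechanism that separates the bounded-coefficient subalgebra $KH$ from the rapidly-decaying-coefficient subalgebra $D(H,K)_1$ inside $D(H,K)$ (classically, for $\Zp$, this is done by evaluating at $p$-power roots of unity, where $\log$ vanishes but bounded power series cannot all vanish). Your sketch invokes the boundedness of the $t_{l,\gamma}$ but never uses it, and the BCH/uniform-filtration remark only addresses non-commutativity corrections, which are not the obstruction. Since you yourself flag Step 3 as ``the genuine technical content,'' the proposal as it stands proves nothing beyond a correct reduction.

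For comparison, the paper's separating mechanism is entirely different and coordinate-free: it makes $D(H,K)$ a module algebra over the Hopf algebra $C^\infty(H,K)$ of locally constant functions (equivalently, an $H$-graded algebra via $D(H,K)_U := D(U,K)$), identifies $D(H,K)_1=\bigcap_U D(U,K)$ as exactly the algebra of invariants, and proves that $KH$ is a \emph{simple} module over the smash product $KH\#C^\infty(H,K)$ with endomorphism ring $K$ (this uses the controller-subgroup theorem of \cite{Ard2012CONM}). The Jacobson density theorem then produces operators $\xi_i$ in the smash product with $\xi_i(\alpha_j)=\delta_{ij}$ for any $K$-linearly independent $\alpha_1,\dots,\alpha_m\in KH$; since these operators commute with right multiplication by invariants, applying them to $\sum_j\alpha_j t_j=0$ kills each $t_j$ separately. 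If you want to salvage your approach you would need to supply an analogous separating device; the roots-of-unity/character-evaluation idea is the commutative shadow of the paper's use of locally constant functions.
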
 

We note that in the case $H=\Zp$ an immediate consequence of Theorem B is the well-known algebraic independence of the logarithmic series $\log(1 + T)$ over the Iwasawa algebra $\O_K[[T]]$, so Theorem B may be viewed as a (slightly stronger) non-commutative analogue of this algebraic independence.

In \cite[\S 9.3]{AW13} we promised a future proof that the centre of the affinoid enveloping algebra $\hugnK$ is the closure of the image of the centre of $U(\mathfrak{g}_K)$ in $\hugnK$:

\begin{thmc} We have $Z\left(\hugnK\right) = \huntGK$.
\end{thmc}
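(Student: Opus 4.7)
The inclusion $\huntGK \subseteq Z(\hugnK)$ is immediate: by Chevalley--Harish-Chandra, $U(\fr{g}_K)^{\mb{G}} = Z(U(\fr{g}_K))$ is central in $U(\fr{g}_K)$, its image remains central in $\hugnK$ by continuity of the commutator, and $Z(\hugnK)$ is closed. The content of the theorem is the reverse inclusion, which I would establish by a symbol / successive-approximation argument.

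First, equip the $\mathcal{O}_L$-form $\hugn$ with its $\pi$-adic filtration. Since $[\pi^n x, \pi^n y] = \pi^{2n}[x,y]$ has $\pi$-adic order at least $n$ larger than either factor, the associated graded ring $\gr\hugn$ is commutative and canonically identified with $\Syk{g}[t]$, where $t$ is the symbol of $\pi$. The commutator on $\hugn$ then endows $\gr\hugn$ with a Poisson bracket equal to $t^n$ times the standard Kirillov--Kostant bracket on $\Syk{g}$; in particular the Poisson centre of $\gr\hugn$ is $\Syk{g}^{\fr{g}_k}[t]$.

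Now let $z \in Z(\hugnK)$. Rescaling by a power of $\pi$ I may assume $z \in \hugn$, and its principal symbol lies in $\Syk{g}^{\fr{g}_k}[t]$. The very-good-prime hypothesis gives $\Syk{g}^{\fr{g}_k} = \Sik{g}{G}$, which by Chevalley's theorem is a polynomial algebra on $\mathrm{rk}(\mb{G})$ homogeneous generators. These generators are reductions mod $\pi$ of integral generators of $S(\fr{g})^{\mb{G}}$, which in turn lift through the symmetrisation map to a free polynomial set of generators for $U(\fr{g})^{\mb{G}}$. I can therefore build $z_0 \in U(\fr{g})^{\mb{G}} \subseteq \huntGK$ with the same principal symbol as $z$, so that $z - z_0$ has strictly deeper $\pi$-adic order. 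Iterating produces a $\pi$-adically convergent series $z_0 + z_1 + \cdots$ whose sum is $z$; since $\huntGK$ is closed in $\hugnK$, this forces $z \in \huntGK$.

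The genuinely delicate step is the symbol matching in the third paragraph: lifts of characteristic-$p$ invariants must not only exist in $U(\fr{g}_K)^{\mb{G}}$ but live integrally in $U(\fr{g})^{\mb{G}}$, with denominators uniform enough that the successive approximations converge in the affinoid (and not merely the generic) topology. This is precisely where the very-good-prime hypothesis earns its keep: it ensures that the mod-$p$ invariant theory of $\Syk{g}$ remains Chevalley-polynomial and that Kostant-style integral lifts of central generators are available degree-by-degree.
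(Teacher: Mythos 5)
Your first paragraph and your closing successive-approximation scheme are sound, but the pivotal claim in the middle --- that the Poisson centre of $\Syk{g}$ is $\Sik{g}{G}[t]$, i.e.\ that $\Syk{g}^{\fr{g}_k} = \Sik{g}{G}$ --- is false in characteristic $p$, and no hypothesis on $p$ rescues it. The Poisson bracket is a derivation in each argument, so $\{f, x^p\} = p\,x^{p-1}\{f,x\} = 0$ for every $x \in \fr{g}_k$: all $p$-th powers in $\Syk{g}$ are Poisson-central, yet $e_\alpha^p$ is certainly not $\mb{G}_k$-invariant. This is the same phenomenon that makes $Z(U(\fr{g}_k))$ much larger than its Harish-Chandra centre in characteristic $p$ (the $p$-centre). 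So knowing only that the symbol of a central element of $\hugnK$ is killed by $\ad(\fr{g}_k)$ leaves open exactly the possibility the theorem is meant to exclude, and your symbol-matching induction cannot get started: there is no element of $U(\fr{g})^{\mb{G}}$ whose symbol is $e_\alpha^p$, so if some central $z$ had such a symbol the approximation would fail at the first step. Infinitesimal invariance does not imply group invariance mod $p$; that implication is the real content of the theorem and must be supplied by another argument.

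The paper closes precisely this gap by upgrading from Lie-algebra invariance to group invariance \emph{before} passing to symbols. For each root $\alpha$ and $r \in R$, the series $\sum_{m \ge 0} \frac{\ad(re_\alpha)^m}{m!}$ converges on $\hugnK$ to the adjoint action of $x_\alpha(r)$ (Proposition \ref{AdjointAction}); since these elements generate $\mb{G}(R)$ by Abe's theorem, any central $z$ is fixed by all of $\mb{G}(R)$ (Corollary \ref{AdjointAction}(b)), hence its symbol is fixed by $\mb{G}(k)$ acting on $\Syk{g}$, and reducedness of $\mb{G}_k$ over an algebraically closed residue field (arranged by an unramified base change) converts invariance under the group of $k$-points into scheme-theoretic invariance, giving $\Gr(Z(\hugnK)) \subseteq \Sik{g}{G}$. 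Combined with the fact that $\Gr(\huntGK)$ maps onto $\Sik{g}{G}$ (from \cite{AW13}, which is where your integral-lifting concerns are genuinely settled), completeness finishes the proof --- that last step is your fourth paragraph, and it is fine. A lesser slip: your assertion that $\gr \hugn$ is commutative for the $\pi$-adic filtration fails when $n=0$, since then $[x,y]$ has the same $\pi$-adic order as its factors; this is why the paper works with the double associated graded ring $\Gr$ rather than the $\pi$-adic graded alone.
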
 

We provide a proof of Theorem C in $\S\ref{ZPi}$ of this paper, which is much simpler than that found in \cite{ArdThesis} for the case $n=0$. 

\subsection{Future work} We believe that this work raises two interesting questions. By an \emph{affinoid highest weight module} we mean a module that can be written in the form $\hugnK\otimes_{U(\fr{g})}M$ for some highest weight $U(\fr{g}_K)$-module $M$. Recall that an ideal that arises as the annihilator of a simple module is called \emph{primitive}. 

\begin{qna} Is it the case that every primitive ideal of $\hugnK$ with $K$-rational infinitesimal central character is the annihilator of a simple affinoid highest weight module? 
\end{qna}
Some evidence pointing towards a positive answer to Question A is provided by Duflo's main theorem in \cite{Duflo} that states that every primitive ideal of the classical enveloping algebra $U(\fr{g}_K)$ with $K$-rational infinitesimal central character is the annihilator of a highest weight module. In particular to answer yes, it would suffice to prove that every primitive ideal of $\hugnK$ is controlled by $U(\fr{g}_K)$. 

\begin{qnb}
Is every affinoid highest weight module that is not finite-dimensional over $K$ faithful as a $KG$-module?
\end{qnb}

Since Verma modules for classical enveloping algebras are generically irreducible, our Theorem \ref{AffKGfaithful} below may be viewed as giving evidence towards a positive answer to Question B. We believe that if we could give positive answers to both these questions then, in the case $L=\Qp$, we would be able to use the faithful flatness of $D(G,K)$ over $KG$ due to Schneider and Teitelbaum \cite{ST} together with our affinoid version \cite[Theorem D]{AW13}  of Quillen's Lemma to prove that every non-zero prime ideal of $KG$ is the annihilator of a finite dimensional simple module. 

\subsection{Acknowledgements}
The first author would like James Zhang for the invitation to visit Seattle in 2012, and for his encouragement that led to the eventual proof of Theorem \ref{AffKGfaithful}.  Many of the results in this paper were established during the Banff 2013 Workshop ``Applications of Iwasawa Algebras"; we thank its organisers for the invitation to visit and for the opportunity to speak about this work there. The second author thanks Homerton College for funding his travel to this workshop.

\section{Generalities on completed group rings}

\subsection{Module algebras and smash products}\label{ModAlg}
Let $k$ be a commutative base ring. Recall \cite[Chapter 4]{Mont} that if $H$ is a Hopf algebra over $k$ and $A$ is a $k$-algebra, then $A$ is a \emph{left $H$-module algebra} if there exists an action
\[ H \otimes_k A \to A, \quad r \otimes a \mapsto r \cdot a\]
such that
\[ r \cdot(ab) = (r_1\cdot a)(r_2\cdot b), \quad r \cdot 1 = \epsilon(r)1, \quad (rs)\cdot a = r\cdot(s \cdot a)\mbx{and} 1\cdot a = a\]
for all $r,s \in H$ and $a,b \in A$. Here we use the sumless Sweedler notation. There is a similar notion of \emph{right} $H$-module algebra, and the two notions coincide in the case when $H$ is commutative.

\noindent Whenever $A$ is a left $H$-module algebra, define $A \# H := A \otimes_k H$ and write $a\# r$ for the tensor $a \otimes r$ in $A \# H$. Then $A \# H$ becomes an associative $k$-algebra called the \emph{smash product} of $A$ with $H$, with multiplication given by 
\[ (a \# r)(b \# s) = a (r_1\cdot b) \# r_2 s \mbx{for all} a,b \in A \mbx{and} r,s \in H.\]
This smash product contains $A$ and $H$ as $k$-subalgebras, and $A$ is naturally a left $A \# H$-module via the rule
\[ (a \# r) \cdot b = a (r \cdot b) \mbx{for all} a, b \in A \mbx{and} r \in H.\]

\noindent Note that the subset of $H$-invariants in $A$, namely
\[ A^H := \{ a \in A : r \cdot a = \epsilon(r)a \mbx{for all} r \in H\}\]
is always a $k$-subalgebra of $A$. We have the following well-known

\begin{lem} Let $H$ be a Hopf algebra over $k$ and let $A$ be a left $H$-module algebra. Then 
\be 
\item $A$ is an $A \# H$---$A^H$-bimodule, and
\item $\End_{A \# H} A = (A^H)^{\op}$.
\ee
\end{lem}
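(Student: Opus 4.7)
The plan is to verify the two claims by direct computation, with the main input being the counit axiom $r_1 \epsilon(r_2) = r = \epsilon(r_1)r_2$ and the definition of $A^H$.

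For part (a), the left $A \# H$-action on $A$ is supplied by the lemma setup and the right $A^H$-action is just right multiplication in $A$. The content is therefore to verify associativity, namely that for any $a,b \in A$, $r \in H$ and $c \in A^H$,
\[ ((a \# r) \cdot b) \cdot c = (a \# r) \cdot (bc). \]
The left-hand side unwinds to $a(r \cdot b)c$. The right-hand side equals $a(r_1 \cdot b)(r_2 \cdot c)$ by the module algebra axiom. Since $c \in A^H$ we have $r_2 \cdot c = \epsilon(r_2) c$, and then sliding the scalar $\epsilon(r_2)$ past and applying the counit axiom $r_1 \epsilon(r_2) = r$ gives $a(r \cdot b)c$, as required.

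For part (b), define $\Phi : (A^H)^{\op} \to \End_{A \# H}(A)$ by sending $c \in A^H$ to the right-multiplication map $R_c : a \mapsto ac$. Part (a) guarantees that $R_c$ is $A \# H$-linear, and $R_c R_{c'} = R_{cc'}$ shows that $\Phi$ is an algebra homomorphism once one accounts for the opposite multiplication. To construct an inverse, send $\phi \in \End_{A \# H}(A)$ to $\phi(1) \in A$. Two things need checking: first, that $\phi(1) \in A^H$, which follows from
\[ r \cdot \phi(1) = \phi((1 \# r) \cdot 1) = \phi(r \cdot 1) = \phi(\epsilon(r) 1) = \epsilon(r) \phi(1), \]
using the $A \# H$-linearity of $\phi$ and the module algebra axiom $r \cdot 1 = \epsilon(r)1$; second, that $\phi = R_{\phi(1)}$, which follows from $\phi(a) = \phi((a \# 1) \cdot 1) = (a \# 1) \cdot \phi(1) = a \phi(1)$.

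The two assignments are mutually inverse, and the only real obstacle (which is mild) is to keep the left/right and opposite-algebra conventions straight in part (b); everything else is a direct application of the axioms recalled in this subsection.
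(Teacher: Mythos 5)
Your proof is correct and follows essentially the same route as the paper: part (a) reduces to the identity $r\cdot(bc)=(r\cdot b)c$ for $c\in A^H$ via the counit axiom, and part (b) uses the same pair of mutually inverse maps $c\mapsto R_c$ and $\phi\mapsto\phi(1)$. The extra care you take with the opposite-algebra bookkeeping is fine but not a substantive difference.
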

\begin{proof} (a) The left regular representation of $A$ on itself commutes with the right regular representation, so we have to check that every $r \in H$ acts on $A$ by a right $A^H$-module endomorphism:
\[ r \cdot(ab) = (r\cdot a)b \mbx{for all} r \in H, a \in A \mbx{and} b \in A^H.\]
Now $r \cdot (ab) = (r_1 \cdot a)(r_2 \cdot b) = (r_1 \cdot a)( \epsilon(r_2) b ) = ( (r_1 \epsilon(r_2)) \cdot a) b$. But
$r_1 \epsilon(r_2) = r$ by the counit axiom in $H$. Therefore $r\cdot (ab) = (r\cdot a)b$ as required.

(b) Let $\varphi : A \to A$ be a left $A \# H$-module endomorphism. Since $\varphi$ is $H$-linear,
\[ r \cdot \varphi(1) = \varphi(r \cdot 1) = \varphi( \epsilon(r)1 ) = \epsilon(r) \cdot \varphi(1) \mbx{for all} r \in H,\]
which shows that $\varphi(1) \in A^H$. Since $\varphi$ is left $A$-linear, 
\[\varphi(x) = \varphi(x1) = x \varphi(1) \mbx{for all} x \in A\]
and therefore $\varphi$ agrees with right muliplication by $\varphi(1) \in A^H$. Hence the anti-homomorphism $A^H \to \End_{A \# H} A$ which sends $r \in A^H$ to right multiplication by $r$ is a bijection.
\end{proof}

\subsection{Locally constant functions}
Let $G$ be a profinite group. Recall that a function $f : G \to k$ is \emph{locally constant} if for all $g \in G$ there is an open neighbourhood $U$ of $g$ such that $f$ is constant on $U$.

\begin{defn} Let $C^\infty = C^\infty(G, k)$ denote the set of all locally constant functions from $G$ to $k$.
\end{defn}

$C^\infty$ becomes a unital commutative $k$-algebra when equipped with pointwise multiplication of functions. Moreover it is a Hopf algebra over $k$, with comultiplication $\Delta$, antipode $S$ and counit $\epsilon$ given by the formulas
\[\Delta(f)(g,h) = f(gh), \quad S(f)(g) = f(g^{-1}) \mbx{and} \epsilon(f) = f(1)\]
for all $f \in C^\infty$ and all $g,h \in G$.

\subsection{$G$-graded algebras}
\label{Ggraded}
We recall \cite[Definition 2.5]{Ard2012CONM} for the convenience of the reader.

\begin{defn} Let $G$ be a profinite group and let $A$ be a $k$-algebra. We say that $A$ is \emph{$G$-graded} if for each clopen subset $U$ of $G$ there exists a $k$-submodule $A_U$ of $A$ such that
\begin{enumerate}[{(}i{)}]
\item $A = A_{U_1} \oplus A_{U_2}\oplus \cdots \oplus A_{U_n}$ if $G = U_1 \cup \cdots \cup U_n$ is an open partition of $G$,
\item $A_U \leq A_V$ if $U \subseteq V$ are clopen subsets of $G$,
\item $A_U \cdot A_V \subseteq A_{UV}$ if $U, V$ are clopen subsets of $G$,
\item $1 \in A_U$ whenever $U$ is an open subgroup of $G$.
\end{enumerate}
\end{defn}

In this situation, \cite[Proposition 2.5]{Ard2012CONM} asserts that for a profinite group $G$, a $k$-algebra $A$ is $G$-graded if and only if $A$ is a $C^\infty$-module algebra.

For every open subgroup $U$ of $G$, let ${^UC^\infty}$ denote the $k$-subalgebra of functions $f \in C^\infty$ that are constant on the left cosets $gU$ of $U$ in $G$. Then 
\[C^\infty = \bigcup\limits_{U \leq_o G} {^UC^\infty}\]
and ${^UC^\infty}$ is even a Hopf subalgebra of $C^\infty$ whenever $U$ is normal, because it is naturally isomorphic to the algebra of $k$-valued functions on the finite group $G/U$ in this case.

\begin{prop} Let $G$ be a profinite group, let $U$ be an open normal subgroup of $G$ and let $A$ be a $G$-graded $k$-algebra. Then the algebra of ${^UC^\infty}$-invariants of $A$ is precisely $A_U$.
\end{prop}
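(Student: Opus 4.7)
My plan is to reduce the problem to a concrete description of the Hopf algebra ${^UC^\infty}$ when $U$ is open normal. In this case $G/U$ is a finite group and restriction identifies ${^UC^\infty}$ with the function algebra $k^{G/U}$. A canonical $k$-basis is given by the characteristic functions $e_{gU}$ of the cosets, which are pairwise orthogonal idempotents summing to $1$, and the counit satisfies $\epsilon(e_{gU}) = \delta_{gU,U}$.

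Because the $e_{gU}$ form a $k$-basis of ${^UC^\infty}$, the condition $a \in A^{{^UC^\infty}}$ reduces to the family of equations $e_{gU} \cdot a = \epsilon(e_{gU}) a$ as $gU$ ranges over $G/U$; equivalently, $e_U \cdot a = a$ and $e_{gU}\cdot a = 0$ for every $g \notin U$. I would next invoke the correspondence \cite[Proposition 2.5]{Ard2012CONM} between $G$-gradings and $C^\infty$-module algebra structures, together with grading axiom (i) applied to the open partition $G = \bigsqcup_{gU \in G/U} gU$, to get the internal direct sum decomposition $A = \bigoplus_{gU \in G/U} A_{gU}$ and write any $a \in A$ uniquely as $a = \sum_{gU} a_{gU}$ with $a_{gU} \in A_{gU}$.

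The heart of the argument is to check that under this correspondence each idempotent $e_{hU}$ acts on $A$ as the projection onto the summand $A_{hU}$, i.e.\ $e_{hU}\cdot a = a_{hU}$. Granting this, the equivalent conditions above force $a_U = a$ and $a_{gU} = 0$ for $gU \ne U$, which is precisely $a \in A_U$; conversely any $a \in A_U$ visibly satisfies them, completing the proof.

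The step I expect to be the main obstacle is the identification of $e_V$ with the projector onto $A_V$ for clopen $V$. This is really the content of the equivalence in \cite[Proposition 2.5]{Ard2012CONM}: given a $G$-grading, one defines $f\cdot a := \sum_i c_i a_{U_i}$ whenever $f \in C^\infty$ takes constant value $c_i$ on the blocks $U_i$ of a partition refining both the supports of $f$ and a grading decomposition of $a$. Applied to $f = e_{hU}$ and the partition by cosets of $U$, this yields $e_{hU} \cdot a = a_{hU}$ directly. So the proof reduces to unwinding a definition, and once the projector description is in hand the remainder is bookkeeping.
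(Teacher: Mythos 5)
Your proposal is correct and follows essentially the same route as the paper: both identify ${^UC^\infty}$ with the span of the coset characteristic functions, use the fact (from the construction in \cite[Proposition 2.5]{Ard2012CONM}) that these idempotents act as the projectors onto the summands $A_{gU}$ of the decomposition $A=\bigoplus_{gU}A_{gU}$, and then compare with the counit values to get both inclusions. The only cosmetic difference is that the paper phrases the key input as $A_{gU}=\delta_{gU}\cdot A$ and packages the easy inclusion via the identity $(\delta_{gU}-\epsilon(\delta_{gU}))\delta_U=0$, whereas you state the projector property directly.
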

\begin{proof} The Hopf algebra ${^UC^\infty}$ is spanned by the characteristic functions $\delta_{gU}$ of all the cosets of $U$ in $G$. By the construction given in the proof of \cite[Proposition 2.5]{Ard2012CONM}, the action of ${^UC^\infty}$ on $A$ and the $G$-graded structure are related by 
\[A_{gU} = \delta_{gU}\cdot A \mbx{for all} g \in G.\] 
Now the $\delta_{gU}$ form a family of commuting idempotents in ${^UC^\infty}$, and 
\[\epsilon(\delta_{gU}) = \delta_{gU}(1) = \left\{ \begin{array}{l} 1 \mbx{if} g \in U \\ 0 \mbx{if} g \notin U\end{array}\right.\]
so that $( \delta_{gU} - \epsilon(\delta_{gU}) ) \delta_U = 0$ for all $g \in G$. This implies that
\[ A_U \subseteq A^{{^UC^\infty}}.\]
On the other hand, let $\mu \in A$ be ${^UC^\infty}$-invariant. Then
\[\mu = (\delta_U \cdot \mu) + (1 - \delta_U) \cdot \mu = (\delta_U \cdot \mu) + \epsilon(1 - \delta_U) \mu = \delta_U \cdot \mu \in A_U\]
as required.
\end{proof}

\begin{cor} The algebra of $C^\infty$-invariants in $A$ is $\bigcap_{U \leq_o G} A_U$.
\end{cor}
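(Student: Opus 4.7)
The plan is to derive this directly from the Proposition by a cofinality argument through the open normal subgroups of $G$. Since $G$ is profinite, every open subgroup of $G$ contains an open normal subgroup of $G$. Combined with the identity $C^\infty = \bigcup_{U \leq_o G} {^UC^\infty}$ noted above, this shows that $C^\infty$ is already the union of the Hopf subalgebras ${^NC^\infty}$ as $N$ ranges over the \emph{open normal} subgroups of $G$.

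It follows that an element $a \in A$ is $C^\infty$-invariant if and only if it is ${^NC^\infty}$-invariant for every open normal subgroup $N$ of $G$. The Proposition identifies the algebra of such invariants with $A_N$, giving
\[
A^{C^\infty} = \bigcap_{N} A_N,
\]
where the intersection is taken over all open normal subgroups $N$ of $G$.

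It then remains to verify that this intersection coincides with $\bigcap_{U \leq_o G} A_U$. One inclusion is immediate, since the latter intersection is taken over a larger family. For the reverse, given any open subgroup $U \leq_o G$, profiniteness of $G$ provides an open normal subgroup $N$ contained in $U$; condition (ii) in the definition of a $G$-graded algebra then gives $A_N \subseteq A_U$, so any element of $\bigcap_N A_N$ automatically lies in $A_U$. There is no substantial obstacle here: the Corollary is a routine cofinality consequence of the Proposition, the only point requiring care being that the Proposition was stated only for open \emph{normal} subgroups, while the Corollary is phrased in terms of arbitrary open subgroups.
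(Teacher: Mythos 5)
Your proof is correct and is exactly the routine cofinality argument the paper intends (it leaves the Corollary unproved as an immediate consequence of the Proposition together with $C^\infty = \bigcup_{U \leq_o G} {^UC^\infty}$). The point you flag — passing between arbitrary open subgroups and open normal ones via the core and condition (ii) — is handled correctly.
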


\subsection{Completed group rings}\label{CompGpRng}
Let $k G$ denote the \emph{completed group ring} of $G$ with coefficients in $k$:
\[ k G := \invlim k[G/U]\]
where the inverse limit is taken over all the open normal subgroups $U$ of $G$. 

\begin{lem} Let $G$ be a profinite group and let $S \subseteq k$ be a multiplicatively closed subset consisting of non zero-divisors.
\be
\item $kG$ is a $G$-graded $k$-algebra.
\item The algebra of $C^\infty$-invariants in $kG$ is $k$.
\item The central localisation $S^{-1} kG$ of $kG$ is a $G$-graded $S^{-1}k$-algebra.
\item The algebra of $C^\infty$-invariants in $S^{-1} kG$ is $S^{-1}k$.
\ee
\end{lem}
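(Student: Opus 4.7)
The plan is to construct the $G$-grading on $kG$ explicitly using the structure of clopen subsets of a profinite group, then derive (b), (c) and (d) from (a) by general nonsense combined with one saturation lemma for localisation.

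For part (a), every clopen subset $V$ of $G$ is a finite disjoint union of left cosets of some open normal subgroup $U_0 \leq_o G$. Fixing such a decomposition $V = \bigsqcup_{i=1}^n g_i U_0$, I would define
\[(kG)_V := \bigoplus_{i=1}^n g_i \cdot kU_0 \subseteq kG,\]
where $kU_0 \hookrightarrow kG$ is the natural embedding induced by $U_0 \hookrightarrow G$. The first check is independence of the chosen $U_0$: passing to a smaller open normal subgroup $U_1 \leq U_0$ refines the partition into cosets of $U_1$ compatibly, because $kU_0 = \bigoplus_j h_j \cdot kU_1$ for any set of coset representatives $h_j$ of $U_1$ in $U_0$. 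The four axioms (i)--(iv) then reduce to routine calculations: for (i) one refines the given partition $G = U_1 \cup \cdots \cup U_n$ by cosets of a sufficiently small open normal subgroup $W$ and uses $k[G/W] = \bigoplus_i k\langle W\text{-cosets in } U_i \rangle$, while (iii) reduces to the set-theoretic inclusion $(g_i U)(h_j U) \subseteq g_i h_j U$ for $U$ open normal.

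For part (b), the Corollary identifies the $C^\infty$-invariants with $\bigcap_{U \leq_o G}(kG)_U$; restricting to the cofinal family of open normal subgroups and observing that $(kG)_U = kU$ under the natural inclusion when $U$ is open normal, this intersection equals $\bigcap kU$ as $U$ ranges over open normal subgroups of $G$. If $\mu$ lies in this intersection, then for every open normal subgroup $W$ the condition $\mu \in kW$ forces the image $\mu_W$ of $\mu$ in $k[G/W]$ to equal $c_W \cdot 1$ for some scalar $c_W \in k$, where $1$ denotes the identity coset; compatibility of the $\mu_W$ across the transition maps of the inverse limit forces the $c_W$ to agree, so $\mu$ is itself a scalar. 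The reverse inclusion is trivial. Part (c) is then immediate: I would set $(S^{-1}kG)_V := S^{-1}(kG)_V$, and each of the axioms (i)--(iv) follows from (a) because localisation is exact, preserves direct sums, and sends $1$ to $1$.

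The main work for (d) is a saturation lemma: if $\mu \in kG$ and $s\mu \in (kG)_U$ for some $s \in S$ and clopen $U \subseteq G$, then already $\mu \in (kG)_U$. To prove this I would apply axiom (i) to the open partition $G = U \sqcup (G \setminus U)$, write $\mu = \mu_U + \mu_{G \setminus U}$, deduce $s\mu_{G\setminus U} = 0$ from uniqueness of this decomposition, and appeal to the fact that multiplication by any element of $S$ is injective on $kG$ (because each $k[G/W]$ is a free $k$-module and injectivity passes to inverse limits). Given $\mu \in \bigcap_U S^{-1}(kG)_U$ inside $S^{-1}kG$, writing $\mu = s^{-1}\nu$ with $\nu \in kG$ and clearing denominators shows $s'\nu \in (kG)_U$ for some $s' \in S$ depending on $U$; the saturation lemma then gives $\nu \in \bigcap_U (kG)_U = k$ by (b), so $\mu \in S^{-1}k$ as required. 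I expect the main obstacle to lie in part (a), specifically in the bookkeeping needed to verify that $(kG)_V$ and the multiplicativity axiom (iii) are independent of the various choices of open normal subgroups one makes when presenting clopen sets as disjoint unions of cosets.
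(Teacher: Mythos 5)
Your proposal is correct and follows essentially the same route as the paper: the paper cites \cite[Lemma 2.9]{Ard2012CONM} for part (a) where you construct the grading by hand, proves (b) via the same inverse-limit/coset-support observation (in contrapositive form), transports the grading through the localisation for (c) exactly as you do, and for (d) establishes the same saturation statement $S^{-1}kU \cap kG = kU$ using the idempotents $\delta_U$ and $1-\delta_U$, which give precisely your direct-sum decomposition $kG = (kG)_U \oplus (kG)_{G\setminus U}$. The only cosmetic difference is that the paper lets the complementary component live in $S^{-1}kG$, where $s$ is a unit, instead of invoking injectivity of multiplication by $s$ on $kG$ as you do; both are valid.
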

\begin{proof} (a) This follows from \cite[Lemma 2.9]{Ard2012CONM}.

(b) For any open normal subgroup $U$ of $G$ let $\epsilon_U : kG \twoheadrightarrow k[G/U]$ be the natural surjection. Let $x \in kG \backslash k$; then by the definition of inverse limit, we can find some open subgroup $U$ of $G$ such that the $\epsilon_U(x) \notin \epsilon_U(k)$. But $\epsilon_U(k) = \epsilon_U(kU)$ so $x \notin kU$. Hence $\bigcap_{U \leq_o G} kU = k$ and we may apply Corollary \ref{Ggraded}.

(c) By (a) we can find we family $((kG)_U)$ (for $U$ ranging over the clopen subsets of $G$) of $k$-submodules of $kG$ satisfying the conditions of the definition of a $G$-graded $k$-algebra. Then the family $(S^{-1}(kG)_U)$ gives $S^{-1}kG$ the structure of a $G$-graded $k$-algebra.

(d) In view of part (b), it is enough to prove that $S^{-1} kU \cap kG = kU$ for every open normal subgroup $U$ of $G$. Suppose that $s^{-1}x \in kG$ for some $s \in S$ and $x \in kU$. Let $y = \delta_U \cdot (s^{-1}x) \in kU$ and $z = (1 - \delta_U) \cdot s^{-1}x \in (1 - \delta_U) \cdot kG$ so that $s^{-1}x = y + z$. Then $x - sy \in kU$ and $sz \in (1 - \delta_U) \cdot kG$, so
\[ x - sy = sz \in \delta_U \cdot kG \quad \cap \quad (1 - \delta_U) \cdot kG = 0\]
and therefore $s^{-1}x = y \in kU$ as required. \end{proof}

\section{The multiplication map $KG \otimes_K D(G,K)_1 \to D(G,K)$ is injective}\label{AlgInd}

\subsection{Compact $p$-adic analytic groups}\label{SimpleKG} Now let $G$ be a compact $p$-adic analytic group and let $R$ be a complete discrete valuation ring of characteristic zero with a residue field $k$ of characteristic $p$. Fix a uniformiser $\pi \in R$ and let $K$ be the field of fractions of $R$. We define the \emph{algebra of continuous $K$-valued distributions} of $G$ to be the central localisation
\[ KG := K \otimes_R RG.\]
of the completed group ring $RG$. In this situation we may naturally form three smash product algebras following $\S \ref{ModAlg}$:
\begin{itemize}
\item $\mathcal{A}_k = kG \# C^\infty(G,k)$,
\item $\mathcal{A}_R = RG \# C^\infty(G,R)$, and
\item $\mathcal{A}_K = KG \# C^\infty(G,K)$.
\end{itemize}
Then $\mathcal{A}_T$ naturally acts on $TG$ for all $T \in \{K,R,k\}$ by Lemma \ref{CompGpRng}.
\begin{thm} 
\be \item $kG$ is a simple $\mathcal{A}_k$-module.
\item $KG$ is a simple $\mathcal{A}_K$-module.
\ee
\end{thm}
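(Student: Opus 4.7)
The plan for (a) is to combine the $G$-grading on $kG$ with the fact that $kU$ is a local ring for any open normal pro-$p$ subgroup $U$ of $G$ (since $k$ has characteristic $p$). Let $M$ be a nonzero $\mathcal{A}_k$-submodule of $kG$, and choose $0 \neq x \in M$ together with an open normal pro-$p$ subgroup $U \trianglelefteq G$ with $\epsilon_U(x) \neq 0$ in $k[G/U]$; such a $U$ exists because $G$ is compact $p$-adic analytic. Write $\epsilon_U(x) = \sum_{gU} c_{gU}\,\bar g$ and pick a coset with $c_{gU} \neq 0$. Applying the idempotent $\delta_{gU} \in C^\infty(G,k) \subseteq \mathcal{A}_k$ produces $y := \delta_{gU}\cdot x \in M \cap (kG)_{gU}$, and left-multiplication by $g^{-1} \in kG$ yields $z := g^{-1} y \in M \cap (kG)_U = M \cap kU$ with $\epsilon_U(z) = c_{gU} \neq 0$ in $k$. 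Since $z$ has nonzero augmentation in the local ring $kU$, it is a unit there, so $1 \in kU\cdot z \subseteq M$ and therefore $M = kG$.

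For (b), the plan is to reduce modulo $\pi$, apply (a), and then lift using that every element of $1 + \pi RG$ is a unit in $RG$. Let $M$ be a nonzero $\mathcal{A}_K$-submodule of $KG$. Scaling any nonzero element of $M$ by an appropriate power of $\pi$ gives a nonzero intersection $M \cap RG$ not contained in $\pi RG$; this intersection is an $\mathcal{A}_R$-submodule of $RG$ because $\mathcal{A}_R$ sits naturally inside $\mathcal{A}_K$ and preserves $RG$. Its image in $kG = RG/\pi RG$ is then a nonzero $\mathcal{A}_k$-submodule of $kG$, which equals $kG$ by (a). Consequently $M \cap RG + \pi RG = RG$, producing an element $m = 1 - \pi r \in M \cap RG$ for some $r \in RG$. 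Because $\pi$ lies in the Jacobson radical of each finite $R$-algebra $R[G/V]$, the image of $m$ in every $R[G/V]$ is invertible; these inverses are compatible under the transition maps by uniqueness of inverses, so they assemble to an inverse $m^{-1} \in RG = \varprojlim R[G/V]$. Hence $1 = m^{-1} m \in RG \cdot M \subseteq M$, and $M = KG$.

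The main subtlety is the final step of (b): reducing mod $\pi$ on its own only shows that $M \cap RG$ is dense in $RG$ in the $\pi$-adic topology, which does not by itself imply $1 \in M$. What makes the argument work is the stronger observation that each finite quotient $R[G/V]$ is semilocal with $\pi$ in its Jacobson radical, so any element of $1 + \pi RG$ is automatically a unit in $RG$; this converts the mod $\pi$ surjectivity into an honest unit inside $M$. Part (a) and the rest of (b) are essentially consequences of the $G$-graded invariants formalism already developed in the previous subsections.
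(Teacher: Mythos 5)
Your proof is correct, but part (a) takes a genuinely different route from the paper. The paper deduces simplicity of $kG$ over $\mathcal{A}_k$ from the controller-subgroup machinery of \cite{Ard2012CONM}: an $\mathcal{A}_k$-submodule is a left ideal $I$ stable under $C^\infty(G,k)$, hence controlled by every open subgroup, hence (by the left-ideal version of \cite[Theorem A]{Ard2012CONM}) generated by $I\cap k$, so $I=0$ or $I=kG$. You instead argue directly: project a nonzero element onto a single coset component using the idempotent $\delta_{gU}$, translate it into $(kG)_U = kU$ by left multiplication by $g^{-1}$, and observe that the resulting element has nonzero augmentation and is therefore a unit in the local ring $kU$ (here you use that $G$, being compact $p$-adic analytic, has an open normal pro-$p$ subgroup $U$, and that $\mathrm{char}\,k = p$). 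This is more elementary and self-contained, at the price of being specific to the pro-$p$/characteristic-$p$ situation, whereas the paper's argument leans on a much stronger imported theorem; both are valid in the setting at hand. For part (b) your argument is essentially the paper's (reduce mod $\pi$, apply (a), lift), except that you normalise the nonzero element so that its reduction is nonzero from the outset (avoiding the paper's second case $I\cap RG\subseteq \pi RG$) and you make the lifting step explicit by producing an element $1-\pi r\in M\cap RG$ and inverting it via the Jacobson radical of the finite quotients $R[G/V]$; this is a slightly more careful justification than the paper's one-line appeal to $\pi$-adic completeness, and it is sound.
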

\begin{proof} (a) Since $\mathcal{A}_k$ is generated by $kG$ and $C^\infty(G,k)$, an $\mathcal{A}_k$-submodule of $kG$ is just a left ideal $I$ of $kG$ such that 
\[ C^\infty(G,k) \cdot I \subseteq I.\]
By \cite[Definition 2.6]{Ard2012CONM}, we see that every open subgroup $U$ of $G$ controls $I$. Hence $I^\chi$, the controller subgroup of $I$, is trivial. Now \cite[Theorem A]{Ard2012CONM} is also valid for left ideals, and in our situation this implies that the left ideal $I$ is generated as a left ideal by its intersection with the ground field $k$. Therefore $I = 0$ or $I = kG$.

(b) Let $I$ be a $\mathcal{A}_K$-submodule of $KG$. Then $I \cap RG$ is a $\mathcal{A}_R$-submodule of $RG$ and $((I \cap RG) + \pi RG) / \pi RG$ is a $\mathcal{A}_k$-submodule of $kG$. By part (a), we see that either $(I \cap RG) + \pi RG = RG$ or $(I \cap RG) + \pi RG = \pi RG$. In the first case, the $\pi$-adic completeness of $RG$ implies that $I \cap RG = RG$ and in the second case, $I \cap RG \subseteq \pi RG$. Thus in the first case $I=KG$. In the second case, since $I = \pi I$, an easy induction shows that $I \cap RG \subseteq \pi^n RG$ for all $n \geq 0$ and so $I \cap RG = 0$, therefore $I = 0$.
\end{proof}

\subsection{Theorem}\label{LinIndep}
Let $G$ be a compact $p$-adic analytic group and let $KG \to \mathcal{D}$ be a homomorphism of $C^\infty(G,K)$-module algebras. Let $\mathcal{D}_1$ denote the algebra of $C^\infty(G,K)$-invariants in $\mathcal{D}$. Then the multiplication map
\[ KG \otimes_K \mathcal{D}_1 \longrightarrow \mathcal{D} \]
is injective.
\begin{proof} Let $\alpha_1, \cdots, \alpha_m \in KG$ be linearly independent over $K$ and let $t_1, \ldots, t_m \in \mathcal{D}_1$ be given such that
\[ \alpha_1 t_1 + \cdots + \alpha_m t_m = 0\]
inside $\mathcal{D}$. The $\mathcal{A}_K$-module $KG$ is simple by Theorem \ref{SimpleKG}(b) and its endomorphism ring $\End_{\mathcal{A}_K}(KG)$ seen to be $K$ by Lemma \ref{ModAlg}(b) and Lemma \ref{CompGpRng}(d). It follows that the $\alpha_i$ are linearly independent over $\End_{\mathcal{A}_K}(KG)$, and so using the Jacobson Density Theorem we can find $\xi_1,\ldots,\xi_m \in \mathcal{A}_K$ such that
\[ \xi_i(\alpha_j) = \delta_{ij} \mbx{for all} j=1,\ldots,m.\]
Now $\mathcal{D}$ is a left $KG$-module by left multiplication, and this action commutes with right multiplication by $\mathcal{D}_1$. Consequently, $\mathcal{D}$ is an $\mathcal{A}_K$--- $\mathcal{D}_1$-bimodule. Therefore
\[ 0 = \xi_i \cdot \left( \sum_{j=1}^m \alpha_j t_j \right) = \sum_{j=1}^m (\xi_i \cdot \alpha_j) t_j = \sum_{j=1}^m \delta_{ij} t_j = t_i\]
for all $i = 1, \ldots, m$.
\end{proof}
\subsection{Locally analytic distribution algebras}
\label{LocAn}
Now suppose that $L$ is a finite extension of $\Qp$ that is contained in $K$ and let $M$ be a locally $L$-analytic manifold. The \emph{space of $K$-valued $L$-analytic distributions} $D(M,K)$ on $M$ is the strong dual $C^{an}(M,K)'_b$ of the space $C^{an}(M,K)$ of $K$-valued locally $L$-analytic functions on $M$ --- see \cite[\S 2]{ST4}.

When $G$ is a locally $L$-analytic group, multiplication in the group $G$ induces a structure of a unital associative $K$-algebra on $D(G,K)$ \cite[Proposition 2.3]{ST4}. This algebra is called the \emph{algebra of $K$-valued locally $L$-analytic distributions} on $G$.

\begin{lem} $D(G,K)$ is a $G$-graded $K$-algebra, whenever $G$ is compact.
\end{lem}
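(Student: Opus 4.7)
The plan is to apply \cite[Proposition 2.5]{Ard2012CONM}, which identifies $G$-graded $K$-algebra structures on an algebra $A$ with left $C^\infty(G,K)$-module algebra structures on $A$. So I equip $D(G,K)$ with a $C^\infty(G,K)$-module algebra structure, dual to the inclusion $C^\infty(G,K) \hookrightarrow C^{an}(G,K)$: given $f \in C^\infty(G,K)$ and $\mu \in D(G,K)$, set
\[
(f \cdot \mu)(\phi) := \mu(f\phi) \mbx{for} \phi \in C^{an}(G,K).
\]
This is well-defined because the pointwise product of a locally constant function with a locally $L$-analytic function is again locally $L$-analytic, and because pointwise multiplication by $f$ is a continuous operator on $C^{an}(G,K)$. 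The elementary axioms $(fg)\cdot\mu = f\cdot(g\cdot\mu)$ and $1 \cdot \mu = \mu$ are immediate from the associative, commutative algebra structure on $C^\infty(G,K)$.

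Next I verify the two module-algebra compatibilities. The unit of $D(G,K)$ is the Dirac distribution $\delta_e$ at the identity of $G$, and a direct computation yields $(f \cdot \delta_e)(\phi) = f(e)\phi(e) = \epsilon(f) \delta_e(\phi)$. For compatibility with convolution I first observe that, since $G$ is compact, the natural map $C^\infty(G,K) \otimes_K C^\infty(G,K) \to C^\infty(G \times G, K)$ is an isomorphism, both sides being direct limits over open normal subgroups $N$ of $G$ of $K^{G/N} \otimes_K K^{G/N} \cong K^{G/N \times G/N}$. Hence the comultiplication $\Delta(f)(g,h) := f(gh)$ takes values in the algebraic tensor product, and I may write $\Delta(f) = \sum f_1 \otimes f_2$ as a finite sum. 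Unwinding the definition of convolution from \cite[Proposition 2.3]{ST4}, namely $(\mu * \nu)(\phi) = \mu(g \mapsto \nu(h \mapsto \phi(gh)))$, and substituting $f\phi$ for $\phi$ gives
\[
(f \cdot (\mu * \nu))(\phi) = \mu\bigl(g \mapsto \sum f_1(g) \, \nu(h \mapsto f_2(h) \phi(gh))\bigr) = \sum \bigl((f_1 \cdot \mu) * (f_2 \cdot \nu)\bigr)(\phi),
\]
which is the desired Sweedler identity $f \cdot (\mu\nu) = \sum (f_1 \cdot \mu)(f_2 \cdot \nu)$.

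The main work is bookkeeping rather than analysis: confirming that $\Delta(f)$ actually lands in the algebraic (not completed) tensor square of $C^\infty(G,K)$ and that pointwise multiplication by $f$ preserves local analyticity and is continuous. Both reduce to the observation that a locally constant function on $G$ factors through some finite quotient $G/N$, so no genuine analytic issue appears. As a sanity check, the resulting grading is the expected geometric one: for clopen $U \subseteq G$ the characteristic function $\delta_U$ is locally constant, so the topological direct sum decomposition $C^{an}(G,K) \cong C^{an}(U,K) \oplus C^{an}(G \setminus U, K)$ dualises to the decomposition $D(G,K) = D(G,K)_U \oplus D(G,K)_{G\setminus U}$, in which $D(G,K)_U = \delta_U \cdot D(G,K)$ is exactly the space of distributions supported on $U$.
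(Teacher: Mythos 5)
Your proof is correct, but it reaches the conclusion by travelling through the equivalence of \cite[Proposition 2.5]{Ard2012CONM} in the opposite direction to the paper. The paper defines the grading directly by $D(G,K)_U := D(U,K)$ for clopen $U$ and checks the four axioms of the definition in $\S$\ref{Ggraded}, the only non-trivial ones being (i) and (iii), which it gets from the decomposition $D(G,K) = \bigoplus_{g \in G/H} \delta_g \ast D(H,K)$ recorded in the proof of \cite[Lemma 3.1]{ST4}; the $C^\infty$-module algebra structure is then deduced abstractly in part (a) of the subsequent Corollary. You instead build the $C^\infty(G,K)$-action explicitly as the transpose of pointwise multiplication by locally constant functions on $C^{an}(G,K)$, verify the Hopf module-algebra axioms by hand (the key points being that $\Delta(f)$ of a locally constant $f$ lies in the algebraic tensor square, and the Fubini-type formula for convolution from \cite[Proposition 2.3]{ST4}), and then invoke the same equivalence to obtain the grading. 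Both arguments ultimately rest on the same fact that clopen subsets give topological direct summands of $C^{an}(G,K)$ and hence of $D(G,K)$. What your route buys is an explicit description of the action, which makes part (a) of the Corollary immediate and makes the identification of the invariant subalgebra $D(G,K)_1$ transparent; what it costs is that you must check the Sweedler compatibility with convolution, which the paper avoids. Your closing observation that $\delta_U \cdot D(G,K)$ is the space of distributions supported on $U$ confirms that your grading coincides with the paper's choice $D(G,K)_U = D(U,K)$, so the two proofs produce the same $G$-graded structure.
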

\begin{proof} Since $G$ is a locally $L$-analytic group, every clopen subset $U$ of $G$ is a locally $L$-analytic manifold, so we may set
\[ D(G,K)_U := D(U,K).\]
With this definition, parts (ii) and (iv) of Definition \ref{Ggraded} are clear. We may assume that all clopen subsets featuring in the statement of parts (i) and (iii) of the Definition are finite unions of cosets of a fixed open normal subgroup $H$ of $G$. For each $g \in G$ let $\delta_g \in D(G,K)$ be the Dirac distribution. It was observed in the proof of \cite[Lemma 3.1]{ST4} that
\[ D(G,K) = \bigoplus_{g \in G/H} \delta_g \ast D(H,K).\]
Part (i) follows immediately, and part (iii) follows since $D(H,K)$ is a subalgebra of $G$ which is stable under conjugation by each $\delta_g$ inside $D(G,K)$.
\end{proof}

\begin{cor} Let $G$ be a compact $L$-analytic group. Then
\be
 \item $D(G,K)$ is a $C^\infty$-module algebra. 
 \item The algebra of invariants under this action is precisely 
 \[D(G,K)_1 := \bigcap_{H \leq_o G} D(H,K).\]
 \item The natural map $KG \otimes_K D(G,K)_1 \to D(G,K)$ is injective.
 \item Let $\fr{g}_K := K \otimes_{L} \mathcal{L}(G)$. Then the natural map 
 \[KG \otimes_K U(\fr{g}_K) \to D(G,K)\]
 is also injective.
\ee 
\end{cor}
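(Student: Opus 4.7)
Plan: Parts (a) and (b) are essentially bookkeeping based on the preceding material. For (a), I would combine the Lemma of \S\ref{LocAn} (that $D(G,K)$ is $G$-graded) with \cite[Proposition 2.5]{Ard2012CONM}, which translates $G$-graded structures into $C^\infty$-module algebra structures. For (b), I would apply the Corollary of \S\ref{Ggraded}: the $C^\infty$-invariants are $\bigcap_{H \leq_o G} D(G,K)_H$, which coincides with $\bigcap_{H \leq_o G} D(H,K) = D(G,K)_1$ using the identification $D(G,K)_U = D(U,K)$ from the proof of the Lemma of \S\ref{LocAn}.

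Part (c) is where the real content lies, and I would prove it by a direct application of Theorem \ref{LinIndep} with $\mathcal{D} = D(G,K)$ and $\mathcal{D}_1 = D(G,K)_1$ (as identified in (b)). The one hypothesis that needs checking is that the natural map $KG \to D(G,K)$ is a homomorphism of $C^\infty(G,K)$-module algebras; equivalently, that it preserves the $G$-gradings. Unwinding definitions, $(KG)_U$ consists of those elements of the completed group ring ``supported on $U$'', and these map to elements of $D(U,K) = D(G,K)_U$, so the map is graded. Theorem \ref{LinIndep} then delivers the required injection.

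For part (d), the plan is to reduce to (c) by checking $U(\mathfrak{g}_K) \subseteq D(G,K)_1$. A compact $p$-adic analytic group is profinite, so $\bigcap_{H \leq_o G} H = \{1\}$; consequently an element of $D(G,K)_1$ is precisely a distribution supported at the identity. The image of $x \in \mathfrak{g}_K$ in $D(G,K)$ is the distribution $f \mapsto (\tilde{x} f)(1)$, with $\tilde{x}$ the corresponding left-invariant vector field, and this is manifestly supported at $1$; hence the whole subalgebra $U(\mathfrak{g}_K)$ it generates sits inside $D(G,K)_1$. Since $K$ is a field, tensoring preserves injections, so the composite $KG \otimes_K U(\mathfrak{g}_K) \hookrightarrow KG \otimes_K D(G,K)_1 \hookrightarrow D(G,K)$ is injective.

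The only step that might be considered an obstacle is the verification that the natural map $KG \to D(G,K)$ is graded; but this is really just a matter of carefully tracing what ``supported on $U$'' means for both Iwasawa measures and locally analytic distributions, and the coset decomposition in the proof of the Lemma of \S\ref{LocAn} makes this transparent.
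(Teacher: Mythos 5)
Your proposal is correct and follows essentially the same route as the paper: (a) via the Lemma of \S\ref{LocAn} and \cite[Proposition 2.5]{Ard2012CONM}, (b) via the Corollary of \S\ref{Ggraded}, (c) via Theorem \ref{LinIndep}, and (d) by noting $U(\fr{g}_K)\subseteq D(H,K)$ for every open subgroup $H$ (which the paper simply cites from \cite[\S 2]{ST4}) and invoking (c). The extra details you supply --- checking that $KG\to D(G,K)$ respects the $G$-gradings so that Theorem \ref{LinIndep} applies, and that flatness over the field $K$ lets you pass from $D(G,K)_1$ to its subalgebra $U(\fr{g}_K)$ --- are exactly the points the paper leaves implicit.
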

\begin{proof} (a) Apply Lemma \ref{LocAn} together with \cite[Proposition 2.5]{Ard2012CONM}. 

(b) Apply part (a) together with Corollary \ref{Ggraded}.

(c) This follows from Theorem \ref{LinIndep}.

(d) It was observed in \cite[\S 2]{ST4} that $U(\fr{g}_K)$ is contained in $D(H,K)$ for every open subgroup $H$ of $G$; therefore $U(\fr{g}_K) \subseteq D(G,K)_1$. Now apply part (c).
\end{proof}
We remark that it follows from the work of Kohlhaase \cite[Proposition 1.2.8]{Kohlhaase} that the image of $U(\fr{g}_K)$ is in fact dense in $D(G,K)_1$. It is the \emph{hyper-enveloping algebra} or \emph{Arens-Michael envelope} of $U(\fr{g}_K)$ in the sense of Schmidt \cite{Schmidt3}.

\section{Affinoid enveloping algebras and Verma modules}
\label{AffEnvAlg}
\subsection{The adjoint action of $\mb{G}(R)$ on $U(\fr{g})$}
\label{AdU}
Let $\mb{G}$ be a connected, split semisimple, affine algebraic group scheme over $R$ with Lie algebra $\fr{g}$. The Lie algebra $\fr{g}$ is a $\mb{G}$-module via the adjoint action; see \cite[II.1.12(1), I.7.18]{Jantzen}. In particular, the group of $R$-points $\mb{G}(R)$ of $\mb{G}$ acts on $\fr{g}$ by Lie algebra automorphisms, and therefore by functoriality on $U(\fr{g})$ by $R$-algebra automorphisms. This action preserves the natural PBW-filtration 
\[ 0 \subset F_0 U(\fr{g}) \subset F_1 U(\fr{g}) \subset \cdots \]
on $U(\fr{g})$. Let $\Phi$ be the root system of $\mb{G}$ relative to a fixed split maximal torus $\mb{T}$, and let $x_\alpha : \mb{G}_a \to \mb{G}$ and $e_\alpha = (dx_\alpha)(1) \in \fr{g}$ be the root homomorphism and root vector corresponding to the root $\alpha \in \Phi$, respectively. 

\begin{lem} Let $r \in R$, $\alpha \in \Phi$.
\be
\item For every $\mb{G}$-module $M$, each divided power $\frac{e_\alpha^m}{m!}$ preserves $M$.
\item For all $b \in U(\fr{g})$ there exists $i \geq 1$ such that $\frac{\ad(re_\alpha)^i}{i!} \cdot b = 0$.
\item $x_\alpha(r)\cdot a = \sum_{m = 0}^\infty \frac{\ad(re_\alpha)^m}{m!}(a)$ for all $a \in U(\fr{g})$.
\ee
\end{lem}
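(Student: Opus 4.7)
The plan is to derive all three parts from analysing the $\mb{G}_a$-module structure on $M$ obtained by restricting the $\mb{G}$-action along the root homomorphism $x_\alpha : \mb{G}_a \to \mb{G}$.

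First I would encode this $\mb{G}_a$-action by its comodule map $\Delta_M : M \to M \otimes_R R[t]$ and write $\Delta_M(v) = \sum_{m \geq 0} E_m(v) \otimes t^m$; this defines a family of $R$-linear operators $E_m : M \to M$ with $E_m(v) = 0$ for $m$ sufficiently large (depending on $v$), and with $x_\alpha(r) \cdot v = \sum_m r^m E_m(v)$ for all $r \in R$ and $v \in M$. Combining coassociativity of $\Delta_M$ with the additive Hopf comultiplication $t \mapsto t \otimes 1 + 1 \otimes t$ on $R[t]$ yields
\[ E_k \circ E_\ell \;=\; \binom{k+\ell}{k}\, E_{k+\ell}, \]
and in particular $E_1^m = m! \cdot E_m$. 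Since $E_1$ is the differential at the origin of the $\mb{G}_a$-action, it coincides with the action of $e_\alpha$ coming from the derived Lie algebra representation. Inverting $m!$ inside $K$ then gives $E_m = \frac{e_\alpha^m}{m!}$ as an operator on $M_K := K \otimes_R M$, and because $E_m$ was defined over $R$ it preserves the lattice $M$; this is (a).

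For (b), I would apply (a) to $M = U(\fr{g})$ equipped with the adjoint $\mb{G}$-action, whose derivative is $\ad$. Each step $F_n U(\fr{g})$ of the PBW filtration is a finite-rank $\mb{G}$-submodule, so every $b \in U(\fr{g})$ lies in a finite-dimensional $\mb{G}_a$-submodule $V$; on such a $V$, the one-parameter subgroup $t \mapsto x_\alpha(t)$ is a polynomial in $t$, forcing it to take the form $\exp(tN)$ for a nilpotent endomorphism $N$, so that $\ad(e_\alpha)|_V$ is nilpotent. Hence $\ad(e_\alpha)^i(b) = 0$ for $i$ sufficiently large, and (a) certifies that we may divide by $i!$ while staying in $U(\fr{g})$: $\frac{\ad(r e_\alpha)^i}{i!}(b) = r^i \cdot \frac{\ad(e_\alpha)^i(b)}{i!} = 0$.

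Part (c) then falls out by reading off the $\mb{G}_a$-action formula in the case $M = U(\fr{g})$:
\[ x_\alpha(r) \cdot a \;=\; \sum_{m \geq 0} r^m E_m(a) \;=\; \sum_{m \geq 0} \frac{\ad(r e_\alpha)^m}{m!}(a), \]
the sum being finite by (b). The main obstacle I anticipate is the bookkeeping in the first step: one needs to verify carefully that the operators $E_m$ produced by the comodule structure literally coincide with the divided powers $\frac{e_\alpha^m}{m!}$ of $U(\fr{g}_K)$ acting on $M_K$ through the given $\mb{G}$-module structure, so that (a) delivers a genuine integrality statement for these divided powers rather than just a formal identity among abstract comodule operators.
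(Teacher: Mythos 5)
Your proposal is correct and follows essentially the same route as the paper: both arguments restrict the $\mb{G}$-action to $\mb{G}_a$ along $x_\alpha$ and exploit the fact that $\Dist(\mb{G}_a)$ is spanned by the divided powers of the generator of $\Lie(\mb{G}_a)$ — you simply derive this from the comodule formalism where the paper cites Jantzen I.7.3, I.7.8, I.7.11 and I.7.12. The only (immaterial) divergence is in (b), where the paper gets nilpotence of $\ad(re_\alpha)$ directly from $[\fr{g},F_jU(\fr{g})]\subseteq F_{j-1}U(\fr{g})$ rather than from finite-dimensionality of the $\mb{G}_a$-submodule; both close the argument by combining nilpotence with the integrality from (a) and the $R$-torsion-freeness of $U(\fr{g})$.
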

\begin{proof} (a) We may view $M$ as a $\mb{G}_a$-module by restriction via $x_\alpha$. Hence it is a module over the \emph{distribution algebra} $\Dist(\mb{G}_a)$ of the additive group $\mb{G}_a$, by \cite[I.7.11]{Jantzen}. It is known \cite[I.7.3, I.7.8]{Jantzen} that this distribution algebra has a basis consisting of the divided powers of the generator of $\Lie(\mb{G}_a)$.

(b) $U(\fr{g})$ is a $\mb{G}$-module so $\frac{\ad(r e_\alpha)^i}{i!}(b)$ lies in $U(\fr{g})$ by part (a). Now $[\fr{g}, F_j U(\fr{g})] \subseteq F_{j-1} U(\fr{g})$ for all $j \geq 0$, so if $b \in F_{i-1}U(\fr{g})$ for some $i \geq 1$ then $\ad(r e_\alpha)^i (b) = 0$. The result follows because $U(\fr{g})$ has no $R$-torsion.

(c) This follows from the definitions --- see \cite[I.2.8(1), I.7.12]{Jantzen}. Note that the right hand side of the equation makes sense by part (b).
\end{proof}

\subsection{Deformations and $\pi$-adic completions}
\label{Defs}
Recall \cite[\S 3.5]{AW13} that a \emph{deformable $R$-algebra} is a positively $\mathbb{Z}$-filtered $R$-algebra $A$ such that $F_0A$ is an $R$-subalgebra of $A$ and $\gr A$ is a flat $R$-module. A \emph{morphism} of deformable $R$-algebras is an $R$-linear filtered ring homomorphism. Let $A$ be a deformable $R$-algebra. Its \emph{$n$-th deformation} is the $R$-subalgebra
\[A_n := \sum_{i\geq 0} \pi^{in} F_iA \subseteq A.\]
$A_n$ becomes a deformable $R$-algebra when we equip $A_n$ with the subspace filtration arising from the given filtration on $A$, and multiplication by $\pi^{in}$ on graded pieces of degree $i$ extends to a natural isomorphism of graded $R$-algebras 
\[\gr A \stackrel{\cong}{\longrightarrow} \gr A_n\]
by \cite[Lemma 3.5]{AW13}. The assignment $A \mapsto A_n$ is functorial in $A$. $\h{A} := \invlim A / \pi^a A$ will denote the $\pi$-adic completion of $A$.  Recall \emph{almost commutative affinoid $K$-algebras} from \cite[\S 3.8]{AW13}. Such an algebra $B$ has a \emph{double associated graded ring} $\Gr(B)$; when 
\[B = \h{A_{n,K}} := \h{A_n} \otimes_RK \] 
for some deformable $R$-algebra $A$, \cite[Corollary 3.7]{AW13} tells us that $\Gr(B)$ can be computed as follows:
\[\Gr(B) = \Gr(\h{A_{n,K}}) \cong \gr A / \pi \gr A.\]

We fix the deformation parameter $n$ in what follows.

\subsection{Semisimple affinoid enveloping algebras}
\label{AdjointAction}
The enveloping algebra $\U{g}$ has associated graded ring $\gr \U{g} = \Sy{g}$ and is therefore deformable. For each $n \geq 0$, the \emph{semisimple affinoid enveloping algebra}
\[ \hugnK \]
is almost commutative affinoid, and its double associated graded ring is
\[ \Gr(\hugnK) \cong \Syk{g}.\]
By functoriality, the adjoint action of $\mb{G}(R)$ on $U(\fr{g})$ from $\S \ref{AdU}$ extends to each $\hugnK$. This action preserves the double filtration on $\hugnK$ and induces an action of $\mb{G}(R)$ on $\Gr(\hugnK)  \cong \Syk{g}$, which factors through $\mb{G}(k)$.

\begin{prop} Let $r \in R$, $\alpha \in \Phi$ and $a \in \hugnK$. Then the series 
\[ \sum_{m\geq 0} \frac{ \ad( r e_\alpha )^m }{m!} (a) \]
converges in $\hugnK$ to $x_\alpha(r) \cdot a$.
\end{prop}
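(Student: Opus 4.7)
The plan is to interpret both sides of the claimed identity as continuous $K$-linear endomorphisms of $\hugnK$ and reduce to the dense subalgebra $\U{g}$, where the identity already holds by Lemma \ref{AdU}(c).

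First I would check that $\ad(re_\alpha)$ acts on the $n$-th deformation $\U{g}_n$ with a built-in $\pi^n$-contraction. The key input is $[\fr{g}, F_j \U{g}] \subseteq F_{j-1} \U{g}$, used in the proof of Lemma \ref{AdU}(b) and reflecting the commutativity of $\gr \U{g} \cong \Sy{g}$. On the summand $\pi^{jn} F_j \U{g} \subseteq \U{g}_n$ the operator $\ad(re_\alpha)$ lands in $\pi^{jn} F_{j-1} \U{g} = \pi^n \cdot \pi^{(j-1)n} F_{j-1} \U{g} \subseteq \pi^n \U{g}_n$. Summing over $j$ gives $\ad(re_\alpha)(\U{g}_n) \subseteq \pi^n \U{g}_n$, and iterating yields $\ad(re_\alpha)^m(\U{g}_n) \subseteq \pi^{mn} \U{g}_n$ for every $m \geq 0$.

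Combining this bound with the standard estimate $v_\pi(m!) = O(m)$, one sees that $\pi^{mn}/m! \to 0$ in $K$ (for the deformation parameter $n$ fixed in the almost commutative affinoid framework of $\S\ref{Defs}$), and therefore $\ad(re_\alpha)^m(a)/m! \to 0$ in $\hugnK$ for every $a$ in the completion. The series thus converges and defines a continuous $K$-linear endomorphism $\Phi$ of $\hugnK$. Meanwhile, left multiplication by $x_\alpha(r) \in \mb{G}(R)$ extends to a continuous $K$-linear endomorphism $\Psi$ of $\hugnK$, since the adjoint $\mb{G}(R)$-action preserves the double filtration, as recalled at the start of $\S\ref{AdjointAction}$.

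By Lemma \ref{AdU}(c) one has $\Phi(a) = \Psi(a)$ for every $a \in \U{g}$, and $\U{g}$ is dense in $\hugnK$, so continuity forces $\Phi = \Psi$ everywhere, which is the assertion. The main obstacle is the quantitative convergence step in the third paragraph: one must verify that $mn$ asymptotically dominates $v_\pi(m!)$, which requires $n$ to exceed the threshold $v_\pi(p)/(p-1)$ coming from the standard $v_p(m!)$ estimate. Once that estimate is in place, the remainder is a routine continuity-and-density argument.
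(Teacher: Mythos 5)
There is a genuine gap, and it sits exactly where you locate ``the main obstacle.'' Your convergence argument rests on the contraction $\ad(re_\alpha)\bigl(\U{g}_n\bigr) \subseteq \pi^n \U{g}_n$, which you derive from the inclusion $[\fr{g}, F_j\U{g}] \subseteq F_{j-1}\U{g}$. But commutativity of $\gr \U{g} \cong \Sy{g}$ only yields $[F_1\U{g}, F_j\U{g}] \subseteq F_j\U{g}$, i.e.\ the bracket drops the degree of the product by one, not the degree of the second factor: for instance $[e_\alpha, e_{-\alpha}] = h_\alpha$ lies in $F_1\U{g}$ but not in $F_0\U{g} = R$. (The inclusion with $F_{j-1}$ does appear verbatim in the proof of Lemma \ref{AdU}(b), where it is only used qualitatively and where the conclusion is in any case guaranteed by local nilpotency of $\ad(e_\alpha)$ on each $F_j\U{g}$; your argument is the first place where the stronger quantitative form actually matters, and there it fails.) With the correct bound, $\ad(re_\alpha)$ merely preserves $\U{g}_n$ and $\hugn$ with no gain of $\pi^n$, so $\frac{1}{m!}\ad(re_\alpha)^m(a)$ has unbounded norm under your estimate and convergence does not follow. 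Even granting the contraction, you concede that comparing $\pi^{mn}$ with $v_\pi(m!)$ forces $n$ to exceed $v_\pi(p)/(p-1)$; since the proposition is needed for every fixed $n \geq 0$, including $n=0$, this is not a removable technicality but a failure of the method.

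The denominators $m!$ are controlled in the paper not by a $\pi$-adic contraction but by an integrality property of divided powers: since $U(\fr{g})_n$ is a $\mb{G}$-module, it is a $\Dist(\mb{G}_a)$-module via $x_\alpha$, and $\Dist(\mb{G}_a)$ is spanned by the divided powers of the generator of $\Lie(\mb{G}_a)$; hence each operator $\frac{\ad(re_\alpha)^m}{m!}$ preserves the lattices $\pi^N U(\fr{g})_n$ and $\pi^N\hugn$ (Lemma \ref{AdU}(a)). Given $a \in \hugn$ and $N \geq 0$, one writes $a \equiv b \bmod \pi^N\hugn$ with $b \in U(\fr{g})_n$; the divided powers of $\ad(re_\alpha)$ kill $b$ for all large $m$ by Lemma \ref{AdU}(b), so $\frac{\ad(re_\alpha)^m}{m!}(a) \in \pi^N\hugn$ for all large $m$, which gives convergence for every $n$. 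Your final step --- identifying the limit with $x_\alpha(r)\cdot a$ (the adjoint action, not left multiplication) by continuity and density of $U(\fr{g})_n$, via Lemma \ref{AdU}(c) --- is correct and coincides with the paper's.
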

\begin{proof} Without loss of generality, we may assume that $a \in \hugn$. Let $D := \ad(re_\alpha)$, viewed as a derivation of $\hugnK$ and let $N \geq 0$ be given. Then there exists $b \in U(\fr{g})_n$ such that $a \equiv b \mod \pi^N \hugn$. Now $\frac{D^i}{i!}(b) = 0$ for some $i \geq 1$ by Lemma \ref{AdU}(b). Since $U(\fr{g})_n$ is also a $\mb{G}$-module, $\frac{D^i}{i!}$ preserves $\pi^N U(\fr{g})_n$ and therefore also $\pi^N \hugn$ by Lemma \ref{AdU}(a). Thus for all $N \geq 0$ there exists $i \geq 1$ such that
\[ \frac{D^i}{i!}(a) \in \pi^N \hugn.\]
Hence $\frac{D^i}{i!}(a) \to 0$ as $i \to \infty$ inside $\hugn$, and therefore the series $\sum_{m=0}^\infty \frac{D^m}{m!}(a)$ converges to an element $e^D(a)$ of $\hugn$, say. This defines an $R$-linear endomorphism $a \mapsto e^D(a)$ of $\hugn$, which agrees with the action of $x_\alpha(r)$ on its dense subalgebra $U(\fr{g})_n$ by Lemma \ref{AdU}(c).
\end{proof}

\begin{cor}
\be
\item Every two-sided ideal $I$ of $\hugnK$ is preserved by $\mb{G}(R)$.
\item Every central element $z$ of $\hugnK$ is fixed by $\mb{G}(R)$.
\ee
\end{cor}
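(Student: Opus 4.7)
My plan is to deduce both parts of the corollary from the Proposition together with two auxiliary inputs: (i) every two-sided ideal of $\hugnK$ is closed in its Banach topology; (ii) the group $\mb{G}(R)$ is generated by its root subgroups $\{x_\alpha(r) : \alpha \in \Phi, r \in R\}$ (in the split, simply connected setting relevant for the applications of this corollary later in the paper). Granted these, both statements will follow by evaluating the series provided by the Proposition on elements of $I$ and on central elements respectively.

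For part (a), I would first argue that the set $\{g \in \mb{G}(R) : g \cdot I \subseteq I\}$ is a subgroup, so by (ii) it suffices to check that each element $g = x_\alpha(r)$ preserves $I$. Given $a \in I$, the inner derivation $\ad(re_\alpha) = [re_\alpha,-]$ of $\hugnK$ sends $I$ into $I$ since $I$ is a two-sided ideal; iterating, $\ad(re_\alpha)^m(a) \in I$ for every $m \geq 0$, so every partial sum of the series $\sum_{m\geq 0} \frac{\ad(re_\alpha)^m}{m!}(a)$ lies in $I$. By the Proposition, these partial sums converge to $x_\alpha(r) \cdot a$ in $\hugnK$, and closedness of $I$ then forces $x_\alpha(r) \cdot a \in I$.

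For part (b), I would apply the Proposition to $z \in Z(\hugnK)$ and observe that $\ad(re_\alpha)(z) = [re_\alpha, z] = 0$, so the entire series collapses to its $m=0$ term and $x_\alpha(r) \cdot z = z$. Then the generation result in (ii) propagates this fixity to all of $\mb{G}(R)$. It is worth noting that (b) does \emph{not} follow mechanically from (a) applied to principal two-sided ideals containing $z$, since that would only yield $g \cdot z \in Z(\hugnK)$ rather than equality $g \cdot z = z$; the direct collapse of the series is essential.

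The main obstacle I anticipate is ingredient (i): in principle an arbitrary two-sided ideal of a Banach algebra need not be closed. However, $\hugnK$ is almost commutative affinoid with double associated graded ring $\Syk{g}$, which is Noetherian; an Artin–Rees style argument applied to the filtration then shows that every two-sided ideal is closed. Once this is established, the rest of the proof is essentially formal: Steps 2 and 3 above are immediate consequences of the Proposition and of the standard structure theory (Steinberg/Matsumoto style generation by root subgroups) of split simply connected semisimple Chevalley groups over the complete local ring $R$.
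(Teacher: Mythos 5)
Your proof is correct and follows essentially the same route as the paper's: reduce to the root elements $x_\alpha(r)$ (the paper invokes Abe's result that $\mb{G}(R)$ is generated by these because $R$ is local), note that the partial sums of the exponential series stay in the two-sided ideal $I$ and invoke closedness of $I$, and for (b) observe that the series collapses to $z$ since $\ad(re_\alpha)(z)=0$. The one ingredient you flag as a potential obstacle, closedness of two-sided ideals, is handled in the paper by citing the standard Zariskian-filtration result (Li--Van Oystaeyen, Corollary I.5.5), which is exactly the Noetherian/Artin--Rees argument you sketch.
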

\begin{proof} Since the ring $R$ is local, the Chevalley group $\mb{G}(R)$ is generated by elements of the form $x_\alpha(r)$ (where $r \in R$ and $\alpha \in \Phi$) by a result of Abe \cite[Proposition 1.6]{Abe69}. Fix $r \in R$ and $\alpha \in \Phi$.

(a) Let $a \in I$. Since $I$ is a two-sided ideal, $\sum_{m=0}^N \frac{\ad(r e_\alpha)^m}{m!}(a) \in I$ for all $N \geq 0$. This sequence of elements converges to $x_\alpha(r)\cdot a$ by the proposition, so $x_\alpha(r)\cdot a \in I$ because $I$ is \emph{closed} by \cite[Corollary I.5.5]{LVO}.

(b) By the proposition, we have $x_\alpha(r) \cdot z = \sum_{m=0}^\infty \frac{\ad(r e_\alpha)^m}{m!}(z) = z$ because $[re_\alpha, z] = 0$ by assumption.
\end{proof}

\subsection{The centre of $\hugnK$}
\label{ZPi}
We assume from now on that $\mb{G}$ is simply-connected. The ring of invariants $\U{g}^{\mb{G}}$ of $\U{g}$ under the adjoint action of $\mb{G}$ is a deformable $R$-algebra, and it was shown in \cite[Proposition 9.3(a)]{AW13} that the completion of its $n$-th deformation $\huntGK$ is contained in the centre $Z\left(\hugnK\right)$ of $\hugnK$. We can now prove Theorem C from the Introduction.

\begin{thm} We have $Z\left(\hugnK\right) = \huntGK$.
\end{thm}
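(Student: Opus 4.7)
The reverse inclusion $Z(\hugnK) \subseteq \huntGK$ is what remains to be proved. Given $z \in Z(\hugnK)$, the Corollary of $\S\ref{AdjointAction}$ tells us that $z$ is fixed by $\mb{G}(R)$ acting on $\hugnK$, so it suffices to show that $(\hugnK)^{\mb{G}(R)} \subseteq \huntGK$. After multiplying $z$ by a suitable power of $\pi$, we may assume $z \in \hugn$, reducing the goal to proving that every $\mb{G}(R)$-invariant element of $\hugn$ lies in the closure of $U(\fr{g})^{\mb{G}}_n$.

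The key geometric input is the Zariski density of $\mb{G}(R)$ in $\mb{G}_K$. Via the Bruhat decomposition, the big cell of $\mb{G}$ is open dense and is isomorphic as an $R$-scheme to $\mathbb{A}^a \times \mathbb{G}_m^r \times \mathbb{A}^b$; since $R$ and $R^\times$ are both infinite (as $R$ is a complete discrete valuation ring of characteristic zero), the $R$-points of the big cell form a Zariski-dense subset of $\mb{G}_K$. Consequently, for any finite-dimensional $\mb{G}_K$-representation $W$ one has $W^{\mb{G}(R)} = W^{\mb{G}_K}$. Combined with the complete reducibility of $\mb{G}_K$ over the characteristic-zero field $K$, each PBW level admits a $\mb{G}_K$-invariant direct sum decomposition $F_m \U{g}_K = F_m \U{g}^{\mb{G}}_K \oplus V_{m,K}$ with $V_{m,K}^{\mb{G}(R)} = V_{m,K}^{\mb{G}_K} = 0$.

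The proof now proceeds by approximation. Writing $z = \lim z_j$ with $z_j \in F_{m_j} U(\fr{g})_n$ and $z - z_j \in \pi^j \hugn$, we obtain $(g-1) z_j \in \pi^j F_{m_j} U(\fr{g})_n$ for every $g \in \mb{G}(R)$. The lattice $F_m U(\fr{g})^{\mb{G}}_n \oplus (V_{m,K} \cap F_m U(\fr{g})_n)$ has finite index in $F_m U(\fr{g})_n$, with cokernel killed by some $\pi^{c_m}$. Projecting $\pi^{c_{m_j}} z_j$ through this decomposition produces an integral invariant $w_j \in U(\fr{g})^{\mb{G}}_n$ and a remainder $v_j \in V_{m_j,K} \cap F_{m_j} U(\fr{g})_n$ whose $\mb{G}(R)$-approximate invariance, combined with the triviality of $V_{m_j,K}^{\mb{G}(R)}$, forces $v_j$ to have $\pi$-adic order bounded below. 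Concretely, one chooses finitely many $g_1, \ldots, g_N \in \mb{G}(R)$ so that $v \mapsto ((g_i - 1) v)_i$ is injective on $V_{m_j,K}$, and compares the image lattice with its saturation inside $(V_{m_j,K} \cap F_{m_j} U(\fr{g})_n)^N$ to produce an additional integral constant $c'_{m_j}$ such that $v_j \in \pi^{j + c_{m_j} - c'_{m_j}}$ times the lattice. Rescaling yields approximants $w_j/\pi^{c_{m_j}} \in \huntGK$ that approach $z$.

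The main obstacle is ensuring genuine $\pi$-adic convergence of these approximants: the integral constants $c_m$ and $c'_m$ depend on the filtration level $m$, and since approximating $z$ to accuracy $\pi^j$ may force the truncation degree $m_j$ to grow with $j$, one must balance the growth of these constants against the $\pi$-adic decay inherent in the expansion of $z \in \hugn$ in a PBW basis. Achieving this balance is the most delicate part of the argument, and is likely to proceed either by bounding $c_m$ and $c'_m$ polynomially in $m$ via the representation theory of $\mb{G}$ on $F_m U(\fr{g})$, or by constructing a continuous Reynolds-type projection $\hugnK \to \huntGK$ directly so that $z = \rho(z)$ lies in $\huntGK$ automatically.
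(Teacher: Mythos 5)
Your reduction is sound as far as it goes: a central element is $\mb{G}(R)$-fixed by Corollary \ref{AdjointAction}(b), Zariski density of $\mb{G}(R)$ in $\mb{G}_K$ gives $W^{\mb{G}(R)} = W^{\mb{G}_K}$ for finite-dimensional representations, and complete reducibility in characteristic zero splits each $F_m\U{g}_K$ as invariants plus a complement with no invariants. But the proof then stops exactly where the real difficulty begins. Everything hinges on a bound, uniform enough in the filtration degree, for the constants $c_{m}$ and $c'_{m}$ measuring how badly the $K$-linear splitting (equivalently, the Reynolds projector onto $F_m\U{g}_K^{\mb{G}}$) fails to preserve the lattice $F_m U(\fr{g})_n$. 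You name two strategies for obtaining such a bound and carry out neither; without it the approximants $w_j/\pi^{c_{m_j}}$ need not converge to $z$ (or at all), since the truncation degree $m_j$ must grow with the required $\pi$-adic accuracy $j$. This is not a routine estimate to be filled in: controlling these denominators is essentially equivalent to the integral statement that reduction modulo $\pi$ carries the invariants $S(\fr{g})^{\mb{G}}$ onto $S(\fr{g}_k)^{\mb{G}_k}$, which is genuine (positive-characteristic) representation theory and is where hypotheses such as simple connectedness enter. So as written the argument has a real gap at its core.

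The paper avoids the convergence problem entirely by working with the double associated graded ring. One base-changes so that $k$ is algebraically closed, notes that the symbol of a central element $z$ in $\Gr(\hugnK) \cong \Syk{g}$ is fixed by $\mb{G}(k)$ and hence lies in $\Sik{g}{G}$, and then quotes \cite[Theorem 6.10]{AW13} for the fact that $\Gr(\huntGK)$ is mapped \emph{onto} $\Sik{g}{G}$ under this identification. This sandwiches $\Gr(Z(\hugnK))$ between two equal graded rings, and completeness of the filtration upgrades the graded equality to the desired equality of subalgebras. The single quoted integral fact is precisely the uniform denominator control your approach is missing; once you have it, the graded argument is both shorter and sidesteps all questions of $\pi$-adic convergence of approximants. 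I would recommend either reorganising your proof along these lines or isolating and proving the surjectivity of $\Gr(\huntGK) \to \Sik{g}{G}$ as a separate lemma, since that is the actual content.
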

\begin{proof} By base-changing to the completion of the maximal unramified extension of $K$, we may assume that the residue field $k$ of $R$ is algebraically closed. Let $z \in Z\left(\hugn\right)$. Then $z$ is fixed by the action of $\mb{G}(R)$ by Corollary \ref{AdjointAction}(b), so the symbol $\gr \bar{z}$ of the image $\bar{z}$ of $z$ in $\gr_0 \hugnK$ is fixed by the induced action of $\mb{G}(R)$ on $\Gr(\hugnK) \cong \Syk{g}$, which is just the adjoint action of $\mb{G}(k)$ on $\Syk{g}$. Since the group $\mb{G}_k$ is reduced and $k$ is algebraically closed, it follows from \cite[Remark I.2.8]{Jantzen} that $\gr \bar{z} \in \Sik{g}{G}$. Therefore
\[ \Gr(\huntGK) \subseteq \Gr(Z(\hugnK)) \subseteq \Sik{g}{G}.\]
It was shown in the proof of \cite[Theorem 6.10]{AW13} that the identification of $\Gr(\hugnK)$ with $\Syk{g}$ maps $\Gr(\huntGK)$ onto $\Sik{g}{G}$. Therefore the two inclusions displayed above are equalities and the result follows.
\end{proof}

\subsection{Affinoid Verma modules}
\label{AffVerma}

Let $\mb{T} \subset \mb{B}$ be a split maximal torus in $\mb{G}$ contained in a Borel subgroup $\mb{B}$. We will view the unipotent radical $\mb{N}$ of $\mb{B}$ as being generated by the \emph{negative} roots of the adjoint action of $\mb{T}$ on $\mb{G}$, and let $\mb{N}^+$ be the unipotent radical of the opposite Borel $\mb{B}^+$ containing $\mb{T}$. Let $\fr{t}, \fr{b}, \fr{n}$, $\fr{n}^+$ and $\fr{b}^+$ be the corresponding Lie algebras, so that we have the root space decomposition \[\fr{g} = \fr{n} \oplus \fr{t} \oplus \fr{n}^+.\]

Let $\lambda : \pi^n \fr{t} \to R$ be an $R$-linear character. View $\lambda$ as a character of $\pi^n \fr{b}^+$ by pulling back along the surjection $\pi^n \fr{b}^+ \twoheadrightarrow \pi^n \fr{t}$ with kernel $\pi^n \fr{n}^+$, and let $K_\lambda$ be the corresponding one-dimensional module over the affinoid enveloping algebra $\widehat{U(\fr{b}^+)_{n,K}}$.

\begin{defn} The \emph{affinoid Verma module} with highest weight $\lambda$ is 
\[ \widehat{V^\lambda} := \hugnK \otimes_{\widehat{U(\fr{b}^+)_{n,K}}} K_\lambda.\]
\end{defn}

We will compute the annihilator of this affinoid Verma module in $\hugnK$. To do this, we will first need to understand its \emph{characteristic variety} $\Ch(\widehat{V^\lambda})$. Recall from \cite[Definition 3.3]{AW13} that this is an algebraic subset of the prime spectrum of the polynomial algebra $\Gr(\hugnK) = \Syk{g}$.

\begin{lem} $\Ch(\widehat{V^\lambda}) = (\fr{b}^+_k)^\perp.$
\end{lem}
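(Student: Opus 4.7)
The plan is to put a good filtration on $\widehat{V^\lambda}$ whose double-associated graded module can be identified explicitly as $\Syk{n}$ viewed as a $\Syk{g}$-module through the surjection $\Syk{g}\twoheadrightarrow\Syk{g}/(\Syk{g}\cdot\fr{b}^+_k)\cong\Syk{n}$, and then to read off the characteristic variety as the vanishing locus.

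First I would equip $\widehat{V^\lambda}$ with the cyclic filtration $F_i\widehat{V^\lambda} := F_i\hugnK\cdot v_\lambda$, where $v_\lambda = 1\otimes 1_\lambda$. An affinoid PBW-type decomposition associated with the vector space splitting $\fr{g} = \fr{n}\oplus\fr{b}^+$ realises $\hugnK$ as free of rank one as a right $\widehat{U(\fr{b}^+)_{n,K}}$-module under multiplication by the ordered PBW-monomials in a basis of $\fr{n}$, in a way that respects the PBW filtration. Tensoring down via the character of $\widehat{U(\fr{b}^+)_{n,K}}$ attached to $\lambda$ then shows that $\widehat{V^\lambda}$ is free of rank one as a left $\widehat{U(\fr{n})_{n,K}}$-module on $v_\lambda$, and that $F_i\widehat{V^\lambda}$ is identified with $F_i\widehat{U(\fr{n})_{n,K}}\cdot v_\lambda$. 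Passing to the double-associated graded therefore gives $\Gr\widehat{V^\lambda}\cong\Syk{n}$ as a graded $k$-vector space, generated in degree zero by the image $\bar{v}_\lambda$ of $v_\lambda$.

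To pin down the $\Syk{g}$-action, I would observe that for $h\in\fr{t}$ we have $(\pi^n h)\cdot v_\lambda = \lambda(\pi^n h) v_\lambda \in F_0\widehat{V^\lambda}$, so the symbol of $h$ in $\fr{t}_k\subset\Gr_1\hugnK = \fr{g}_k$ annihilates $\bar{v}_\lambda\in\Gr_0\widehat{V^\lambda}$; likewise $(\pi^n e)\cdot v_\lambda = 0$ for $e\in\fr{n}^+$ shows that $\fr{n}^+_k$ annihilates $\bar{v}_\lambda$. Hence $\fr{b}^+_k$ lies in the annihilator of $\bar{v}_\lambda$ in $\Syk{g}$, and since $\Gr\widehat{V^\lambda}\cong\Syk{n}$ is free of rank one as a module over itself, the annihilator is exactly the ideal $\Syk{g}\cdot\fr{b}^+_k$. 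By \cite[Definition~3.3]{AW13}, the characteristic variety is the zero locus of this ideal in $\Spec\Syk{g}\cong\fr{g}_k^*$, which is
\[
\{ f\in\fr{g}_k^* : f|_{\fr{b}^+_k} = 0 \} = (\fr{b}^+_k)^\perp,
\]
as claimed.

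The principal obstacle is the affinoid PBW statement invoked in the second paragraph: one must check that the PBW decomposition for $U(\fr{g})$ survives the passage to the $n$-th deformation and its $\pi$-adic completion in a way that (i) makes $\hugnK$ free as a right $\widehat{U(\fr{b}^+)_{n,K}}$-module on the expected monomial basis and (ii) is strictly compatible with the PBW filtrations. Once that is available, the filtration on $\widehat{V^\lambda}$ induced by $v_\lambda$ is automatically good and the computation of $\Gr\widehat{V^\lambda}$ above goes through routinely.
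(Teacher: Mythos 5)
Your argument is correct and is essentially the paper's own proof: both endow $\widehat{V^\lambda}$ with its natural double filtration, identify $\Gr(\widehat{V^\lambda})$ with $\Syk{g}\otimes_{S(\fr{b}^+_k)}k$ (free of rank one over $\Syk{n}$), and read off the support as $(\fr{b}^+_k)^\perp$. The affinoid PBW freeness you flag as the remaining obstacle is exactly what the paper's phrase ``there is a natural double filtration such that\dots'' silently invokes, and it follows in the standard way from $\fr{g}=\fr{n}\oplus\fr{b}^+$ being a splitting into free $R$-modules compatible with the PBW filtration and $\pi$-adic completion.
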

\begin{proof} Let $\bar{\lambda}:\fr{b}^+_k \to k$ be defined by $\bar{\lambda}(\bar{x}) = \overline{\lambda(\pi^n x)}$ for any $x \in \fr{b}^+$, and let $k_{\bar{\lambda}}$ denote the corresponding $S(\fr{b}_k^+)$-module. Then there is a natural double filtration on $\widehat{V^\lambda}$ such that 
\[\Gr(\widehat{V^\lambda}) = \Syk{g} \otimes_{S(\fr{b}_k^+)} k_{\bar{\lambda}}.\]
The support of this graded module inside $\Spec \Syk{g}$ is $(\fr{b}^+_k)^\perp$ by definition.
\end{proof}

\subsection{The annihilator of an affinoid Verma module}
\label{AnnVerma}

It is well known that the centre $Z(\fr{g}_K)$ of $U(\fr{g}_K)$ acts on the classical Verma module $V^\lambda := U(\fr{g}_K) \otimes_{U(\fr{b}^+_K)} K_\lambda$ by a character $\chi_\lambda : Z(\fr{g}_K) \to K$; see \cite[Proposition 7.4.4]{Dix}. Since $V^\lambda$ is dense in $\widehat{V^\lambda}$, the action $Z(\fr{g}_K)$ on $\widehat{V^\lambda}$ also factors through $\chi_\lambda$. 

In \cite[\S 9.8]{AW13} we defined the \emph{nilpotent cone} in $\fr{g}^\ast$ to be the set of zeros $\N^\ast$ of $\mb{G}_k$-invariant polynomials	in $\Syk{g} = \O(\fr{g}^\ast_k)$ with no constant term:
\[ \N^\ast = V( \Sik{g}{G}_+ ).\]

After the next preliminary result, we will be able to compute the annihilator of $\widehat{V^\lambda}$ and thereby prove a precise affinoid analogue of \cite[Theorem 8.4.3]{Dix}.

\begin{lem} Suppose that $k$ is algebraically closed, and $p$ is very good for $\mb{G}$. Then
\be
\item the ideal $\Gr( \ker \chi_\lambda \cdot \hugnK )$ equals $\Sik{g}{G}_+ \cdot \Syk{g}$.
\item This ideal is prime.
\item If $G := \mb{G}(k)$ then $G \cdot (\fr{b}_k^+)^\perp$ is Zariski dense in $\N^\ast$.
\ee 
\end{lem}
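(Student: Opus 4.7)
The plan is to establish (c) first, then (b), and then combine both with a symbol computation for the central character to prove (a).

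For (c), I would identify $\fr{g}_k^\ast$ with $\fr{g}_k$ via the Killing form, which is nondegenerate since $p$ is very good for $\mb{G}$; under this identification $(\fr{b}_k^+)^\perp$ corresponds to the nilradical $\fr{n}_k^+$ of $\fr{b}_k^+$, and $\N^\ast$ to the usual nilpotent cone $\N\subseteq\fr{g}_k$. Then (c) reduces to the classical fact that every nilpotent element of $\fr{g}_k$ lies in the nilradical of some Borel subalgebra, giving $G\cdot\fr{n}_k^+=\N$. Part (b) is the classical theorem of Kostant--Veldkamp, valid at very good primes, that $\Sik{g}{G}_+\cdot\Syk{g}$ is precisely the defining ideal of the reduced, irreducible nilpotent cone $\N^\ast\subseteq\fr{g}_k^\ast$, and so is prime.

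For the inclusion $\supseteq$ in (a), Theorem C gives $Z(\hugnK)=\huntGK$ with double associated graded $\Sik{g}{G}$, which is a polynomial algebra at very good primes. Choose homogeneous polynomial generators $f_1,\ldots,f_r\in\Sik{g}{G}$ and lift each to a central element $\tilde f_i\in\huntGK$ of filtration degree $d_i:=\deg f_i$ with principal symbol $f_i$. Set $c_i:=\chi_\lambda(\tilde f_i)\in K$. The key integrality claim is $c_i\in R$: via the Harish-Chandra isomorphism for the $n$-th deformed enveloping algebra, $\chi_\lambda$ is evaluation at $\lambda+\rho$ on $S(\pi^n\fr{t})^W$, and the hypotheses $\lambda(\pi^n\fr{t})\subseteq R$ and $\rho(\pi^n\fr{t})\subseteq\pi^nR$ imply $(\lambda+\rho)(\pi^n\fr{t})\subseteq R$, forcing $c_i\in R$. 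Since $d_i\geq 1$ and $c_i$ has filtration degree $0$, the element $\tilde f_i-c_i\in\ker\chi_\lambda$ has symbol $f_i$ in $\Syk{g}$, yielding $\Sik{g}{G}_+\cdot\Syk{g}=(f_1,\ldots,f_r)\Syk{g}\subseteq\Gr(\ker\chi_\lambda\cdot\hugnK)$.

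For the inclusion $\subseteq$ in (a), the two-sided ideal $\ker\chi_\lambda\cdot\hugnK$ is $\mb{G}(R)$-stable by the Corollary in $\S\ref{AdjointAction}$(a), so $\Gr(\ker\chi_\lambda\cdot\hugnK)$ is a $\mb{G}(k)$-invariant ideal of $\Syk{g}$. Since $\widehat{V^\lambda}$ is a quotient of $\hugnK/(\ker\chi_\lambda\cdot\hugnK)$, the Lemma in $\S\ref{AffVerma}$ gives
\[V\bigl(\Gr(\ker\chi_\lambda\cdot\hugnK)\bigr)\supseteq\Ch(\widehat{V^\lambda})=(\fr{b}_k^+)^\perp.\]
By $\mb{G}(k)$-invariance and (c), the left side contains $\overline{G\cdot(\fr{b}_k^+)^\perp}=\N^\ast=V(\Sik{g}{G}_+\cdot\Syk{g})$, so the primality from (b) together with the Nullstellensatz forces $\Gr(\ker\chi_\lambda\cdot\hugnK)\subseteq\Sik{g}{G}_+\cdot\Syk{g}$. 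The main technical obstacle will be the integrality claim $\chi_\lambda(\tilde f_i)\in R$: one must carefully trace the $n$-th deformed Harish-Chandra homomorphism together with the $\rho$-shift; the remaining pieces assemble standard classical results on $\N^\ast$.
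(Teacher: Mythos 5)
Your proposal is essentially sound, but it takes a very different route from the paper: the paper disposes of all three parts by citation, deducing (a) directly from \cite[Theorem 6.10]{AW13} and (b), (c) from the proof of \cite[Proposition 3.4.1]{BMR1} (where it is shown that $\O(\N^\ast) \to \O(G \times^B (\fr{b}_k^+)^\perp)$ is an isomorphism, whence the action map is dominant). Your reconstruction of (b) and (c) invokes the same classical inputs (Kostant--Veldkamp in very good characteristic, every nilpotent lying in some $\fr{n}_k^+$), and your two-sided argument for (a) --- lower bound by exhibiting explicit elements $\tilde f_i - c_i$ of $\ker\chi_\lambda$ with symbols $f_i$, upper bound via $\mb{G}(k)$-invariance of $\Gr(\ker\chi_\lambda\cdot\hugnK)$, the containment of $\Ch(\widehat{V^\lambda}) = (\fr{b}_k^+)^\perp$ in its zero locus, density from (c) and primality from (b) --- is a legitimate alternative; in fact your upper-bound argument is precisely the one the paper deploys later, in Theorem \ref{AnnVerma}, to compute $\Ann(\widehat{V^\lambda})$. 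What the citation buys the paper is exactly the two things you leave unproved: that $\Gr(\huntGK)$ surjects onto $\Sik{g}{G}$ (needed to lift the $f_i$ at all) and the integrality $\chi_\lambda(\tilde f_i) \in R$ via a deformed Harish-Chandra homomorphism; together these \emph{are} the content of \cite[Theorem 6.10]{AW13}, so the ``main technical obstacle'' you flag is the whole of the paper's proof of (a) rather than a loose end.

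Two smaller corrections. First, the Killing form need not be non-degenerate at very good primes: for $\fr{sp}_4$ one has $\kappa(x,y) = 6\,\mathrm{tr}(xy)$, which vanishes identically in characteristic $3$ even though $3$ is very good for $Sp_4$; the correct statement is that at very good primes $\fr{g}_k$ carries \emph{some} non-degenerate $\mb{G}$-invariant symmetric form (e.g.\ a trace form), and that suffices for your identification $(\fr{b}_k^+)^\perp \cong \fr{n}_k^+$. Second, your lifts $\tilde f_i$ live in $\huntGK$ rather than in $Z(\fr{g}_K)$ itself, so to conclude $\tilde f_i - c_i \in \ker\chi_\lambda\cdot\hugnK$ you should either choose the $\tilde f_i$ inside the dense subalgebra $U(\fr{g}_K)^{\mb{G}} = Z(\fr{g}_K)$, or appeal to the closedness of ideals in $\hugnK$.
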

\begin{proof} Part (a) follows from \cite[Theorem 6.10]{AW13}, and part (b) follows from the proof of \cite[Proposition 3.4.1]{BMR1}. In the proof of \cite[Proposition 3.4.1]{BMR1} it was also shown that under our hypotheses, the natural action map $G \times^B (\fr{b}_k^+)^\perp \to \N^\ast$ induces an isomorphism $\O(\N^\ast) \to \O(G \times^B (\fr{b}_k^+)^\perp)$, and is therefore dominant. Part (c) follows.
\end{proof}

\begin{thm} If $p$ is very good for $\mathbf{G}$, then the annihilator of $\widehat{V^\lambda}$ inside $\widehat{U(\mathfrak{g})_{n,K}}$ is generated by $\ker \chi_\lambda$.
\end{thm}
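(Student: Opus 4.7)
Following the pattern of Theorem \ref{ZPi}, I begin by base-changing $K$ to the completion of its maximal unramified extension so that we may assume the residue field $k$ is algebraically closed. Write $J := \ker \chi_\lambda \cdot \hugnK$ and $I := \Ann_{\hugnK}(\widehat{V^\lambda})$. The observation in $\S\ref{AnnVerma}$ that $Z(\fr{g}_K)$ acts on $\widehat{V^\lambda}$ through $\chi_\lambda$ immediately gives the inclusion $J \subseteq I$, so the real content of the theorem is the reverse inclusion.

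The plan is to show that the double associated graded ideals agree, $\Gr I = \Gr J$ in $\Syk{g}$, and then lift this equality back up the filtration. First, $I$ is preserved by $\mathbf{G}(R)$ by Corollary \ref{AdjointAction}(a), so $\Gr I$ is preserved by the induced $\mathbf{G}(k)$-action on $\Syk{g} \cong \Gr(\hugnK)$, making $V(\Gr I)$ a $\mathbf{G}(k)$-stable closed subset of $\Spec \Syk{g}$. Second, since $\widehat{V^\lambda}$ is an $\hugnK / I$-module, passing to symbols shows that $\Gr I$ annihilates $\Gr(\widehat{V^\lambda})$, and therefore Lemma \ref{AffVerma} yields
\[ (\fr{b}^+_k)^\perp \;=\; \Ch(\widehat{V^\lambda}) \;\subseteq\; V(\Gr I). \]
Applying $\mathbf{G}(k)$-translates and taking Zariski closures, Lemma \ref{AnnVerma}(c) then upgrades this to $\N^\ast \subseteq V(\Gr I)$.

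By Lemma \ref{AnnVerma}(a) and (b), on the other hand, $V(\Gr J) = \N^\ast$ and $\Gr J$ is prime, so $V(\Gr I) \supseteq V(\Gr J)$ and the Nullstellensatz (available since $k$ is now algebraically closed) gives $\sqrt{\Gr I} \subseteq \sqrt{\Gr J} = \Gr J$. Combined with $\Gr J \subseteq \Gr I$ obtained from $J \subseteq I$, this forces $\Gr I = \Gr J$. A standard successive-approximation argument using the completeness and separatedness of the double filtration on $\hugnK$---subtracting off, at each stage, an element of $J$ with the same leading symbol as a given $a \in I$ and summing the resulting convergent series---then lifts this equality of graded ideals back to $I = J$.

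The main obstacle is the characteristic variety step: one must verify both that the double filtration on $\widehat{V^\lambda}$ is sufficiently good for $\Gr I$ to annihilate $\Gr(\widehat{V^\lambda})$, and that Lemma \ref{AnnVerma}(c) really applies to pass from $(\fr{b}^+_k)^\perp$ to all of $\N^\ast$. Both ingredients are already in place thanks to the framework of \cite{AW13} and the hypothesis that $p$ is very good, so the remaining combinatorics of filtrations is essentially formal.
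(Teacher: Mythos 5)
Your proposal is correct and follows essentially the same route as the paper: reduce to algebraically closed residue field, use the $\mathbf{G}(R)$-stability of the annihilator together with the density of $\mathbf{G}(k)\cdot(\mathfrak{b}_k^+)^\perp$ in $\mathcal{N}^\ast$ to identify the characteristic varieties, and then conclude from the primality of $\Gr(\ker\chi_\lambda\cdot\widehat{U(\mathfrak{g})_{n,K}})$ that the associated graded ideals, and hence the ideals themselves, coincide. The only difference is cosmetic: you phrase the argument in terms of $V(\Gr I)$ where the paper writes $\Ch(\mathcal{U}_n/I)$, and you spell out the final lifting step that the paper leaves implicit.
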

\begin{proof} Let $I_\lambda$ be the annihilator of $\widehat{V^\lambda}$ inside $\mathcal{U}_n := \hugnK$ and let $J_\lambda =  \ker \chi_\lambda \cdot \mathcal{U}_n$. Then $J_ \lambda \subseteq I_\lambda$ by the remarks made at the start of $\S \ref{AnnVerma}$. By base-changing to the completion of the maximal unramified extension of $K$, we may assume that the residue field $k$ of $R$ is algebraically closed. This allows us to identify the characteristic varieties of finitely generated $\mathcal{U}_n$-modules with their corresponding sets of $k$-points. 

Since $J_\lambda \subseteq I_\lambda$, the characteristic variety $\Ch(\mathcal{U}_n / I_\lambda)$ is contained in $\Ch\left(\mathcal{U}_n / J_\lambda \right)$, which equals $\N^\ast$ by part (a) of the lemma.

The two-sided ideal $I_\lambda$ is $\mb{G}(R)$-stable by Corollary \ref{AdjointAction}(a), so $\Ch(\mathcal{U}_n / I_\lambda)$ is stable under the adjoint action of $G:=\mb{G}(k)$ on $\fr{g}^\ast_k$. Also $I_\lambda$ annihilates $\widehat{V^\lambda}$, so $\Ch(\mathcal{U}_n / I_\lambda)$ contains $\Ch(\widehat{V^\lambda})$ which is equal to $(\fr{b}_k^+)^\perp$ by Lemma \ref{AffVerma}. Therefore 	$\Ch(\mathcal{U}_n / I_\lambda)$ contains $G \cdot (\fr{b}_k^+)^\perp$ which is Zariski dense in $\N^\ast$ by part (c) of the lemma. Since $\Ch(\mathcal{U}_n / I_\lambda)$ is closed, we deduce that $\Ch(\mathcal{U}_n / I_\lambda) = \mathcal{N}^\ast = \Ch(\mathcal{U}_n / J_\lambda)$. Hence
\[\Gr(J_\lambda) \subseteq \Gr(I_\lambda) \subseteq \sqrt{\Gr(I_\lambda)} = \sqrt{\Gr(J_\lambda)} = \Gr(J_\lambda)\]
because $\Gr(J_\lambda)$ is prime by part (b) of the lemma. The result follows.
\end{proof}

\subsection{The action of $\widehat{U(\fr{t})_{n,K}}$ on affinoid Verma modules}
\label{AdRT}
In $\S\ref{FaithfulIwasawa}$ we will need the following elementary result about the analytic density of certain infinite discrete subsets in affinoid polydiscs. 

\begin{lem} Let $A_1, A_2, \ldots, A_\ell$ be infinite subsets of $R$ and suppose that an element $f$ in the Tate algebra $K \langle x_1,\ldots, x_\ell \rangle$ vanishes on $A_1 \times A_2 \times \cdots \times A_\ell$. Then $f = 0$.
\end{lem}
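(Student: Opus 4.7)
The plan is to proceed by induction on $\ell$, the base case $\ell = 1$ being the usual statement that a non-zero element of the one-variable Tate algebra has only finitely many zeros in the closed unit ball $R$.

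For the base case, I would appeal to Weierstrass preparation in $K \langle x \rangle$: any nonzero $f \in K\langle x \rangle$ factors (after scaling) as $u \cdot p$ where $u \in K\langle x \rangle^\times$ is a unit and $p \in K[x]$ is a distinguished polynomial. The zeros of $f$ in the closed unit disc $R$ coincide with the zeros of the polynomial $p$, of which there are only finitely many. Since $A_1$ is infinite and $f$ vanishes on $A_1$, this forces $f = 0$.

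For the inductive step, assume the result in $\ell - 1$ variables and let $f \in K\langle x_1, \ldots, x_\ell \rangle$ vanish on $A_1 \times \cdots \times A_\ell$. Expanding in the last variable, write
\[ f = \sum_{n \geq 0} g_n(x_1, \ldots, x_{\ell - 1}) \, x_\ell^n \]
with $g_n \in K\langle x_1, \ldots, x_{\ell - 1} \rangle$ and $\|g_n\| \to 0$. For any fixed $(a_1, \ldots, a_{\ell-1}) \in A_1 \times \cdots \times A_{\ell-1} \subseteq R^{\ell - 1}$, evaluation is continuous and contracting, so $f(a_1, \ldots, a_{\ell - 1}, x_\ell) = \sum_{n \geq 0} g_n(a_1, \ldots, a_{\ell - 1}) \, x_\ell^n$ is a well-defined element of $K\langle x_\ell \rangle$ that vanishes on the infinite set $A_\ell$. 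By the base case, every coefficient $g_n(a_1, \ldots, a_{\ell-1})$ is zero. Thus each $g_n \in K\langle x_1, \ldots, x_{\ell - 1} \rangle$ vanishes on $A_1 \times \cdots \times A_{\ell - 1}$, and the inductive hypothesis yields $g_n = 0$ for all $n$; hence $f = 0$.

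The only subtlety is making sure the base case genuinely applies: one needs the infinite subset $A_1 \subseteq R$ to have no "escape route" through the radius of convergence, but this is automatic because $R$ is the closed unit ball of $K$ and Tate algebra elements converge on all of $R^\ell$. So really there is no serious obstacle; the main point is just to set up the iterated Tate-algebra expansion cleanly and to invoke Weierstrass preparation (or equivalently, the fact that $K\langle x \rangle$ is a principal ideal domain whose nonzero elements have finite zero sets in $R$) at the bottom of the induction.
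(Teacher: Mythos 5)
Your proof is correct, but it closes the induction differently from the paper. Both arguments induct on $\ell$ and specialize variables, but in opposite directions: the paper substitutes each $y \in A_\ell$ into the \emph{last} variable, applies the inductive hypothesis to conclude $g_y = f(x_1,\ldots,x_{\ell-1},y) = 0$, hence $(x_\ell - y) \mid f$ for every $y \in A_\ell$, and then finishes by observing that $K\langle x_1,\ldots,x_\ell\rangle$ is a Noetherian UFD in which the $x_\ell - y$ form infinitely many pairwise non-associate irreducibles, so only $f = 0$ can be divisible by all of them. You instead substitute the \emph{first} $\ell-1$ coordinates, reduce to the one-variable case via the expansion $f = \sum_n g_n x_\ell^n$ with $\|g_n\| \to 0$, kill the one-variable specialization by Weierstrass preparation (finitely many zeros in the closed unit disc), and then run the induction on the coefficients $g_n$. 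Both routes are sound; yours uses the more elementary input (one-variable Weierstrass preparation rather than unique factorization in the full Tate algebra) at the cost of the coefficient-expansion bookkeeping, while the paper's divisibility argument is shorter once the UFD property of $K\langle x_1,\ldots,x_\ell\rangle$ is granted. The paper also starts the induction at the vacuous case $\ell = 0$, whereas your base case is $\ell = 1$; either choice is fine.
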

\begin{proof} Proceed by induction on $\ell$. The case when $\ell = 0$ is vacuous so we may assume that $\ell \geq 1$. For every $y \in A_\ell$ let $g_y(x_1,\ldots,x_{\ell-1}):=f(x_1,\ldots,x_{\ell-1},y) \in K\langle x_1,\ldots, x_{\ell - 1}\rangle$. Then $g_y$ vanishes on $A_1 \times A_2 \times \cdots \times A_{\ell - 1}$ so by induction, $g_y = 0$ for all $y \in A_\ell$. Therefore $x_\ell - y$ divides $f$ for all $y \in A_\ell$. 

Now $K\langle x_1,\ldots, x_\ell \rangle$ is a Noetherian unique factorisation domain by \cite[Theorem 3.2.1]{FvdPut}, and as $y$ ranges over $A_\ell$, the $x_\ell - y$ form a collection of infinitely many distinct irreducible elements of $K\langle x_1,\ldots, x_\ell \rangle$. Therefore $f = 0$.
\end{proof}

Now consider the action of $\widehat{U(\fr{t})_{n,K}}$ on the affinoid Verma module $\widehat{V^\lambda}$ from $\S \ref{AffVerma}$. Let $v_\lambda \in \widehat{V^\lambda}$ be a highest weight vector, let $\alpha_1,\cdots, \alpha_m \in \fr{t}_K^\ast$ be the positive roots corresponding to the adjoint action of $\fr{t}$ on $\fr{n}^+$ and choose a generator $f_i \in \fr{n}$ for the $-\alpha_i$-root $R$-submodule of $\fr{n}$. Write $\mathbf{f}^\beta := f_1^{\beta_1} f_2^{\beta_2} \cdots f_m^{\beta_m} \in U(\fr{n})$ for any $\beta \in \mathbb{N}^m$. It is easy to see that
\[ h \cdot \mathbf{f}^\beta v_\lambda = (\lambda - \sum_{j=1}^m \beta_j\alpha_j)(h) \mathbf{f}^\beta v_\lambda \quad\mbox{for all}\quad h \in \fr{t}.\]
Thus $\mathbf{f}^\beta v_\lambda$ spans a one-dimensional $U(\fr{t})_K$-submodule of $U(\fr{n}_K)\cdot v_\lambda \subset \widehat{V^\lambda}$, so $K \mathbf{f}^\beta$ is actually a $\widehat{U(\fr{t})_{n,K}}$-module where $\fr{t}_K$ acts via the character $\lambda - \sum_{j=1}^m \beta_j\alpha_j \in \fr{t}^\ast_K$. In particular, we see that $U(\fr{n}_K)\cdot v_\lambda$ is a locally finite $\widehat{U(\fr{t})_{n,K}}$-module.

\begin{prop} The action of $\widehat{U(\fr{t})_{n,K}}$ on $U(\fr{n}_K)\cdot v_\lambda$ is faithful.
\end{prop}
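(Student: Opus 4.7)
My strategy is to reduce the faithfulness assertion to a statement about Tate series. The weight-space decomposition $U(\fr{n}_K)\cdot v_\lambda = \bigoplus_{\beta \in \mathbb{N}^m} K\mathbf{f}^\beta v_\lambda$ shows that the annihilator in question equals $\bigcap_{\beta \in \mathbb{N}^m} \ker \chi_\beta$, where $\chi_\beta : \widehat{U(\fr{t})_{n,K}} \to K$ is the unique continuous character extending $(\lambda - \sum_j \beta_j \alpha_j)|_{\pi^n\fr{t}}$. Fixing an $R$-basis $h_1, \ldots, h_\ell$ of $\fr{t}$ identifies $\widehat{U(\fr{t})_{n,K}}$ with the Tate algebra $K\langle x_1, \ldots, x_\ell\rangle$ via $x_s \leftrightarrow \pi^n h_s$; under this identification $\chi_\beta$ is evaluation at the point $c - \pi^n \sum_j \beta_j (\alpha_j(h_s))_s \in R^\ell$, where $c := (\lambda(\pi^n h_s))_s$.

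Since $\fr{g}$ is semisimple the roots span $\fr{t}_K^*$, so some $\ell$ of the $\alpha_j$, say $\alpha_{j_1}, \ldots, \alpha_{j_\ell}$, are $K$-linearly independent. For $\beta = \sum_{i=1}^\ell n_i e_{j_i}$ with $(n_i) \in \mathbb{N}^\ell$, the corresponding evaluation point becomes $c + Mn$, where $M \in M_\ell(R)$ has entries $M_{si} = -\pi^n \alpha_{j_i}(h_s)$ and $\det M \neq 0$ in $K$. Given $z \in \bigcap_\beta \ker \chi_\beta$, the $R$-integral substitution $x \mapsto c + My$ defines a well-defined Tate series $G(y) := z(c + My) \in K\langle y_1, \ldots, y_\ell \rangle$ that vanishes on $\mathbb{N}^\ell$, so Lemma~\ref{AdRT} forces $G = 0$.

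The main obstacle is recovering $z = 0$ from $G = 0$, since the substitution $x \mapsto c + My$ need not give an isomorphism of Tate algebras when $M \notin \mathrm{GL}_\ell(R)$. I would handle this by choosing $N \geq 0$ large enough that $\pi^N M^{-1}$ has $R$-coefficients; then the further substitution $y \mapsto \pi^N M^{-1}(x - c)$ defines a valid Tate-algebra map $K\langle y \rangle \to K\langle x \rangle$, and applying it to the identity $G = 0$ yields $z(c + \pi^N(x-c)) = 0$ in $K\langle x_1, \ldots, x_\ell \rangle$. Since $c \in R^\ell$, $z$ admits a convergent Taylor expansion $z(c+u) = \sum_\nu b_\nu u^\nu$ about $c$, and the previous identity rewrites as $\sum_\nu b_\nu \pi^{N|\nu|}(x-c)^\nu = 0$ in $K\langle x_1, \ldots, x_\ell \rangle$; reading off its Tate series coefficients then gives $b_\nu = 0$ for all $\nu$, whence $z = 0$.
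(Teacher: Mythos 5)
Your proof is correct, but it takes a genuinely different route through the key combinatorial step. The paper also reduces to the vanishing of a Tate series on a grid of points, but it engineers the grid to be a literal product set in the coordinates $\pi^n h_1,\ldots,\pi^n h_\ell$ dual to the coroots: using the identity $d\,\omega_i=\sum_j C^\ast_{ij}\alpha_j$ and the fact that the adjugate Cartan matrix $C^\ast$ has non-negative integer entries (cited from Humphreys and Lusztig--Tits), it singles out the weights $\lambda-\sum_i \mu_i d\,\omega_i$, which occur in $U(\fr{n}_K)\cdot v_\lambda$ and whose evaluation points form exactly $(d\pi^n\mathbb{N})^\ell$, so the Lemma of $\S\ref{AdRT}$ applies with no further work. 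You instead take the weights along $\ell$ linearly independent roots, which produces a grid only after the integral affine substitution $x\mapsto c+My$; since $M$ need not lie in $\mathrm{GL}_\ell(R)$ you cannot invert this substitution directly, and your rescaling trick with $\pi^N M^{-1}$ followed by the Taylor expansion about $c$ (in effect, the identity theorem for a Tate series vanishing on a sub-polydisc of positive radius) correctly closes that gap: the composite substitution is a continuous $K$-algebra endomorphism of the Tate algebra, and reading off coefficients of $\sum_\nu b_\nu\pi^{N|\nu|}(x-c)^\nu=0$ after the translation automorphism $x\mapsto x+c$ does force $b_\nu=0$ for all $\nu$. The trade-off is clear: your argument dispenses entirely with the Cartan-matrix combinatorics and needs only that the roots span $\fr{t}_K^\ast$, at the cost of an extra (standard) rigid-analytic step at the end; the paper's argument is shorter once the positivity of $C^\ast$ is granted, but imports that nontrivial fact.
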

\begin{proof} Let $\alpha_1,\ldots,\alpha_\ell$ be the simple roots, let $h_1,\ldots, h_\ell \in \fr{t}$ be the corresponding coroots and let $\omega_1,\ldots, \omega_\ell \in \fr{t}^\ast_K$ be the corresponding system of fundamental weights, so that $\omega_i(h_j) = \delta_{ij}$ for all $i,j=1,\ldots, \ell$. By \cite[\S 13.1]{Hum}, we may use the Cartan matrix $C = (\langle \alpha_i,\alpha_j \rangle)$ associated to the root system of $\fr{g}_K$ to express the simple roots in terms of the fundamental weights:
\[\alpha_j = \sum_{k=1}^\ell C_{jk} \omega_k \quad\mbox{for all}\quad j=1,\ldots,\ell.\]
Let $C^\ast$ denote the adjugate matrix of $C$ and let $d := \det C$; it then follows that
\[ d \omega_i = \sum_{j=1}^\ell C^\ast_{ij} \alpha_j \quad\mbox{for all}\quad i=1,\ldots,\ell.\]
All entries of $C^\ast$ are known to be \emph{non-negative integers}; see either \cite[\S 13.2, Table 1]{Hum} or \cite{LusTits}. Therefore for each $\mu \in \mathbb{N}^\ell$, 
\[ \sum_{i=1}^\ell \mu_i d \omega_i = \sum_{j=1}^\ell \left(\sum_{i=1}^\ell \mu_i C_{ij}^\ast \right) \alpha_j\]
is a linear combination of $\alpha_1,\ldots,\alpha_\ell$ with non-negative integer coefficients. We now observe that for any $\mu \in \mathbb{N}^d$, $\mathfrak{t}_K$ acts on the vector
\[ e_\mu := \prod_{j=1}^\ell f_j^{ \sum_{i=1}^\ell \mu_i C_{ij}^\ast } v_\lambda \in U(\fr{n}_K) \cdot v_\lambda \]
via the character
\[ \lambda - \sum_{j=1}^\ell \left(\sum_{i=1}^\ell \mu_i C_{ij}^\ast \right)\alpha_j = \lambda - \sum_{i=1}^\ell \mu_i d \omega_i.\]
Because our group $\mb{G}$ is simply connected, the coroots $h_i$ span $\fr{t}$ over $R$ and therefore we may identify the affinoid enveloping algebra $\widehat{U(\fr{t})_{n,K}}$ with the Tate algebra $K \langle \pi^n h_1, \dots, \pi^n h_\ell\rangle$. Consider the isomorphism $K\langle x_1,\ldots, x_\ell \rangle \stackrel{\cong}{\longrightarrow} \widehat{U(\fr{t})_{n,K}}$ which sends $x_i$ to $\lambda(\pi^n h_i) - \pi^n h_i$. Viewing $U(\fr{n}_K)\cdot v_\lambda$ as a $K\langle x_1,\ldots,x_\ell\rangle$-module via this isomorphism and remembering that $\omega_i(h_j) = \delta_{ij}$, we can calculate that
\[ x_j \cdot e_\mu = d \pi^n \mu_j e_\mu \quad\mbox{for all}\quad j=1,\ldots, \ell \quad\mbox{and}\quad \mu \in \mathbb{N}^d.\]
Therefore for every $f \in K\langle x_1,\ldots,x_n\rangle$ and $\mu \in \mathbb{N}^d$ we have
\[ f \cdot e_\mu = f(d \pi^n \mu_1, \ldots, d \pi^n \mu_\ell) e_\mu .\]
We may now apply the lemma with each $A_j$ being the infinite subset $d \pi^n \mathbb{N}$ of $R$ to deduce that if $f \in K \langle x_1,\ldots, x_\ell \rangle$ kills every $e_\mu$, then $f = 0$. The result follows.
\end{proof}

\subsection{The Cartan involution}
\label{Cartan}
Recall \cite[\S II.1.4]{Jantzen} that for each element $w$ of the Weyl group $\mb{W}$ of $\mb{G}$ we can find a representative $\dot{w} \in \mb{G}(R)$ of $w \in \mb{W}$ which normalises $\mb{T}(R)$. By \cite[\S II.1.4(4)]{Jantzen}, these elements permute the root subgroups of $\mb{G}$ according to the action of $\mb{W}$ on the root system $\Phi$. If $w_0 \in \mb{W}$ is the longest element, then it follows that 
\[ \dot{w_0} \cdot \fr{b}^+ = \fr{b}\quad\mbx{and}\quad \dot{w_0} \cdot \fr{b} = \fr{b}^+\]
in the adjoint action of $\mb{G}(R)$ on $\fr{g}$.

\begin{prop} $\dot{w_0} \cdot \Ann_{\widehat{U(\fr{b}^+)_{n,K}}} ( \widehat{V^\lambda} )  = \Ann_{\widehat{U(\fr{b})_{n,K}}} ( \widehat{V^\lambda} )$.
\end{prop}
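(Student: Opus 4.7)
The plan is to rewrite each annihilator as the intersection of the global annihilator $I := \Ann_{\hugnK}(\widehat{V^\lambda})$ with the appropriate Borel subalgebra, and then exploit the $\mb{G}(R)$-stability of $I$ furnished by Corollary \ref{AdjointAction}(a).

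First I would observe the trivial but crucial fact that for any $K$-subalgebra $A$ of $\hugnK$, the annihilator of the restricted $A$-module $\widehat{V^\lambda}$ is exactly $I \cap A$. Applied to $A = \widehat{U(\fr{b}^+)_{n,K}}$ and $A = \widehat{U(\fr{b})_{n,K}}$, the two sides of the proposition become $\dot{w_0} \cdot (I \cap \widehat{U(\fr{b}^+)_{n,K}})$ and $I \cap \widehat{U(\fr{b})_{n,K}}$ respectively.

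Next I would verify that the algebra automorphism $\sigma := \mathrm{Ad}(\dot{w_0})$ of $\hugnK$ restricts to an algebra isomorphism $\widehat{U(\fr{b}^+)_{n,K}} \stackrel{\cong}{\longrightarrow} \widehat{U(\fr{b})_{n,K}}$. This follows from the identity $\dot{w_0} \cdot \fr{b}^+ = \fr{b}$ stated just before the proposition together with the facts that the adjoint action of $\mb{G}(R)$ is by Lie algebra automorphisms of $\fr{g}$, extends functorially to an action on $U(\fr{g})$ by $R$-algebra automorphisms preserving the PBW filtration (cf.\ \S\ref{AdU}), hence sends the deformation $U(\fr{b}^+)_n$ isomorphically onto $U(\fr{b})_n$, and the isomorphism passes to $\pi$-adic completions and then to the $K$-base change.

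Finally, by Corollary \ref{AdjointAction}(a), the two-sided ideal $I$ is $\mb{G}(R)$-stable, so in particular $\sigma(I) = I$. Combining:
\[ \dot{w_0} \cdot \left( I \cap \widehat{U(\fr{b}^+)_{n,K}} \right) \;=\; \sigma(I) \cap \sigma\left(\widehat{U(\fr{b}^+)_{n,K}}\right) \;=\; I \cap \widehat{U(\fr{b})_{n,K}}, \]
which is the desired equality. I do not expect any serious obstacle here: the real content sits in Corollary \ref{AdjointAction}(a), which has already been proved; the only thing to be careful about is the interaction of the adjoint action with the filtration-and-completion machinery when restricting $\sigma$ to the two parabolic-like subalgebras, and that is routine.
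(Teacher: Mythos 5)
Your proof is correct and its skeleton is the same as the paper's: write both sides as $\dot{w_0}\cdot(I\cap \widehat{U(\fr{b}^+)_{n,K}})$ and $I\cap\widehat{U(\fr{b})_{n,K}}$ with $I=\Ann_{\hugnK}(\widehat{V^\lambda})$, use $\dot{w_0}\cdot\widehat{U(\fr{b}^+)_{n,K}}=\widehat{U(\fr{b})_{n,K}}$ (coming from $\dot{w_0}\cdot\fr{b}^+=\fr{b}$ and the compatibility of the adjoint action with deformation and completion), and conclude from $\dot{w_0}\cdot I=I$. The one genuine difference is the input used for the stability of $I$: the paper invokes Theorem \ref{AnnVerma} to say that $I$ is generated by $\ker\chi_\lambda$, which consists of central elements and is therefore fixed pointwise by $\mb{G}(R)$ via Corollary \ref{AdjointAction}(b); you instead quote Corollary \ref{AdjointAction}(a), which says that \emph{every} two-sided ideal of $\hugnK$ is $\mb{G}(R)$-stable (and since the group acts by automorphisms, $g\cdot I\subseteq I$ for all $g$ upgrades to $g\cdot I=I$). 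Your route is slightly more economical: it bypasses the computation of $\Ann_{\hugnK}(\widehat{V^\lambda})$ entirely and in particular does not require the hypothesis that $p$ is very good for $\mb{G}$, which Theorem \ref{AnnVerma} does; the paper's route gives the marginally sharper information that $\dot{w_0}$ fixes a generating set of $I$ elementwise, but that extra precision is not needed for the equality of intersections. Both arguments rely on the same underlying convergence mechanism from Proposition \ref{AdjointAction}, so nothing essential is gained or lost either way.
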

\begin{proof} By Theorem \ref{AnnVerma}, $I := \Ann_{\hugnK} ( \widehat{V^\lambda} )$ is generated by $\ker \chi_\lambda$ which is fixed by the adjoint action of $\mb{G}(R)$ on $\hugnK$. Therefore $\dot{w_0} \cdot I = I$, so
\[ \dot{w_0} \cdot \Ann_{\widehat{U(\fr{b}^+)_{n,K}}} ( \widehat{V^\lambda} ) = \dot{w_0} \cdot I \cap \dot{w_0} \cdot \widehat{U(\fr{b}^+)_{n,K}} = I \cap \widehat{U(\fr{b})_{n,K}} = \Ann_{\widehat{U(\fr{b})_{n,K}}} ( \widehat{V^\lambda} )\]
as required.\end{proof}

\section{Faithfulness of affinoid Verma modules over Iwasawa algebras}
\label{FaithfulIwasawa}

\subsection{$L$-uniform groups}\label{Lunif}
Throughout $\S \ref{FaithfulIwasawa}$ we will assume that $p$ is an odd prime, and that $L$ is a finite extension of $\Qp$ contained in $K$; we have the corresponding chain of inclusions of discrete valuation rings:
\[\Zp \subseteq \O_L \subseteq R.\]
Following Orlik and Strauch \cite[Remark 2.2.5(ii)]{OrlikStrauch}, we say that a uniform pro-$p$ group $G$ is \emph{$L$-uniform} if $G$ is locally $L$-analytic, and the Lie algebra $L_G$ is an $\O_L$-submodule of the $L$-Lie algebra $\mathcal{L}(G)$. The (modified) isomorphism of categories $G \mapsto \frac{1}{p} L_G$ between uniform pro-$p$ groups and finite rank torsion-free $\Zp$-Lie algebras from \cite[Theorem 9.10]{DDMS} induces a one-to-one correspondence between $L$-uniform groups and torsion-free $\O_L$-Lie algebras of finite rank.

Let $G$ be an $L$-uniform group. In $\S\ref{Lunif}$, $\S\ref{Dist1p}$ and $\S\ref{faithful}$ we will suspend the notation from $\S \ref{AffEnvAlg}$ and temporarily use the letter $\fr{g}$ to denote the 
\emph{$R$-Lie algebra associated with $G$}, defined as follows:
\[\fr{g} := R \otimes_{\O_L} \frac{1}{p} L_G.\] 
This extends \cite[Definition 10.2]{AW13} to arbitrary finite extensions $L$ of $\Qp$. Recall that the algebra $D(G,K)$ of $K$-valued locally $L$-analytic distributions from $\S \ref{LocAn}$ is a Frechet-Stein algebra by \cite[Theorem 5.1]{ST}, and therefore we have at our disposal the $K$-Banach space completions $D_r(G,K)$ for each real number $1/p \leq r < 1$.	The abbreviation 
\[\Uhat{g} := \widehat{U(\fr{g})_{0,K}}\]
will also be used throughout $\S \ref{FaithfulIwasawa}$. 

\subsection{The distribution algebra $D_{1/p}(G,K)$}\label{Dist1p}
We begin by recording the following extension of \cite[Theorem 5.1.4]{ArdThesis} and \cite[Proposition 6.10]{SchmidtSmith} to arbitrary complete discrete valuation fields $K$ of mixed characteristic and arbitrary $L$-uniform groups in the language of locally analytic distribution algebras.

\begin{lem} Let $\fr{g}$ be the $R$-Lie algebra associated with the $L$-uniform group $G$. Then there is a natural isomorphism of $K$-Banach algebras
\[ D_{1/p}(G,K) \stackrel{\cong}{\longrightarrow} \Uhat{g}.\]
\end{lem}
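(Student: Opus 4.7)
The plan is to exhibit a continuous $K$-algebra homomorphism $\Phi:\Uhat{g}\to D_{1/p}(G,K)$ extending the natural embedding $U(\fr{g}_K)\hookrightarrow D(G,K)$, and then to show $\Phi$ is bijective by identifying both sides as Gauss series in a common PBW basis. The argument is a natural extension of \cite[Theorem 5.1.4]{ArdThesis} and \cite[Proposition 6.10]{SchmidtSmith}, which treat particular cases.

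First, using the categorical equivalence recalled in $\S\ref{Lunif}$, I fix an ordered topological generating set $g_1,\ldots,g_d$ of $G$ such that $h_i:=p^{-1}\log g_i$ forms an $\O_L$-basis of $p^{-1}L_G$, hence an $R$-basis of $\fr{g}$. By Poincar\'e--Birkhoff--Witt, the ordered monomials $\mathbf{h}^\alpha=h_1^{\alpha_1}\cdots h_d^{\alpha_d}$ form a free $R$-basis of $U(\fr{g})$, so $\Uhat{g}$ is naturally identified with the $K$-Banach space of series $\sum_\alpha c_\alpha \mathbf{h}^\alpha$ ($c_\alpha\in K$, $|c_\alpha|\to 0$) under the Gauss norm $\sup_\alpha|c_\alpha|$.

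On the other side, I would appeal to the Schneider--Teitelbaum description of $D(G,K)$ adapted to the locally $L$-analytic setting (cf.\ \cite{Kohlhaase}, \cite{Schmidt3}, \cite{OrlikStrauch}): with $b_i:=g_i-1\in KG$, every element of $D(G,K)$ has a unique expansion $\sum_\alpha d_\alpha\mathbf{b}^\alpha$, and $D_{1/p}(G,K)$ is the completion under $\|\sum d_\alpha\mathbf{b}^\alpha\|_{1/p}=\sup_\alpha|d_\alpha|p^{-|\alpha|}$. Inside $D(G,K)$ the element $h_i$ coincides with $p^{-1}\log(1+b_i)=\sum_{n\geq 1}(-1)^{n+1}b_i^n/(np)$, and the elementary estimate $v_p(n)\leq n-1$ (using that $p$ is odd) gives $\|h_i\|_{1/p}=1$. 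Submultiplicativity of $\|\cdot\|_{1/p}$ then implies that every $\sum c_\alpha \mathbf{h}^\alpha\in\Uhat{g}$ converges in $D_{1/p}(G,K)$, defining the desired contractive $K$-algebra homomorphism $\Phi$.

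To show $\Phi$ is bicontinuous and bijective, I would invert the change of basis. Because $p$ is odd, $v_p(p^n/n!)\to\infty$ as $n\to\infty$, so the series $\exp(ph_i)-1=\sum_{n\geq 1}(ph_i)^n/n!$ converges $\pi$-adically in $\Uhat{g}$ to an element $\tilde b_i$ with $\Phi(\tilde b_i)=b_i$; hence every $\mathbf{b}^\alpha$ lies in the image of $\Phi$, and surjectivity follows by taking closures. The hard part, and the genuine content of the lemma, is to verify that $\{\mathbf{h}^\alpha\}$ is in fact an \emph{orthonormal} Banach basis of $D_{1/p}(G,K)$ — equivalently, that the restriction of $\|\cdot\|_{1/p}$ to $U(\fr{g}_K)$ agrees with the Gauss norm in the $\mathbf{h}$-coordinates. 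This reduces to careful $p$-adic bookkeeping of the two mutually inverse power-series substitutions $h_i\leftrightarrow b_i$, and depends crucially on $G$ being $L$-uniform so that $\{h_i\}$ is a bona fide $R$-basis of $\fr{g}$ (rather than only a $\Zp$-basis of a larger lattice). Once this orthonormality is established, $\Phi$ is simultaneously bijective and norm-preserving, yielding the claimed isomorphism of $K$-Banach algebras.
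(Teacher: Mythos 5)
There is a genuine gap, and it appears exactly where the lemma has new content. You ask for a topological generating set $g_1,\ldots,g_d$ of $G$ whose logarithms $h_i=p^{-1}\log g_i$ form an $\O_L$-basis of $\tfrac{1}{p}L_G$. When $L\neq\Qp$ these two demands are incompatible: a minimal topological generating set of the uniform pro-$p$ group $G$ has $dl$ elements, where $l=[L:\Qp]$ and $d$ is the $\O_L$-rank of $L_G$, and their logarithms form a $\Zp$-basis, not an $\O_L$-basis, of $\tfrac{1}{p}L_G$. The Schneider--Teitelbaum power-series description you appeal to (unique expansions $\sum_\alpha d_\alpha\mathbf{b}^\alpha$ with the norm $\sup_\alpha|d_\alpha|p^{-|\alpha|}$) is therefore only available in $dl$ variables, and only for the locally $\Qp$-analytic distribution algebra $D(G_0,K)$; the locally $L$-analytic algebra $D(G,K)$ is a proper quotient of $D(G_0,K)$, in which expansions in the $dl$ variables $b_i$ are no longer unique, while the $d$ variables attached to an $\O_L$-basis do not come from a generating set of $G$. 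Some mechanism for descending from $\Qp$ to $L$ is indispensable here --- the paper uses Schmidt's base-change isomorphism $D_{1/p}(G,K)\cong U(\fr{g})\otimes_{U(\fr{g}_0)}D_{1/p}(G_0,K)$ with $\fr{g}_0=\tfrac{1}{p}R\otimes_{\Zp}L_G$, followed by flatness of completion --- and your argument contains no trace of it.

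Even in the case $L=\Qp$, where your setup is consistent, you explicitly defer ``the hard part'': the orthonormality of the PBW monomials $\mathbf{h}^\alpha$ for $\|\cdot\|_{1/p}$. That assertion \emph{is} the lemma; it is precisely what \cite{ArdThesis} (Theorem 5.1.4) and \cite{AW13} (Theorem 10.4) establish, but under the hypothesis that the coefficient field is $\Qp$ or unramified over $\Qp$. Removing that hypothesis is the other half of the lemma's content, and the paper does so not by redoing the $p$-adic bookkeeping (which is what your ``careful bookkeeping'' would amount to, and which is sensitive to ramification) but by using Cohen's structure theorem to locate an unramified subfield $K'\subseteq K$ with $K/K'$ finite and then base-changing the known isomorphism along $K'\to K$. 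Your elementary norm estimates ($\|h_i\|_{1/p}=1$ and the convergence of $\exp(ph_i)-1$) are correct and give a well-defined contractive map in the $\Qp$-case, but the two steps you label as routine --- orthonormality over ramified $K$, and the passage from $\Qp$ to general $L$ --- are exactly the two reductions that constitute the paper's proof, and neither is supplied.
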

\begin{proof} Suppose first that $L = \Qp$. Let $|\cdot| : K \to \mathbb{R}$ be the norm which induces the topology on $K$, normalised by $|p| = 1/p$. Let $g_1,\ldots, g_d$ be a minimal topological generating set for $G$, let $b_i = g_i - 1 \in K[G]$ and write $|\alpha| = \alpha_1 + \cdots + \alpha_d$ for each $\alpha \in \mathbb{N}^d$. It follows from \cite[\S 4]{ST} that the distribution algebra $D_{1/p}(G,K)$ consists of all formal power series $\lambda = \sum_{\alpha \in \mathbb{N}^d} \lambda_\alpha \mathbf{b}^\alpha$ in $b_1,\ldots, b_d$ such that
\[ ||\lambda||_{1/p} := \sup\limits_{\alpha \in \mathbb{N}^d} |d_\alpha| (1/p)^{|\alpha|} \]
is finite. As a consequence of Cohen's Structure Theorem for complete local domains \cite[Corollary 28.P.2]{Matsumura}, we can find an unramified field extension $K'$ of $\Qp$ inside $K$ such that $K/K'$ is finite. Let $R'$ be the ring of integers of $K'$ and let $\fr{g}' = R' \otimes_{\Zp} \frac{1}{p}L_G$. Then we have a commutative diagram
\[\xymatrix{ K \otimes_{K'} D_{1/p}(G,K') \ar[r]\ar[d] & K \otimes_{K'} \widehat{ U(\fr{g}')_{K'} } \ar[d] \\ D_{1/p}(G,K) \ar[r] & \Uhat{g} } \]
where the vertical maps are induced by multiplication inside $D(G,K)$ and $\Uhat{g}$ and the horizontal maps are induced by the inclusion of $G$ into the groups of units of $\widehat{ U(\fr{g}')_{K'} }$ and $\Uhat{g}$, respectively.

Because $K'$ is unramified over $\Qp$, it follows from \cite[Theorem 10.4]{AW13} that the top arrow is an isomorphism. The arrow on the right is a bijection by \cite[Lemma 3.9(c)]{AW13}, and arrow on the left can be seen to be a bijection from the explicit description of elements in $D_{1/p}(G,K)$ as power series in the $b_1,\ldots,b_d$ satisfying the particular convergence condition stated above. The result follows in the case where $L = \Qp$.

Returning to the general case, let $G_0$ be the uniform pro-$p$ group $G$ viewed as a locally $\Qp$-analytic group, and let $\fr{g}_0 := \frac{1}{p}R \otimes_{\Zp} L_G$. Then there are natural surjections of algebras $U(\fr{g}_0) \twoheadrightarrow U(\fr{g})$ and $D_{1/p}(G,K) \twoheadrightarrow D_{1/p}(G_0,K)$, and it follows from \cite[Lemma 5.1]{Schmidt} that
\[ D_{1/p}(G,K) \cong U(\fr{g}) \otimes_{U(\fr{g}_0)} D_{1/p}(G_0,K).\]
Therefore by the first part of this proof we obtain
\[ D_{1/p}(G,K) \cong U(\fr{g}) \otimes_{U(\fr{g}_0)} \widehat{U(\fr{g}_0)_K}.\]
Now the algebra on the right hand side is isomorphic to $\Uhat{g}$ by \cite[\S 3.2.3(iii)]{Berth} because $U(\fr{g}_0)$ is Noetherian.
\end{proof}

\subsection{A general faithfulness result}
\label{faithful}
Let $G$ be an $L$-uniform group with associated $R$-Lie algebra $\fr{g}$. The following result will be our main engine for establishing the faithfulness of modules over the Iwasawa algebra $KG$.

\begin{thm} Let $N$ and $H$ be $L$-uniform subgroups of $G$ with associated $R$-Lie algebras $\fr{n}$ and $\fr{h}$ such that $\fr{g}=\fr{n} \oplus \fr{h}$. Let $V$ be a $\Uhat{g}$-module and suppose that there is $v\in V$ such that $v$ is a free generator of $V$ as a $\Uhat{n}$-module by restriction and $U(\fr{n}_K)v$ is a faithful, locally finite $RH$-module again by restriction. Then $V$ is a faithful $KG$-module. 
\end{thm}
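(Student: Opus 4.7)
Suppose $x \in KG$ annihilates $V$; we must show $x=0$. By the lemma in $\S\ref{Dist1p}$, the action of $KG$ on $V$ factors as $KG\hookrightarrow D(G,K)\to D_{1/p}(G,K)\cong\Uhat{g}$, so we may work entirely inside the affinoid enveloping algebra. Since $\fr{g}=\fr{n}\oplus\fr{h}$ as $R$-Lie algebras and $N,H,G$ are all $L$-uniform, the multiplication map $N\times H\to G$ is a locally $L$-analytic diffeomorphism (via the exponential-logarithm correspondence for uniform pro-$p$ groups combined with the direct sum decomposition of Lie algebras). Dualising yields a topological $K$-vector space isomorphism $KG\cong KN\,\widehat{\otimes}_K\,KH$: choosing topological generators $h_1,\dots,h_s$ of $H$ and writing $c^\beta:=\prod_j(h_j-1)^{\beta_j}\in RH$, every $x\in KG$ admits a unique convergent expansion $x=\sum_\beta x_\beta c^\beta$ with $x_\beta\in KN$.

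Testing $x$ against vectors $w=uv\in M:=U(\fr{n}_K)v$, we have $c^\beta w\in M$ by the $RH$-stability hypothesis. Using the free generator $v$ of $V$ over $\Uhat{n}$ to identify $U(\fr{n}_K)\xrightarrow{\sim} M$ via $u\mapsto uv$, the action of each $c^\beta$ on $M$ corresponds to a $K$-linear endomorphism $\tau_\beta:U(\fr{n}_K)\to U(\fr{n}_K)$, and the hypothesis $x(uv)=0$ translates (using freeness of $v$ over $\Uhat{n}$) into the family of identities
\[
\sum_\beta x_\beta\,\tau_\beta(u)=0\quad\text{in } \Uhat{n},\qquad\text{for all }u\in U(\fr{n}_K).
\]

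The crux of the argument is then to deduce $x_\beta=0$ for every $\beta$ from this family of identities. The plan is to use both hypotheses on $M$ in tandem: local finiteness of the $RH$-action reduces the analysis to finite-dimensional $RH$-submodules $M_0\subseteq M$, on which $\tau_\beta|_{M_0}$ is the matrix of $c^\beta$ in $\End_K(M_0)$; faithfulness of $RH$ on $M$ then ensures, across enough choices of $M_0$, that these matrices separate the distinct $c^\beta$'s. Combined with the integrality of $\Uhat{n}$ (to cancel common non-zero factors in the relations) and a density argument in the spirit of the lemma in $\S\ref{AdRT}$ (to handle the infinite indexing set over $\beta$), this should force each $x_\beta=0$.

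The main obstacle is precisely this last step: transmuting the \emph{scalar} faithfulness of the $RH$-action on $M$ into vanishing of the \emph{$KN$-valued} coefficients $x_\beta$ inside the non-commutative domain $\Uhat{n}$ requires careful bookkeeping of how different $RH$-isotypic pieces of $M$ register the various $x_\beta$'s, together with a sharp separation/density argument that can sweep through sufficiently many pieces at once to nail down each coefficient.
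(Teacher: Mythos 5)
Your set-up is essentially the right one and matches the paper's: decompose $\zeta\in RG$ according to the topological factorisation $G = N\cdot H$, apply it to vectors $w=uv\in M:=U(\fr{n}_K)v$, use local finiteness to keep the $H$-part inside $M$, and use freeness of $v$ over $\Uhat{n}$ to convert $\zeta\cdot w=0$ into an identity inside $\Uhat{n}$. But the step you flag as ``the main obstacle'' is a genuine gap, and the tools you propose for it are the wrong ones. The missing ingredient is Theorem~B of the paper (Corollary~\ref{LocAn}(d) combined with Lemma~\ref{Dist1p}): the multiplication map $KN\otimes_K U(\fr{n}_K)\to \Uhat{n}$ is \emph{injective}. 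This is exactly what lets one separate the $KN$-coefficients from the $U(\fr{n}_K)$-parts in the relation you obtain. Neither ``integrality of $\Uhat{n}$'' nor the density lemma of \S\ref{AdRT} (which concerns Tate algebra functions vanishing on infinite product sets, and is used only for the faithfulness of the torus action on the Verma module) can substitute for it: without Theorem~B there is no reason why a relation $\sum x_\beta u_\beta = 0$ in $\Uhat{n}$ with $x_\beta\in KN$ and $u_\beta\in U(\fr{n}_K)$ should force the individual terms to vanish, and this is the entire content of the argument.

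There is also a secondary structural issue in your arrangement of the decomposition. You write $\zeta=\sum_\beta x_\beta c^\beta$ with \emph{general} elements $x_\beta\in KN$ on the left and $H$-monomials on the right; the resulting identity $\sum_\beta x_\beta\,\tau_\beta(u)=0$ then mixes unknown coefficients $x_\beta$ with $u$-dependent elements $\tau_\beta(u)$, and even after applying Theorem~B you only get linear relations $\sum_\beta \nu^\beta_\gamma x_\beta=0$ whose solvability for $x_\beta=0$ is not clear from faithfulness for a single $w$. The paper arranges things the other way: $\zeta=\sum_\alpha \mathbf{b}^\alpha\zeta_\alpha$ with the \emph{fixed topological basis monomials} $\mathbf{b}^\alpha$ of $KN$ on the left and $\zeta_\alpha\in RH$ on the right. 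Expanding $\zeta_\alpha\cdot w$ in the PBW basis $x^\beta v$ and applying Theorem~B then yields $\sum_\alpha\mu^\alpha_\beta\mathbf{b}^\alpha=0$ in $KN$ for each $\beta$, whence all $\mu^\alpha_\beta=0$ and $\zeta_\alpha\cdot w=0$; only at this final stage does faithfulness of the $RH$-action on $M$ enter, to conclude $\zeta_\alpha=0$. You should reorganise your decomposition accordingly and cite the injectivity of $KN\otimes_K U(\fr{n}_K)\to\Uhat{n}$ explicitly.
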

\begin{proof} Let $r$ and $d$ be the ranks of $\fr{n}$ and $\fr{g}$ as $R$-modules, respectively. Choose an $R$-basis $\{x_1,\ldots,x_d\}$ for $\fr{g}$ such that $\{x_1,\ldots,x_r\}$ is an $R$-basis for $\fr{n}$. 

Let $l = [L : \Qp]$; then $G, N$ and $H$ have dimensions $dl, rl$ and $(d-r)l$ respectively when viewed as uniform pro-$p$ groups. We may choose a minimal topological generating set $\{g_1, \ldots, g_{dl}\}$ for $G$ such that $g_1,\ldots, g_{rl}$ and $g_{rl+1},\ldots,g_{dl}$ topologically generate $N$ and $H$, respectively. Write $b_i = g_i-1 \in KG$ for each $i=1,\ldots,ld$.

Suppose that $\zeta\in \Ann_{RG}(V)$. It suffices to prove that $\zeta=0$. We may write $\zeta=\sum_{\alpha\in \mathbb{N}^{dl}}\lambda_\alpha \mathbf{b}^\alpha$ with $\lambda_\alpha\in R$. Collecting terms together we can then rewrite this as $\zeta=\sum_{\alpha\in \mathbb{N}^{rl}} \mathbf{b}^\alpha \zeta_\alpha$ for some $\zeta_\alpha\in RH$. 

Now given $w \in U(\fr{n}_K)v$, $RHw\subset U(\fr{n}_K)v$ is finitely generated over $R$ by assumption. Thus there is a natural number $t$ such that we can write 
\[ \zeta_\alpha\cdot w=\sum_{\beta\in \mathbb{N}^r}\mu_{\beta}^\alpha x^{\beta}\cdot v \] 
for some $\mu_{\beta}^{\alpha}\in K$ with $\mu_{\beta}^{\alpha}=0$ for $|\beta|>t$ and all $\alpha$. Furthermore, we may assume that $\mu_{\beta}^{\alpha}$ is uniformly bounded in $\alpha$ and $\beta$.  Thus 
\[ 0=\zeta\cdot w=\sum_{\alpha\in \mathbb{N}^{rl}} \mathbf{b}^\alpha\zeta_\alpha\cdot w=\sum_{\alpha \in \mathbb{N}^{rl},\beta\in \mathbb{N}^r} \mu^\alpha_\beta \mathbf{b}^{\alpha} x^\beta\cdot v.\] 
But $\sum\limits_{\alpha \in \mathbb{N}^{rl},\beta\in \mathbb{N}^r} \mu^\alpha_\beta \mathbf{b}^{\alpha}x^\beta\in \Uhat{n}$ and $\ann_{\Uhat{n}}(v)=0$ by assumption, so in fact 
\[\sum_{\alpha \in \mathbb{N}^{rl},\beta\in \mathbb{N}^r}\mu^\alpha_\beta \mathbf{b}^{\alpha}x^\beta=0.\]
The multiplication map $KN \otimes_K U(\fr{n}_K) \to D(N,K)$ is injective by Corollary \ref{LocAn}(d). Since $D(N,K)$ contained in $D_{1/p}(N,K)$ which is isomorphic to $\Uhat{n}$ by Lemma \ref{Dist1p}, the multiplication map $KN \otimes_K U(\fr{n}_K) \to \Uhat{n}$ is also injective, and so 
\[\sum_{\beta\in \mathbb{N}^r} \sum_{\alpha\in \mathbb{N}^{rl}} \mu^\alpha_\beta\mathbf{b}^\alpha \otimes  x^{\beta} =0.\]
Therefore $\sum_{\alpha\in \mathbb{N}^{rl}}\mu^{\alpha}_\beta\mathbf{b}^\alpha=0\in KN$ for each $\beta$ because the $x^{\beta}$ are linearly independent over $K$. It follows that $\mu^\alpha_\beta=0$ for each pair $\alpha,\beta$, and hence $\zeta_{\alpha}\cdot w=0$ for each $\alpha$.  As this last equality is independent of the choice of $w\in U(\fr{n}_K)v$, we deduce that $\zeta_\alpha\in \Ann_{RH}(U(\fr{n})v)=0$ for each $\alpha$ and hence $\zeta  = \sum_{\alpha \in \mathbb{N}^{rl}} \mb{b}^\alpha \zeta_\alpha = 0$.
\end{proof}

\subsection{Congruence kernels}
\label{AffKGfaithful}
We now assume that the $L$-uniform group $G$ and the $R$-algebraic group $\mathbf{G}$ from $\S\ref{AdU}$ satisfy the following conditions:
\begin{itemize}
\item $\mb{G}$ is simply connected,
\item  the Lie algebra $\fr{g}$ of $\mb{G}$ and the Lie algebra $L_G$ of $G$ satisfy $p^n \fr{g} = \frac{1}{p}R \otimes_{\O_L} L_G$ for some integer $n \geq 0$,
\item $p$ is a very good prime for $\mb{G}$ in the sense of \cite[\S 6.8]{AW13}.
\end{itemize}
For example, for every integer $n \geq 0$, $G$ could be the congruence kernel
\[G = \ker(\mb{G}(\O_L) \to \mb{G}(\O_L/p^{n+1}\O_L)).\]
As in $\S\ref{AffVerma}$, we let $\fr{t}, \fr{b}, \fr{n}$ and $\fr{b}^+$ denote the Lie algebras of $\mb{T}$, $\mb{B}$, $\mb{N}$ and $\mb{B}^+$ of $\mb{G}$ respectively and note that because these groups are defined over $\O_L$ we can find $L$-uniform subgroups $T, B, N$ and $B^+$ whose respective associated $R$-Lie algebras are $p^n\fr{t}, p^n\fr{b}, p^n\fr{n}$ and $p^n	\fr{b}^+$.

\begin{thm} The affinoid Verma module $\widehat{V^\lambda}$ is faithful as a $KG$-module for every $R$-linear character $\lambda : p^n \fr{t} \to R$.
\end{thm}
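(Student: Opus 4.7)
The plan is to apply Theorem \ref{faithful} to $V = \widehat{V^\lambda}$ with the subgroup pair $(N, B^+)$, noting that $\fr{g} = \fr{n} \oplus \fr{b}^+$ as $R$-modules and that both summands are Lie subalgebras. The highest weight vector $v_\lambda$ is a free generator of $\widehat{V^\lambda}$ over $\widehat{U(\fr{n})_{n,K}}$ by the affinoid PBW theorem, verifying the first hypothesis. Everything then reduces to showing that $V^\lambda := U(\fr{n}_K)\cdot v_\lambda$ is a faithful, locally finite $RB^+$-module.

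Local finiteness I would verify as follows: the standard weight decomposition $V^\lambda = \bigoplus_\beta K\,\mathbf{f}^\beta v_\lambda$ into $1$-dimensional $\fr{t}$-weight spaces, combined with the local nilpotence of $\fr{n}^+$ coming from the bounded-above weight support $\lambda - \mathbb{N}\Phi^+$, shows that $U(\fr{b}^+_K)\cdot w$ is finite-dimensional for each $w \in V^\lambda$. Under the identification $KB^+ \hookrightarrow D_{1/p}(B^+,K) \cong \widehat{U(\fr{b}^+)_{n,K}}$ from Lemma \ref{Dist1p}, the continuous action of $\widehat{U(\fr{b}^+)_{n,K}}$ sends $w$ into the closure of $U(\fr{b}^+_K)\cdot w$, which is closed since it is finite-dimensional. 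Compactness of $B^+$ then bounds $RB^+\cdot w$ inside an $R$-lattice, yielding finite generation over $R$.

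The main obstacle is faithfulness. Suppose $\zeta \in RB^+$ annihilates $V^\lambda$; by density of $V^\lambda$ in $\widehat{V^\lambda}$ and continuity, $\zeta$ annihilates all of $\widehat{V^\lambda}$. Identifying $\zeta$ with an element of $\widehat{U(\fr{b}^+)_{n,K}}$ via Lemma \ref{Dist1p} places $\zeta$ in $\Ann_{\widehat{U(\fr{b}^+)_{n,K}}}(\widehat{V^\lambda})$, and Proposition \ref{Cartan} sends $\dot{w_0}\zeta$ into the opposite-Borel annihilator $\Ann_{\widehat{U(\fr{b})_{n,K}}}(\widehat{V^\lambda})$. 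It then suffices to prove that this latter annihilator vanishes.

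For this, I would exploit the affinoid PBW decomposition to write an element $\xi \in \widehat{U(\fr{b})_{n,K}}$ as $\xi = \sum_{\nu,i} a_{\nu,i}\, r_{\nu,i}$, with $\{a_{\nu,i}\}_i$ a basis of the finite-dimensional $T$-weight space of weight $-\nu$ inside $\widehat{U(\fr{n})_{n,K}}$ and $r_{\nu,i} \in \widehat{U(\fr{t})_{n,K}}$. Acting on $\mathbf{f}^\beta v_\lambda$ and separating $\fr{t}$-weight components yields $\sum_i (\lambda - \sum_j \beta_j\alpha_j)(r_{\nu,i}) \, a_{\nu,i}\mathbf{f}^\beta v_\lambda = 0$ for each $\nu$ and $\beta$. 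Freeness of $\widehat{V^\lambda}$ as a $\widehat{U(\fr{n})_{n,K}}$-module, the domain property of $\widehat{U(\fr{n})_{n,K}}$, and linear independence of the $a_{\nu,i}$ then force $(\lambda - \sum_j \beta_j \alpha_j)(r_{\nu,i}) = 0$ for all $\beta$, and the Tate algebra density lemma used in the proof of Proposition \ref{AdRT} gives $r_{\nu,i} = 0$, so $\xi = 0$. Theorem \ref{faithful} then delivers the $KG$-faithfulness of $\widehat{V^\lambda}$.
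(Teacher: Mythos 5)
Your proof is correct, and although it runs on the same three engines as the paper --- the weight/Tate-algebra computation of Proposition \ref{AdRT}, the Cartan involution of Proposition \ref{Cartan}, and the criterion of Theorem \ref{faithful} --- it assembles them differently. The paper invokes Theorem \ref{faithful} \emph{twice}: first for $p^n\fr{b}=p^n\fr{n}\oplus p^n\fr{t}$ (with Proposition \ref{AdRT} giving faithfulness over $RT$) to get faithfulness of $\widehat{V^\lambda}$ over $RB$, then Proposition \ref{Cartan} to pass to $RB^+$, then Theorem \ref{faithful} again for $p^n\fr{g}=p^n\fr{n}\oplus p^n\fr{b}^+$. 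You invoke it only once, replacing the first pass by a direct proof of the stronger statement $\Ann_{\widehat{U(\fr{b})_{n,K}}}(\widehat{V^\lambda})=0$ via the completed PBW expansion $\xi=\sum_\gamma \mathbf{f}^\gamma r_\gamma$ with $r_\gamma\in\widehat{U(\fr{t})_{n,K}}$; this is in effect an upgrade of Proposition \ref{AdRT} from $\widehat{U(\fr{t})_{n,K}}$ to $\widehat{U(\fr{b})_{n,K}}$, and it buys a cleaner use of Proposition \ref{Cartan}: you only ever conjugate affinoid annihilators, whereas the paper's transfer step implicitly also needs $\dot{w_0}$ to carry the Iwasawa subalgebra $RB^+$ onto $RB$. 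Two points to make explicit in a write-up: the characters $\lambda-\sum_j\beta_j\alpha_j$ do not evaluate to a product of infinite subsets of $R$ as $\beta$ ranges over $\mathbb{N}^m$, so you need the full argument of Proposition \ref{AdRT} (the subfamily $e_\mu$ built from the adjugate Cartan matrix), not only the density Lemma; and the scalars $(\lambda-\sum_j\beta_j\alpha_j)(r_\gamma)$ are uniformly bounded because each such character is a bounded algebra homomorphism $\widehat{U(\fr{t})_{n,K}}\to K$, which is what makes your infinite sums converge and their coefficients separable.
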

\begin{proof} Since $p^n \fr{n}$ is a complement to $p^n \fr{b}^+$ in $p^n \fr{g}$, $\widehat{V^\lambda}$ is a free $\widehat{U(p^n\fr{n})_K}$-module of rank $1$ generated by the highest weight vector $v \in \widehat{V^\lambda}$. The dense submodule $U(\fr{n}_K)\cdot v$ of $\widehat{V^\lambda}$ is locally finite as a $\fr{b}^+_K$-module; this implies that it is also a locally finite $RB^+$-submodule of $\widehat{V^\lambda}$. In particular, it is a locally finite $RT$-module.

Since $RT$ is a subring of $\widehat{U(p^n\fr{t})_K}$, Proposition \ref{AdRT} implies that the action of $RT$ on $U(\fr{n}_K)\cdot v$ is faithful. Since $p^n \fr{b} = p^n \fr{n} \oplus p^n\fr{t}$, $\widehat{V^\lambda}$ is faithful as a $RB$-module by Theorem \ref{faithful}. 

Proposition \ref{Cartan} now implies that $\widehat{V^\lambda}$ is also faithful as a $RB^+$-module, so its submodule $U(\fr{n}_K)\cdot v$ is also faithful over $RB^+$. Since $p^n \fr{g} = p^n \fr{n} \oplus p^n \fr{b}^+$, invoking Theorem \ref{faithful} again gives that $\widehat{V^\lambda}$ is faithful as a $KG$-module, as required.
\end{proof}

\subsection{Verma modules for congruence kernels}
\label{VermaCong}
For each locally $L$-analytic character $\theta\colon B^+\to 1+pR$, the contragredient of the natural $1$-dimensional representation given by $\theta$ induces a continuous $1$-dimensional $D(B^+,K)$-module $K_\theta$ via \cite[Corollary 3.4]{ST4}. We may view $K_\theta$ as a $KB^+$-module by restriction recovering the original $1$-dimensional representation. That is $b\cdot x=\theta(b)x$ for $b\in B^+$ and $x\in K_\theta$. 

By instead restricting along the inclusion $U(\fr{b}^+_K)\to D(B^+,K)$ we may view $K_\theta$ as a representation $\lambda$ of the $K$-Lie algebra $\fr{b}^+_K$. We can compute that for $b\in B^+$, $\log b\in p^{n+1}\mathfrak{b}^+$ acts by $\log \theta(b)\in pR$. Thus $\lambda$ may be viewed as an $R$-linear character $p^n\fr{b}^+\to R$. 

Conversely, each $R$-linear map $\lambda : p^n \fr{b}^+ \to R$ induces a locally $L$-analytic homomorphism $\theta : B^+ \to 1+pR$ via the rule
\[ b \mapsto \exp(\lambda(\log b)) \quad\mbox{for all} \quad b \in B^+. \]

\begin{defn} Let $\theta : B^+ \to 1 + pR$ be a locally $L$-analytic group homomorphism. The \emph{Verma module} for the Iwasawa algebra $KG$ with highest weight $\theta$ is 
\[ M^\theta = KG \otimes_{KB^+} K_\theta\]
where $K_\theta $ is the one-dimensional $KB^+$-module $K$ with $B^+$-action given by $\theta$.
\end{defn}

\begin{lem} Let $\lambda : p^n \fr{b}^+ \to R$ be the $R$-linear character  of $p^n \fr{b}^+$ corresponding to a locally $L$-analytic group homomorphism $\theta : B^+ \to 1 + pR$. Then the $KG$-submodule of $\widehat{V^\lambda}$ generated by the highest weight vector $v_\lambda$ is naturally isomorphic to $M^\theta$.
\end{lem}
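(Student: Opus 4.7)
The plan is to construct a natural surjective $KG$-module map $\phi\colon M^\theta \to KG \cdot v_\lambda \subseteq \widehat{V^\lambda}$ and then prove it is injective. To define $\phi$ via the universal property of induction, I first need $v_\lambda$ to be a $\theta$-eigenvector for $KB^+$ acting through the composite $KB^+ \to KG \to \widehat{U(\fr{g})_{n,K}}$. For each $b \in B^+$ the image in $\widehat{U(p^n\fr{b}^+)_K}$ equals $\exp(\log b)$, and since $\log b \in p^{n+1}\fr{b}^+$ acts on $v_\lambda$ by the scalar $\lambda(\log b) \in pR$, we obtain $b \cdot v_\lambda = \exp(\lambda(\log b)) v_\lambda = \theta(b) v_\lambda$. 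This furnishes a surjective $KG$-linear map
\[ \phi\colon M^\theta = KG \otimes_{KB^+} K_\theta \twoheadrightarrow KG \cdot v_\lambda, \quad g \otimes 1 \mapsto g v_\lambda. \]

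For injectivity, I would exploit the Iwasawa-type factorisation $G = N \cdot B^+$ with $N \cap B^+ = \{1\}$ arising from the Lie algebra splitting $p^n\fr{g} = p^n\fr{n} \oplus p^n\fr{b}^+$. Choosing an $R$-basis of $p^n\fr{g}$ whose first $r$ elements span $p^n\fr{n}$ and remaining elements span $p^n\fr{b}^+$, the corresponding exponentials are topological generators of $G$ for which the multiplication map $N \times B^+ \to G$ is a homeomorphism of compact $p$-adic analytic manifolds (this reflects the open big cell of $\mb{G}$). The uniqueness of the ordered Mahler expansion of elements of $KG$ relative to these generators then produces a natural bimodule isomorphism
\[ KG \cong KN \widehat{\otimes}_K KB^+ \]
(left $KN$, right $KB^+$). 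Because $K_\theta$ is one-dimensional over $K$, tensoring over $KB^+$ collapses the completed tensor to the ordinary one, yielding
\[ M^\theta \cong KN \otimes_K K_\theta \cong KN \]
as left $KN$-modules, with the class of $1 \otimes 1$ corresponding to $1 \in KN$.

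Under this identification, $\phi$ becomes the $KN$-linear map $KN \to \widehat{V^\lambda}$, $x \mapsto x v_\lambda$. This factors as $KN \hookrightarrow \widehat{U(p^n\fr{n})_K} \xrightarrow{\sim} \widehat{V^\lambda}$, where the first arrow is obtained by combining Theorem B with Lemma \ref{Dist1p} (giving $KN \hookrightarrow D(N,K) \hookrightarrow D_{1/p}(N,K) \cong \widehat{U(p^n\fr{n})_K}$), and the isomorphism records the freeness of $\widehat{V^\lambda}$ over $\widehat{U(p^n\fr{n})_K}$ on $v_\lambda$ already used in the proof of Theorem \ref{AffKGfaithful}. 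Hence $\phi$ is injective and the lemma follows. The main obstacle is justifying the tensor product decomposition $KG \cong KN \widehat{\otimes}_K KB^+$: while the group-level factorisation $G = N \cdot B^+$ is standard for congruence subgroups of simply connected semisimple groups, carefully descending the resulting homeomorphism to the level of completed group rings — in particular ensuring compatibility with the right $KB^+$-module structure — requires some care with the $p$-adic topology on the Iwasawa algebra.
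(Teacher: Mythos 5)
Your proof is correct and follows essentially the same route as the paper's: the universal property gives the $KG$-map $m_\theta \mapsto v_\lambda$, and injectivity is read off from the commutative square identifying $M^\theta$ with $KN$ and $\widehat{V^\lambda}$ with $\widehat{U(p^n\fr{n})_K}$ via the respective highest weight vectors, the top arrow being the natural inclusion $KN \hookrightarrow \widehat{U(p^n\fr{n})_K}$. The decomposition $KG \cong KN \widehat{\otimes}_K KB^+$ that you flag as the main obstacle is precisely what the paper uses, without further comment, when it asserts that $x \mapsto x m_\theta$ is a bijection from $KN$ onto $M^\theta$.
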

\begin{proof} By construction, $B^+$ acts on $v_\lambda \in \widehat{V^\lambda}$ via $\theta$. Sending the highest weight vector $m_\theta \in M^\theta$ to $v_\lambda$ therefore induces a $KG$-module homomorphism $M^\theta \longrightarrow \widehat{V^\lambda}$, which fits into the following commutative diagram:
\[ \xymatrix{ KN \ar[r]\ar[d] & \widehat{U(p^n\fr{n})_K} \ar[d] \\ M^\theta \ar[r] & \widehat{V^\lambda}. } \]
Here the vertical arrows are bijections that send $x \in KN$ to $xm_\theta$ and $y \in \widehat{U(p^n\fr{n})_K}$ to $yv_\lambda$, respectively. The top arrow is the natural inclusion of $KN$ into $\widehat{U(p^n\fr{n})_K}$, so $M^\theta \longrightarrow \widehat{V^\lambda}$ is injective. The result follows.
\end{proof}

\begin{cor} $M^\theta$ is a faithful $KG$-module.
\end{cor}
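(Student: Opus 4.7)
The plan is to show that any $\zeta \in \Ann_{KG}(M^\theta)$ acts by zero on all of $\widehat{V^\lambda}$, and then invoke Theorem \ref{AffKGfaithful}. The key tool will be the exponential map of the $L$-uniform group $G$, which converts the group-theoretic vanishing of $\zeta$ on $M^\theta$ into an infinitesimal statement about the action of $\zeta$ on $U(\fr{g}_K) v_\lambda$.

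Via the lemma I identify $M^\theta$ with the $KG$-submodule $KG \cdot v_\lambda$ of $\widehat{V^\lambda}$. For each $y \in L_G$ and each $t \in \O_L$, since $ty \in L_G$, the element $\exp(ty)$ lies in $G \subseteq KG$, which forces $\exp(ty) v_\lambda \in M^\theta$ and hence $\zeta \cdot \exp(ty) v_\lambda = 0$ in $\widehat{V^\lambda}$. Because $p$ is odd and $L_G \subseteq p^{n+1}\fr{g}$, the series $\exp(ty) = \sum_{k \ge 0}(ty)^k/k!$ converges in $\hugnK$, so that $\exp(ty) v_\lambda = \sum_k (t^k/k!)\, y^k v_\lambda$ converges in the Banach space $\widehat{V^\lambda}$. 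Continuity of left multiplication by $\zeta \in \hugnK$ then lets me interchange $\zeta$ with the sum, yielding
\[ \sum_{k \ge 0} \frac{t^k}{k!}\, \zeta y^k v_\lambda \;=\; 0 \quad \text{for all } t \in \O_L. \]
Viewing this as a Banach-valued power series in $t$ whose coefficients tend to zero, the $p$-adic identity theorem (reduced to the scalar case via continuous linear functionals and Hahn--Banach) forces every Taylor coefficient to vanish; so $\zeta y^k v_\lambda = 0$ for every $k \ge 0$, and then by $K$-linearity for every $y \in \fr{g}_K$.

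An iterative multivariate version, using products $\exp(t_1 y_1) \cdots \exp(t_s y_s) \in G$ and applying the identity theorem in each variable in turn, then yields $\zeta\, y_1^{k_1} \cdots y_s^{k_s}\, v_\lambda = 0$ for arbitrary $y_i \in \fr{g}_K$ and all exponents; by PBW this gives $\zeta \cdot u v_\lambda = 0$ for every $u \in U(\fr{g}_K)$. Since $U(\fr{g}_K)$ is dense in $\hugnK$, the subspace $U(\fr{g}_K) v_\lambda$ is dense in the cyclic $\hugnK$-module $\widehat{V^\lambda}$, so continuity of left multiplication by $\zeta$ forces $\zeta \widehat{V^\lambda} = 0$; Theorem \ref{AffKGfaithful} then gives $\zeta = 0$. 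The main technical difficulty I expect lies in the $p$-adic analyticity step: confirming convergence of the exponential series in the Banach topology of $\widehat{V^\lambda}$, justifying the interchange of $\zeta$ with the infinite sum, and verifying the Banach-valued identity theorem on $\O_L$; these are routine but require careful book-keeping with $p$-adic norms.
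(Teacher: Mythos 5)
Your proof is correct in substance, but it takes a much longer route than the paper's, which is a two-line consequence of the preceding Lemma. Both arguments have the same skeleton: exhibit a dense subspace of $\widehat{V^\lambda}$ killed by $\zeta$, conclude $\zeta\,\widehat{V^\lambda}=0$ by continuity of multiplication, and finish with Theorem \ref{AffKGfaithful}. The paper simply takes that dense subspace to be $M^\theta$ itself: the commutative diagram in the proof of the Lemma identifies $M^\theta$ with $KN\cdot v_\lambda$ inside $\widehat{V^\lambda}\cong\widehat{U(p^n\fr{n})_K}\cdot v_\lambda$, and $KN$ is dense in $\widehat{U(p^n\fr{n})_K}\cong D_{1/p}(N,K)$, so no further work is needed. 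You instead manufacture the auxiliary dense subspace $U(\fr{g}_K)v_\lambda$ and show $\zeta$ kills it by expanding $\zeta\cdot\exp(t_1y_1)\cdots\exp(t_sy_s)v_\lambda=0$ and applying a Banach-valued $p$-adic identity theorem variable by variable --- in effect re-deriving by hand a weak form of the density of $KG$ in $\hugnK$. This works (convergence of $\exp$ is fine since $L_G=p^{n+1}\fr{g}$ and $p$ is odd, and the identity theorem is available either via Hahn--Banach over the spherically complete field $K$ or, more simply, via the coordinate functionals of the orthonormal basis of $\widehat{V^\lambda}\cong\widehat{U(p^n\fr{n})_K}$, essentially Lemma \ref{AdRT}), but it buys nothing over the direct density argument. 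One small slip: ``by $K$-linearity for every $y\in\fr{g}_K$'' is not literally linearity, since $y\mapsto\zeta y^k v_\lambda$ is homogeneous of degree $k$; you should either invoke Zariski density of the lattice $L_G$ in $\fr{g}_K$, or just run the multivariate step with the $y_i$ ranging over an $\O_L$-basis of $L_G$, which is already a $K$-basis of $\fr{g}_K$, and then apply PBW.
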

\begin{proof} The commutative diagram in the proof of the lemma shows that the image of $M^\theta$ is dense in $\widehat{V^\lambda}$. Now if an element of $KG$ kills $M^\theta$, then it must annihilate all of $\widehat{V^\lambda}$ by continuity, and is therefore zero by Theorem \ref{AffKGfaithful}. \end{proof}

\subsection{Finite normal subgroups}\label{FNS}

Before we can give a proof of Theorem A, we will need to understand better the finite normal subgroups of open subgroups of $\mb{G}(\O_L)$. 

\begin{prop} Let $G$ be an open subgroup of $\mb{G}(\O_L)$ and let $F$ be a finite normal subgroup of $G$. Then $F$ is central in $\mb{G}(\O_L)$.
\end{prop}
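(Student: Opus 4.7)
The plan is to show that each $f \in F$ lies in the scheme-theoretic centre $Z(\mb{G})$ by a Zariski density argument. Since $F$ is finite and normal in $G$, the conjugation action gives a continuous homomorphism $G \to \operatorname{Aut}(F)$ to a finite group, whose kernel $H := C_G(F)$ is open in $G$, and hence open in $\mb{G}(\mathcal{O}_L)$ and in $\mb{G}(L)$ for the $p$-adic topology.

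A standard dimension argument then shows that $H$ is Zariski dense in $\mb{G}_L$: the Zariski closure $\overline{H}$ is a closed subgroup variety whose $L$-points contain an open neighbourhood of the identity in $\mb{G}(L)$, so $\overline{H}$ has full dimension $\dim \mb{G}_L$, and the connectedness of $\mb{G}_L$ forces $\overline{H} = \mb{G}_L$. For each $f \in F$ I would then consider the morphism $\psi_f : \mb{G} \to \mb{G}$, $g \mapsto gfg^{-1}f^{-1}$, and its schematic fibre $E_f := \psi_f^{-1}(e)$ over the identity; this is a closed subscheme of $\mb{G}$ (as $\mb{G}$ is affine, hence separated), whose $\mathcal{O}_L$-points contain $H$ by construction.

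Base-changing to $L$, the closed subscheme $(E_f)_L$ of $\mb{G}_L$ contains the Zariski-dense set $H$ and therefore equals $\mb{G}_L$. Since $\mb{G}$ is smooth, hence flat, over $\mathcal{O}_L$, the defining ideal of $E_f$ in $\mathcal{O}(\mb{G})$ is $\mathcal{O}_L$-torsion free and vanishes after inverting a uniformiser of $\mathcal{O}_L$, so it vanishes. Thus $E_f = \mb{G}$, which means $\psi_f$ is constantly $e$ on $\mb{G}$, so $f$ commutes with every point of $\mb{G}$. We conclude $F \subseteq Z(\mb{G})(\mathcal{O}_L)$, which is central in $\mb{G}(\mathcal{O}_L)$. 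The step I expect to require the most care is the Zariski density of $H$ in $\mb{G}_L$: although it is classical that open subgroups of $p$-adic Lie groups are Zariski dense in the ambient algebraic group, one has to be careful to compare the $p$-adic analytic dimension of $H$ with the algebraic dimension of its Zariski closure inside the connected group $\mb{G}_L$.
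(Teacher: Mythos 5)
Your argument is correct, but it takes a genuinely different route from the paper's. You centralise $F$ by the whole group via a density argument, and the key points are all sound: $C_G(F)$ is open in $G$ because conjugation gives a continuous map into the finite (hence discrete) subset $F$; an open subgroup of $\mb{G}(L)$ is Zariski dense in the connected group $\mb{G}_L$; and the commutator locus $\psi_f^{-1}(e)$ is closed, so it must contain $\overline{H} = \mb{G}_L$. You correctly flag the density step as the delicate one --- smoothness of $\mb{G}$ is what guarantees that a $p$-adically open subset cannot be contained in a proper Zariski-closed subset. Note that for the stated conclusion you may stop once $(E_f)_L = \mb{G}_L$, since this already gives $\mb{G}(\O_L) \subseteq \mb{G}(L) = E_f(L)$; the flatness argument over $\O_L$ is only needed for the stronger assertion $F \subseteq Z(\mb{G})(\O_L)$. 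The paper argues instead as follows: it first shows that $F$ centralises a torsion-free open normal subgroup $N$ of $\mb{G}(\O_L)$ (from $[N,F] \le N \cap F = 1$), then reduces to the Chevalley generators $x_\alpha(r)$ via Abe's generation theorem, and finally upgrades ``$g$ commutes with $x_\alpha(r)^t$'' to ``$g$ commutes with $x_\alpha(r)$'' using the injectivity of the $t$-th power map on unipotent elements, seen through $\log$ and $\exp$ in a faithful algebraic representation. Your approach is more geometric and somewhat more general --- it applies to any smooth connected affine group scheme over $\O_L$ and yields scheme-theoretic centrality of $F$ --- whereas the paper's is more hands-on and exploits the split semisimple structure; both are complete proofs.
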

\begin{proof} Choose a torsion-free open subgroup $N$ of $G$ which is normal in $\mb{G}(\O_L)$, for example a congruence kernel of $\mb{G}(\O_L)$. Then $[N,F] \leq N \cap F = 1$ because $N$ and $F$ are both normal in $G$ and because $N$ is torsion-free, so $F$ centralises $N$. Because $\O_L$ is local, $\mb{G}(\O_L)$ is generated as an abstract group by the elements of the form $x_\alpha(r)$ from $\S\ref{AffEnvAlg}$ for $\alpha\in \Phi$ and $r \in \O_L$ by \cite[Proposition 1.6]{Abe69}, so it will be sufficient to show that $F$ commutes with each $x_\alpha(r)$. 

Fix $g \in F$, $\alpha \in \Phi$ and $r \in \O_L$, let $x:= x_\alpha(r)$ and note that $g$ commutes with $x^t$ where $t$ is the index of $N$ in $\mb{G}(\O_L)$. Choose a faithful algebraic representation $\rho : \mb{G}(\overline{L}) \to GL_m(\overline{L})$ for some $m \geq 1$, let $u_1 = \rho(x)$ and $u_2 = \rho( gxg^{-1} )$. Then $u_1$ and $u_2$ are unipotent by \cite[Theorem 15.4(c)]{Hum3} and $u_1^t = u_2^t$ because $g$ commutes with $x^t$. Now $\log$ and $\exp$ give well-defined bijections between unipotent matrices in $GL_m(\overline{L})$ and nilpotent matrices in $M_m(\overline{L})$, so 
\[u_1 = \exp\left(\frac{1}{t}\log(u_1^t)\right) = \exp\left(\frac{1}{t}\log(u_2^t)\right) = u_2.\]
Therefore $x = gxg^{-1}$ because $\rho$ is faithful.
\end{proof}

\subsection{Proof of Theorem A}\label{PfA}
By continuity, we can find an open subgroup $H$ of $B^+$ which is mapped into $1 + pR$ by $\theta$. Choose $n$ large enough so that $G$ contains an open normal $L$-uniform subgroup $G_n$ with associated $R$-Lie algebra $p^n \fr{g}$ and such that $H$ contains an open $L$-uniform subgroup $B^+_n$ with associated $R$-Lie algebra $p^n \fr{b}^+$. Let $\theta_n$ be the restriction of $\theta$ to $B^+_n$. Then since $B^+_n=B^+\cap G_n$,
\[ M^{\theta_n} = KG_n \otimes_{KB^+_n} K_{\theta_n} \subseteq KG \otimes_{KB^+} K_\theta.\]
Writing $I := \Ann_{KG} (KG \otimes_{KB^+} K_\theta)$ and applying Corollary \ref{VermaCong} gives
\[ I \cap KG_n \subseteq \Ann_{KG_n} M^{\theta_n} = 0.\]
Let $F$ be a finite normal subgroup of $G$. Then $F$ is central in $\mb{G}(\O_L)$ by Proposition \ref{FNS}, so $F$ is also central in $G$. But $G$ has trivial centre by assumption so $F$ is trivial. Therefore $KG$ is a prime ring by \cite[Theorem A]{ArdBro2007}.

Next, $KG$ is a crossed product of $KG_n$ with the finite group $G/G_n$. By \cite[Theorem 2.1.15]{MCR}, $S := KG_n \backslash \{0\}$ is an Ore set in the Noetherian domain $KG_n$. It is stable under conjugation by $G$, so by \cite[Lemma 37.7]{Pass} it is also an Ore set in the larger ring $KG$ and there is a crossed product decomposition
\[ S^{-1} KG = (S^{-1} KG_n) \ast (G / G_n).\]
Here $S^{-1} KG_n$ is the quotient division ring of fractions of $KG_n$; thus $S^{-1} KG$ is an Artinian ring because the group $G/G_n$ is finite. Every regular element in $KG$ stays regular in $KG_n$ and therefore becomes invertible in $S^{-1}KG$ by \cite[Proposition 3.1.1]{MCR}; hence $S^{-1}KG$ is the classical Artinian ring of quotients of $KG$. 

Since $I \cap KG_n = 0$, the intersection $I \cap S$ is empty and therefore the two-sided ideal $S^{-1} I$ of $S^{-1} KG$ is proper. Now $KG$ is prime, so $S^{-1} KG$ is a prime Artinian ring and is therefore \emph{simple}. Therefore $S^{-1}I = 0$ and $I = 0$. \qed

\bibliography{references}

\def\cprime{$'$} \def\cprime{$'$}
  \def\cftil#1{\ifmmode\setbox7\hbox{$\accent"5E#1$}\else
  \setbox7\hbox{\accent"5E#1}\penalty 10000\relax\fi\raise 1\ht7
  \hbox{\lower1.15ex\hbox to 1\wd7{\hss\accent"7E\hss}}\penalty 10000
  \hskip-1\wd7\penalty 10000\box7}
\begin{thebibliography}{10}

\bibitem{Abe69}
Eiichi Abe.
\newblock Chevalley groups over local rings.
\newblock {\em T\^ohoku Math. J. (2)}, 21:474--494, 1969.

\bibitem{ArdWad2009}
K.~Ardakov and S.~J. Wadsley.
\newblock {$\Gamma$}-invariant ideals in {I}wasawa algebras.
\newblock {\em J. Pure Appl. Algebra}, 213(9):1852--1864, 2009.

\bibitem{AW13}
K.~Ardakov and S.~J. Wadsley.
\newblock On irreducible representations of compact $p$-adic analytic groups.
\newblock {\em Annals of Mathematics}, 178:453---557, 2013.

\bibitem{AWZ}
K.~Ardakov, F.~Wei, and J.~J. Zhang.
\newblock Reflexive ideals in {I}wasawa algebras.
\newblock {\em Adv. Math.}, 218(3):865--901, 2008.

\bibitem{ArdThesis}
Konstantin Ardakov.
\newblock {\em Krull dimension of {I}wasawa algebras and some related topics}.
\newblock PhD thesis, University of Cambridge, 2004.

\bibitem{ArdMPCPS}
Konstantin Ardakov.
\newblock Prime ideals in noncommutative {I}wasawa algebras.
\newblock {\em Math. Proc. Cambridge Philos. Soc.}, 141(2):197--203, 2006.

\bibitem{Ard2012CONM}
Konstantin Ardakov.
\newblock The controller subgroup of one-sided ideals in completed group rings.
\newblock In {\em New trends in noncommutative algebra}, volume 562 of {\em
  Contemp. Math.}, pages 11--26. Amer. Math. Soc., Providence, RI, 2012.

\bibitem{Ard2012}
Konstantin Ardakov.
\newblock Prime ideals in nilpotent {I}wasawa algebras.
\newblock {\em Invent. Math.}, 190(2):439--503, 2012.

\bibitem{ArdBro2007}
Konstantin Ardakov and Kenneth~A. Brown.
\newblock Primeness, semiprimeness and localisation in {I}wasawa algebras.
\newblock {\em Trans. Amer. Math. Soc.}, 359(4):1499--1515 (electronic), 2007.

\bibitem{Berth}
Pierre Berthelot.
\newblock $\mathscr{D}$-modules arithm\'etiques {I}. op\`erateurs
  diff'erentiels de niveau fini.
\newblock {\em Ann. Sci. �cole Norm. Sup. (4)}, 29(2):185--272, 1996.

\bibitem{BMR1}
R.~Bezrukanvikov, I.~Mirkovic, and D.~Rumynin.
\newblock Localization of modules for a semisimple {L}ie algebra in prime
  characteristic.
\newblock {\em Ann. of Math.}, 167(3):945--991, 2008.

\bibitem{Dix}
J.~Dixmier.
\newblock {\em Enveloping Algebras}, volume~11 of {\em Graduate Studies in
  Mathematics}.
\newblock American Mathematical Society, english edition, 1996.

\bibitem{DDMS}
J.~D. Dixon, M.~P.~F. du~Sautoy, A.~Mann, and D.~Segal.
\newblock {\em Analytic pro-{$p$} groups}, volume~61 of {\em Cambridge Studies
  in Advanced Mathematics}.
\newblock Cambridge University Press, Cambridge, second edition, 1999.

\bibitem{Duflo}
Michel Duflo.
\newblock Sur la classification des id\'eaux primitifs dans l'alg\`ebre
  enveloppante d'une alg\`ebre de {L}ie semi-simple.
\newblock {\em Ann. of Math. (2)}, 105(1):107--120, 1977.

\bibitem{FvdPut}
Jean Fresnel and Marius van~der Put.
\newblock {\em Rigid analytic geometry and its applications}, volume 218 of
  {\em Progress in Mathematics}.
\newblock Birkh\"auser Boston Inc., Boston, MA, 2004.

\bibitem{Harris}
Michael Harris.
\newblock The annihilators of {$p$}-adic induced modules.
\newblock {\em J. Algebra}, 67(1):68--71, 1980.

\bibitem{Hum}
James~E. Humphreys.
\newblock {\em Introduction to {L}ie algebras and representation theory}.
\newblock Springer-Verlag, New York, 1972.
\newblock Graduate Texts in Mathematics, Vol. 9.

\bibitem{Hum3}
James~E. Humphreys.
\newblock {\em Linear algebraic groups}.
\newblock Springer-Verlag, New York, 1975.
\newblock Graduate Texts in Mathematics, No. 21.

\bibitem{Jantzen}
Jens~Carsten Jantzen.
\newblock {\em Representations of algebraic groups}, volume 107 of {\em
  Mathematical Surveys and Monographs}.
\newblock American Mathematical Society, Providence, RI, second edition, 2003.

\bibitem{Kohlhaase}
Jan Kohlhaase.
\newblock Invariant distributions on {$p$}-adic analytic groups.
\newblock {\em Duke Math. J.}, 137(1):19--62, 2007.

\bibitem{LVO}
H.~Li and F.~Van~Oystaeyen.
\newblock {\em Zariskian filtrations}.
\newblock Kluwer Academic Publishers, 1996.

\bibitem{LusTits}
George Lusztig and Jacques Tits.
\newblock The inverse of a {C}artan matrix.
\newblock {\em An. Univ. Timi\c soara Ser. \c Stiin\c t. Mat.}, 30(1):17--23,
  1992.

\bibitem{Matsumura}
Hideyuki Matsumura.
\newblock {\em Commutative algebra}, volume~56 of {\em Mathematics Lecture Note
  Series}.
\newblock Benjamin/Cummings Publishing Co., Inc., Reading, Mass., second
  edition, 1980.

\bibitem{MCR}
J.~C. McConnell and J.~C. Robson.
\newblock {\em Noncommutative {N}oetherian rings}, volume~30 of {\em Graduate
  Studies in Mathematics}.
\newblock American Mathematical Society, Providence, RI, revised edition, 2001.
\newblock With the cooperation of L. W. Small.

\bibitem{Mont}
Susan Montgomery.
\newblock {\em Hopf algebras and their actions on rings}, volume~82 of {\em
  CBMS Regional Conference Series in Mathematics}.
\newblock Published for the Conference Board of the Mathematical Sciences,
  Washington, DC, 1993.

\bibitem{OrlikStrauch}
Sascha Orlik and Matthias Strauch.
\newblock On the irreducibility of locally analytic principal series
  representations.
\newblock {\em Represent. Theory}, 14:713--746, 2010.

\bibitem{Pass}
Donald~S. Passman.
\newblock {\em Infinite crossed products}, volume 135 of {\em Pure and Applied
  Mathematics}.
\newblock Academic Press Inc., Boston, MA, 1989.

\bibitem{Schmidt}
Tobias Schmidt.
\newblock Auslander regularity of {$p$}-adic distribution algebras.
\newblock {\em Represent. Theory}, 12:37--57, 2008.

\bibitem{Schmidt3}
Tobias Schmidt.
\newblock Stable flatness of nonarchimedean hyperenveloping algebras.
\newblock {\em J. Algebra}, 323(3):757--765, 2010.

\bibitem{SchmidtSmith}
Tobias Schmidt.
\newblock {On locally analytic Beilinson--Bernstein localization and the
  canonical dimension}.
\newblock {\em Mathematische Zeitschrift}, page 1–41, 2013.

\bibitem{ST4}
Peter Schneider and Jeremy Teitelbaum.
\newblock Locally analytic distributions and {$p$}-adic representation theory,
  with applications to {${\rm GL}_2$}.
\newblock {\em J. Amer. Math. Soc.}, 15(2):443--468 (electronic), 2002.

\bibitem{ST}
Peter Schneider and Jeremy Teitelbaum.
\newblock Algebras of {$p$}-adic distributions and admissible representations.
\newblock {\em Invent. Math.}, 153(1):145--196, 2003.

\bibitem{VenCrelle}
Otmar Venjakob.
\newblock A non-commutative {W}eierstrass preparation theorem and applications
  to {I}wasawa theory.
\newblock {\em J. Reine Angew. Math.}, 559:153--191, 2003.
\newblock With an appendix by Denis Vogel.

\bibitem{Wad2007}
Simon Wadsley.
\newblock A {B}ernstein-type inequality for localizations of {I}wasawa algebras
  of {H}eisenberg pro-{$p$} groups.
\newblock {\em Q. J. Math.}, 58(2):265--277, 2007.

\end{thebibliography}
\bibliographystyle{plain}
\end{document}